\newcounter{mnote}
\let\oldmarginpar\marginpar
\renewcommand\marginpar[1]{\-\oldmarginpar[\raggedleft\footnotesize #1]%
{\raggedright\footnotesize #1}}
\newtheorem{theorem}{Theorem}[section]
\newtheorem{lemma}[theorem]{Lemma}
\newtheorem{corollary}[theorem]{Corollary}
\newtheorem{remark}[theorem]{Remark}
\newcommand{\curl}{\operatorname{curl}}
\renewcommand{\div}{\operatorname{div}}
\newcommand{\grad}{\operatorname{grad}}
\numberwithin{equation}{section}
\begin{document}
\title[Robust Mixed Methods for Quad-Curl Perturbation Problem]{Robust Mixed Finite Element Methods for a quad-curl singular perturbation problem}
\author{Xuehai Huang}%
\address{School of Mathematics, Shanghai University of Finance and Economics, Shanghai 200433, China}%
\email{huang.xuehai@sufe.edu.cn}%
\author{Chao Zhang}%
\address{School of Mathematics, Shanghai University of Finance and Economics, Shanghai 200433, China}%
\email{zcmath@163.sufe.edu.cn}%

\thanks{The first author is the corresponding author. The first author was supported by the National Natural Science Foundation of China Project 12171300, and the Natural Science Foundation of Shanghai 21ZR1480500.}

\makeatletter
\@namedef{subjclassname@2020}{\textup{2020} Mathematics Subject Classification}
\makeatother
\subjclass[2020]{
58J10;   
65N12;   
65N22;   
65N30;   
}

\begin{abstract}
Robust mixed finite element methods are developed for a quad-curl singular perturbation problem.
Lower order $H(\grad \curl)$-nonconforming but $H(\curl)$-conforming finite elements are constructed, which are extended to nonconforming finite element Stokes complexes and the associated commutative diagrams.
Then $H(\grad \curl)$-nonconforming finite elements are employed to discretize the quad-curl singular perturbation problem, which possess the sharp and uniform error estimates with respect to the perturbation parameter.
The Nitsche’s technique is exploited to achieve the optimal convergence rate in the case of the boundary layers. Numerical results are provided to verify the theoretical convergence rates.
In addition, the regularity of the quad-curl singular perturbation problem is established.
\end{abstract}
\keywords{Quad-curl singular perturbation problem, nonconforming finite element Stokes complex, robust mixed finite element method, error analysis, Nitsche’s technique}

\maketitle


\section{Introduction}

Let $\Omega\subset\mathbb{R}^{3}$ be a bounded polyhedral domain. In this work, we shall develop robust finite element methods for the quad-curl singular perturbation problem
\begin{equation}\label{quadcurl}
\begin{aligned}
&\left\{\begin{aligned}
\varepsilon^{2}\curl^{4} \boldsymbol{u}+ \curl^{2} \boldsymbol{u}&=\boldsymbol{f} & & \text { in } \Omega, \\
\div\boldsymbol{u} &=0 & & \text { in } \Omega, \\
\boldsymbol{u} \times \boldsymbol{n}=\curl \boldsymbol{u} &=\mathbf{0} & & \text { on } \partial \Omega,
\end{aligned}\right.
\end{aligned}
\end{equation}
where $\boldsymbol{f} \in \boldsymbol{H}(\div, \Omega)$ with $\div \boldsymbol{f} = 0$ and positive perturbation parameter $\varepsilon$ has an upper bound. In particular, we are interested in the case $\varepsilon$ approaching zero, for which the boundary layer phenomenon appears.
The wellposedness of problem~\eqref{quadcurl} is closely related to the following Stokes complex in three dimensions
\begin{equation}\label{eq:StokesComplex3d}
0 \xrightarrow{\subset} H_0^{1}(\Omega) \xrightarrow{\nabla} \boldsymbol{H}_0(\grad\curl, \Omega) \xrightarrow{\curl} \boldsymbol{H}_0^{1}(\Omega; \mathbb{R}^{3}) \xrightarrow{\div} L_0^{2}(\Omega) \rightarrow 0,
\end{equation}
where $\boldsymbol{H}_0(\operatorname{grad} \curl, \Omega):=\left\{\boldsymbol{v} \in \boldsymbol{H}_0(\curl, \Omega): \curl \boldsymbol{v} \in \boldsymbol{H}_0^{1}\left( \Omega; \mathbb{R}^{3}\right)\right\}$.

To discretize the qual-curl problem, $H(\grad\curl)$-conforming finite elements are constructed in \cite{ZhangZhang2020,HuZhangZhang2020,ZhangWangZhang2019,ChenHuang2022}, while the conforming finite elements in three dimensions suffer from the large number of degrees of freedom (DoFs).
To lower the number of DoFs, some nonstandard discretizations of Stokes complex \eqref{eq:StokesComplex3d} are advanced, for example, conforming macro-element Stokes complex on Alfeld split meshes in~\cite{HuZhangZhang2022} and conforming virtual element Stokes complex in \cite{BeiraodaVeigaDassiVacca2020}. 
In addition, the nonconforming finite element Stokes complexes in \cite{Huang2020,ZhengHuXu2011} have rather lower dimenions, for example
the $H(\grad\curl)$-nonconforming finite element in \cite{Huang2020} has only $14$ DoFs.
We refer to~\cite{CaoChenHuang2022} for a decoupled finite element method for the quad-curl problem.

Although the $H(\grad \curl)$-nonconforming elements in \cite{Huang2020,ZhengHuXu2011} solve the qual-curl problem correctly and have optimal convergence rates, the convergence rates will deteriorate when they were applied to discretize the double curl problem, i.e. problem \eqref{quadcurl} with $\varepsilon=0$, since these nonconforming elements are also $H(\curl)$-nonconforming; see Section 7.9 in \cite{BoffiBrezziFortin2013}.
We numerically test the Huang element in~\cite{Huang2020} for the double curl problem in Section~\ref{sec:numericalresults}, and do not observe any convergence, similarly as the Morley element method for Poisson equation \cite[Section 3]{NilssenTaiWinther2001}.
Hence the convergence would also deteriorate if using these nonconforming finite elements to discretize the quad-curl singular perturbation problem \eqref{quadcurl} with very small $\varepsilon$.

To this end, we will construct $H(\grad\curl)$-nonconforming but $H(\curl)$-conforming finite elements for problem \eqref{quadcurl} in this paper. For tetrahedron $K$, the space of shape functions $
\boldsymbol{W}_{k}(K):=\nabla \mathbb{P}_{k+1}(K) \oplus \boldsymbol{x} \times \mathbb{P}_{1}\left(K; \mathbb{R}^{3}\right)\oplus b_{K}\mathbb{P}_{1}(K; \mathbb{R}^{3})$ for $k=1,2$ is an enirchment of  N\'ed\'elec finite element space $\boldsymbol{V}_{k}^{ND}(K)=\nabla \mathbb{P}_{k+1}(K) \oplus \boldsymbol{x} \times \mathbb{P}_{1}\left(K; \mathbb{R}^{3}\right)$ \cite{Nedelec1980,Nedelec1986} with bubble functions. The dimension of $\boldsymbol{W}_{1}(K)$ is 32.
The DoFs for space $\boldsymbol{W}_{k}(K)$ are inherited from those of N\'ed\'elec elements
\begin{align*}
\int_{e} \boldsymbol{v} \cdot \boldsymbol{t}\, q\, \mathrm{d} s & \quad \forall~q \in \mathbb{P}_{k}(e), e \in \mathcal{E}(K),\\
\int_{F}\boldsymbol{v} \times \boldsymbol{n}\cdot \boldsymbol{q}\, \mathrm{d}S &\quad \forall~\boldsymbol{q}\in \boldsymbol{RM}_{k-2}(F), F \in \mathcal{F}(K),\\
\int_{F}(\curl \boldsymbol{v}) \times \boldsymbol{n}\cdot \boldsymbol{q}\, \mathrm{d}S &\quad \forall~\boldsymbol{q}\in \boldsymbol{RT}(F), F \in \mathcal{F}(K).
\end{align*}
After constructing the global $H(\grad \curl)$-nonconforming space $\boldsymbol{W}_{h0}$, we derive the nonconforming finite element discretization of Stokes complex \eqref{eq:StokesComplex3d}
\begin{equation*}
0 \xrightarrow{\subset} V_{h}^{g} \xrightarrow{\nabla} \boldsymbol{W}_{h0} \xrightarrow{\curl} \boldsymbol{V}_{h0}^{d} \xrightarrow{\div} \mathcal{Q}_{h} \rightarrow 0,
\end{equation*}
where $V_{h}^{g}$ is the $(k+1)$th order Lagrange element space, $\boldsymbol{V}_{h0}^d$ is the $H^1$-nonconforming finite element space in \cite{TaiWinther2006}, and $\mathcal{Q}_{h}$
is the piecewise constant space. The associated commutative diagrams are also established with lower smooth interpolations.

With the $H(\grad \curl)$-nonconforming space $\boldsymbol{W}_{h0}$ and the Lagrange element space $V_{h}^{g}$, we propose a robust mixed finite element method for the quad-curl singular perturbation problem~\eqref{quadcurl}.
The discrete Poincar\'e inequality and the discrete inf-sup condition are proved.
In addition, we create the regularity of the quad-curl singular perturbation problem~\eqref{quadcurl} with the help of the regularity of the Stokes equation under the assumption domain $\Omega$ is convex.
After deriving interpolation error estimates in detail,
we achieve the optimal convergence rate $O(h)$ of the energy error for any fixed $\varepsilon$, and the uniform convergence rate $O(h^{1/2})$ of the energy error with respect to $\varepsilon$ in consideration of the boundary layers.

The uniform convergence rate $O(h^{1/2})$ is sharp but apparently not optimal.
Then we modify the previous mixed finite element method by applying the Nitsche's technique \cite{Nitsche1971,Schieweck2008,GuzmanLeykekhmanNeilan2012}. The resulting discrete method has the uniform convergence rate $O(h^{2})$ of the energy error when $\varepsilon$ approaches zero, which is optimal.
Numerical experiments are provided to confirm these uniform convergence rates.

The rest of this paper is organized as follows. We construct a nonconforming finite element Stokes complex in Section~\ref{sec:femstokescomplex}. The regularity of the quad-curl singular perturbation problem is discussed in Section~\ref{sec:regularity}. In Section~\ref{sec:mfem} we propose and analyze a robust mixed finite element method.
A modified mixed finite element method based on Nitsche's technique is devised in Section~\ref{sec:modifiedmfem}. Some numerical results are shown in Section~\ref{sec:numericalresults}.


\section{Nonconforming Finite Element Stokes Complexes}\label{sec:femstokescomplex}

In this section we will construct two $H(\grad \curl)$-nonconforming but $H(\curl)$-conforming finite elements, and present the corresponding finite element Stokes complexes.

\subsection{Notation}

Given integer $m\geq0$ and a bounded domain $D\subset \mathbb{R}^{d}$ with integer $d\geq1$,  let $H^{m}(D)$ be the space of all square-integrable functions whose distributional derivatives up to order $m$ are also square-integrable. The norm and semi-norm are denoted by $\|\cdot \|_{m, D}$ and $|\cdot |_{m, D}$, respectively. 
Set $L^2(D)=H^{0}(D)$ with inner product $(\cdot, \cdot)_D$.
When $D$ is $\Omega$, $\|\cdot \|_{m, D}$, $|\cdot |_{m, D}$ and $(\cdot, \cdot)_D$ will be abbreviated as $\|\cdot \|_{m}$, $|\cdot |_{m}$ and $(\cdot, \cdot)$.
Denote by $H^{m}_{0}(D)$ the closure of $C^{\infty}_{0}(D)$ with respect to the norm $\|\cdot \|_{m,D}$.
Let $\mathbb{P}_{k}(D)$ stand for the set of all polynomials in $D$ with the total degree no more than $k$. Denote the vector version of $H^{m}(D)$, $H_0^{m}(D)$ and $\mathbb{P}_{k}(D)$ by $\boldsymbol{H}^{m}(D; \mathbb{R}^{d})$, $\boldsymbol{H}_0^{m}(D; \mathbb{R}^{d})$ and $\mathbb{P}_{k}(D; \mathbb{R}^{d})$, respectively. 
Let $Q_D^{k}$ be the $L^{2}$-orthogonal projector onto space $\mathbb{P}_{k}(D)$ or $\mathbb{P}_{k}(D;\mathbb R^d)$. 

The gradient operator, curl operator and divergence operator are denoted by $\nabla$, $\curl$ and $\div$, respectively. 
Let $\boldsymbol{H}(\curl, D)$ be the space of all functions $\boldsymbol{v}$ in $\boldsymbol{L}^2(D;\mathbb R^3)$ satisfying $\curl\boldsymbol{v}\in\boldsymbol{L}^2(D;\mathbb R^3)$, and $\boldsymbol{H}(\div, D)$ be the space of all functions $\boldsymbol{v}$ in $\boldsymbol{L}^2(D;\mathbb R^3)$ satisfying $\div\boldsymbol{v}\in L^2(D)$. The associated spaces with the vanishing trace are denoted by $\boldsymbol{H}_{0}(\curl, D)$ and $\boldsymbol{H}_{0}(\div, D)$.
Let 
\[
\boldsymbol{H}(\grad\curl,D):=\{\boldsymbol{v}\in\boldsymbol{H}(\curl, D):\curl\boldsymbol{v}\in\boldsymbol{H}^{1}(D;\mathbb R^3)\},
\]
\[
\boldsymbol{H}_0(\operatorname{grad} \curl, D):=\left\{\boldsymbol{v} \in \boldsymbol{H}_0(\curl, D): \curl \boldsymbol{v} \in \boldsymbol{H}_0^{1}\left(D; \mathbb{R}^{3}\right)\right\},
\]
\[
\boldsymbol{H}^{1}(\curl,D):=\{\boldsymbol{v}\in\boldsymbol{H}^{1}(D;\mathbb R^3):\curl\boldsymbol{v}\in\boldsymbol{H}^{1}(D;\mathbb R^3)\},
\]
\[
\boldsymbol{H}_0^{1}(\curl,D):=\{\boldsymbol{v}\in\boldsymbol{H}_0^{1}(D;\mathbb R^3):\curl\boldsymbol{v}\in\boldsymbol{H}_0^{1}(D;\mathbb R^3)\}.
\]
Denote by $L_{0}^{2}(D)$ the subspace of $L^{2}(D)$ having zero mean value.

Let $\{\mathcal{T}_{h}\}_{h>0}$ be a regular family of tetrahedral meshes of $\Omega$. Denote by $\mathcal{F}_{h}$, $\mathcal{F}_{h}^{i}$, $\mathcal{E}_{h}$, $\mathcal{E}_{h}^i$, $\mathcal{V}_{h}$ and $\mathcal{V}_{h}^i$ the set of all faces, interior faces, edges, interior edges, vertices and interior vertices of the partition $\mathcal{T}_{h}$, respectively. For each tetrahedron $K \in \mathcal{T}_{h}$, let $\boldsymbol{n}_{K}$ be the unit outward normal vector to $\partial K$, which will be abbreviated as $\boldsymbol{n}$ if not causing any confusion. We fix a unit normal vector $\boldsymbol{n}_{F}$ for each face $F \in \mathcal{F}_{h}$, and a unit tangential vector $\boldsymbol{t}_e$ for each edge $e \in \mathcal{E}_{h}$. We also abbreviate $\boldsymbol{n}_{F}$ and $\boldsymbol{t}_e$ as $\boldsymbol{n}$ and $\boldsymbol{t}$ accordingly when not causing any confusion.
For $K \in \mathcal{T}_{h}$, denote by $\mathcal{F}(K), \mathcal{E}(K)$ and $\mathcal{V}(K)$ the set of all faces, edges and vertices of $K$, respectively. 
For $F \in \mathcal{F}_{h}$, let $\mathcal{E}(F)$ be the set of all edges of $F$. And for $e \in \mathcal{E}(F)$, denote by $\boldsymbol{n}_{F,e}$  the unit vector being parallel to $F$ and outward normal to $\partial F$. Set $\boldsymbol t_{F,e}:=\boldsymbol{n}_{F}\times \boldsymbol{n}_{F,e}$, where $\times$ is the exterior product. For elementwise smooth function $\boldsymbol{v}$, define
\[
\|\boldsymbol{v}_{h}\|_{1,h}^{2}:= \sum_{K \in \mathcal{T}_{h}}\|\boldsymbol{v}_{h}\|_{1,K}^{2},\quad |\boldsymbol{v}_{h}|_{1,h}^{2}:= \sum_{K \in \mathcal{T}_{h}}|\boldsymbol{v}_{h}|_{1,K}^{2}.
\]
Let $\nabla_{h}$ be the elementwise counterpart of $\nabla$ with respect to $\mathcal{T}_{h}$.

\subsection{$H(\grad\curl)$-nonconforming finite elements}
We focus on constructing $H(\grad\curl)$-nonconforming but $H(\curl)$-conforming finite elements in this subsection. 

For tetrahedron $K$, let 
\[
\boldsymbol{V}_{k}^{ND}(K):=\nabla \mathbb{P}_{k+1}(K) \oplus \boldsymbol{x} \times \mathbb{P}_{1}\left(K ; \mathbb{R}^{3}\right) \quad \text { for } k=1,2.
\]
Then $\boldsymbol{V}_{1}^{ND}(K)$ is the first order N\'ed\'elec finite element space of the first kind~\cite{Nedelec1980}, and $\boldsymbol{V}_{2}^{ND}(K)$ is the second order N\'ed\'elec finite element space of the second kind~\cite{Nedelec1986}.
Now enirch $\boldsymbol{V}_{k}^{ND}(K)$ with bubble functions to define the space of shape functions for $H(\grad\curl)$-nonconforming finite elements as
\[
\boldsymbol{W}_{k}(K):=\boldsymbol{V}_{k}^{ND}(K)\oplus b_{K}\mathbb{P}_{1}(K;\mathbb{R}^{3}) \quad \text { for } k=1,2,
\]
where $b_K := \lambda_{1}\lambda_{2}\lambda_{3}\lambda_{4}$ is the bubble function of $K$, and $\lambda_{i}$ ($i=1,2,3,4$) are the barycentric coordinates corresponding to the vertices of $K$.
Clearly $\mathbb{P}_{k}(K;\mathbb R^3)\subseteq\boldsymbol{V}_{k}^{ND}(K)\subset\boldsymbol{W}_{k}(K)$.
We have
\begin{equation*}
\operatorname{dim} \boldsymbol{W}_{k}(K)= \begin{cases}32, & k=1, \\ 42, & k=2.\end{cases}
\end{equation*}
We propose the following degrees of freedom (DoFs) for space $\boldsymbol{W}_{k}(K)$
\begin{align}
\label{Wkdof1}
\int_{e} \boldsymbol{v} \cdot \boldsymbol{t}\, q\, \mathrm{d} s & \quad \forall~q \in \mathbb{P}_{k}(e), e \in \mathcal{E}(K),\\
\label{Wkdof2}
\int_{F}\boldsymbol{v} \times \boldsymbol{n}\cdot \boldsymbol{q}\, \mathrm{d}S &\quad \forall~\boldsymbol{q}\in \boldsymbol{RM}_{k-2}(F), F \in \mathcal{F}(K),\\
\label{Wkdof3}
\int_{F}(\curl \boldsymbol{v}) \times \boldsymbol{n}\cdot \boldsymbol{q}\, \mathrm{d}S &\quad \forall~\boldsymbol{q}\in \boldsymbol{RT}(F), F \in \mathcal{F}(K),
\end{align}
where $\boldsymbol{RM}_{k-2}(F):=\mathbb{P}_{0}(F;\mathbb{R}^{2})\oplus\boldsymbol{x}^{\perp}\mathbb{P}_{k-2}(F)$ with $\boldsymbol{x}^{\perp}:=(\boldsymbol{n}\times \boldsymbol{x})|_F$, and $\boldsymbol{RT}(F):=\mathbb{P}_{0}(F; \mathbb{R}^{2})\oplus\boldsymbol{x}\mathbb{P}_{0}(F)$.
DoFs \eqref{Wkdof1}-\eqref{Wkdof2} are inspired by the N\'ed\'elec elements. Indeed
DoFs \eqref{Wkdof1}-\eqref{Wkdof2} are uni-solvent for $\boldsymbol{V}_{k}^{ND}(K)$ \cite{Nedelec1980,Nedelec1986}. DoF \eqref{Wkdof3} is related to the bubble space $b_{K}\mathbb{P}_{1}(K;\mathbb{R}^{3})$.


To show the unisolvence of the $H(\grad\curl)$-nonconforming finite elements, we recall an $H^1$-nonconforming finite element for Stokes problem in \cite{TaiWinther2006}.
Let $\boldsymbol{V}^{d}(K)=\mathbb{P}_{1}\left(K ; \mathbb{R}^{3}\right)\oplus \curl (b_{K}\mathbb{P}_{1}(K; \mathbb{R}^{3}))$. The unisolvent DoFs for $\boldsymbol{V}^{d}(K)$ are (cf. \cite[Lemma 4]{TaiWinther2006})
\begin{align}
\label{Vskdof1}
\int_{F} (\boldsymbol{v} \cdot \boldsymbol{n})\, q\, \mathrm{d}S & \quad \forall~q \in \mathbb{P}_{1}(F), F \in \mathcal{F}(K),\\
\label{Vskdof2}
\int_F(\boldsymbol{v} \times \boldsymbol{n})\cdot \boldsymbol{q}\, \mathrm{d}S &\quad \forall~\boldsymbol{q}\in \boldsymbol{RT}(F), F \in \mathcal{F}(K).
\end{align}
DoF \eqref{Vskdof2} corresponds to DoF \eqref{Wkdof3}.

\begin{lemma}\label{lem:bkq0}
For $\boldsymbol{v}\in \boldsymbol{x} \times \mathbb{P}_{1}\left(K ; \mathbb{R}^{3}\right) + b_{K}\mathbb{P}_{1}(K;\mathbb{R}^{3})$ satisfying
$
\curl\boldsymbol{v}=\boldsymbol{0},
$
we have $\boldsymbol{v}=\boldsymbol{0}$.
\end{lemma}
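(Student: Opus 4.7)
The plan is to fix a decomposition $\boldsymbol{v} = \boldsymbol{x}\times\boldsymbol{p} + b_K\boldsymbol{q}$ with $\boldsymbol{p},\boldsymbol{q}\in\mathbb{P}_1(K;\mathbb{R}^3)$ and decouple the two pieces of $\curl\boldsymbol{v}$ via the direct sum $\boldsymbol{V}^d(K) = \mathbb{P}_1(K;\mathbb{R}^3) \oplus \curl(b_K\mathbb{P}_1(K;\mathbb{R}^3))$ just recalled. Applying the identity $\curl(\boldsymbol{a}\times\boldsymbol{b}) = \boldsymbol{a}\div\boldsymbol{b} - \boldsymbol{b}\div\boldsymbol{a} + (\boldsymbol{b}\cdot\nabla)\boldsymbol{a} - (\boldsymbol{a}\cdot\nabla)\boldsymbol{b}$ with $\boldsymbol{a}=\boldsymbol{x}$ shows that $\curl(\boldsymbol{x}\times\boldsymbol{p})$ is already a linear vector field, and hence lies in $\mathbb{P}_1(K;\mathbb{R}^3)$, while $\curl(b_K\boldsymbol{q})$ lies in $\curl(b_K\mathbb{P}_1(K;\mathbb{R}^3))$ by construction. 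The hypothesis $\curl\boldsymbol{v}=\boldsymbol{0}$ together with directness of the sum then forces $\curl(\boldsymbol{x}\times\boldsymbol{p}) = \boldsymbol{0}$ and $\curl(b_K\boldsymbol{q}) = \boldsymbol{0}$ separately.

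For the first equation, writing $\boldsymbol{p} = \boldsymbol{p}_0 + A\boldsymbol{x}$ with constant $\boldsymbol{p}_0\in\mathbb{R}^3$ and constant matrix $A$, the same identity yields $\curl(\boldsymbol{x}\times\boldsymbol{p}) = -2\boldsymbol{p}_0 + ((\tr A)I - 3A)\boldsymbol{x}$. Equating the constant and linear parts to zero forces $\boldsymbol{p}_0=\boldsymbol{0}$ and $A=\lambda I$ for some $\lambda\in\mathbb{R}$, so $\boldsymbol{p}=\lambda\boldsymbol{x}$ and $\boldsymbol{x}\times\boldsymbol{p}=\boldsymbol{0}$.

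For the second equation I would expand $\curl(b_K\boldsymbol{q}) = \nabla b_K\times\boldsymbol{q} + b_K\curl\boldsymbol{q}$ and restrict to each face $F_i$ opposite vertex $i$, where $b_K|_{F_i}=0$ and $\nabla b_K|_{F_i} = \bigl(\prod_{j\neq i}\lambda_j\bigr)\nabla\lambda_i$. Since the cubic factor is not identically zero on $F_i$, this forces $(\nabla\lambda_i\times\boldsymbol{q})|_{F_i}=\boldsymbol{0}$, so $\boldsymbol{q}|_{F_i}$ is pointwise parallel to the constant vector $\nabla\lambda_i$. Each vertex $v_k$ lies on the three faces $F_j$ with $j\neq k$, whose gradients $\nabla\lambda_j$ are linearly independent; hence $\boldsymbol{q}(v_k)=\boldsymbol{0}$, and the linear polynomial $\boldsymbol{q}$ vanishes at all four affinely independent vertices of $K$, so $\boldsymbol{q}=\boldsymbol{0}$. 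Combining the two pieces yields $\boldsymbol{v}=\boldsymbol{0}$. The delicate part is the decoupling at the start: it hinges on the sharp algebraic fact that $\curl(\boldsymbol{x}\times\boldsymbol{p})$ is linear, not merely quadratic, which is exactly what allows the $\boldsymbol{V}^d(K)$ direct sum to peel off the two contributions cleanly; the remaining case work is elementary once this observation is in hand.
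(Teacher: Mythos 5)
Your proof is correct, and while it follows the same skeleton as the paper's (decompose $\boldsymbol{v}=\boldsymbol{x}\times\boldsymbol{p}+b_K\boldsymbol{q}$, kill each curl separately, then kill each piece), it executes two of the three steps by genuinely different means. For the decoupling, the paper does not invoke the directness of $\mathbb{P}_{1}(K;\mathbb{R}^{3})\oplus\curl(b_{K}\mathbb{P}_{1}(K;\mathbb{R}^{3}))$ as a black box; instead it observes that $\curl(b_K\boldsymbol{q})\cdot\boldsymbol{n}=(\boldsymbol{n}\times\nabla)\cdot(b_K\boldsymbol{q})=0$ on $\partial K$ and applies BDM unisolvence to the $\mathbb{P}_1$ field $\curl(\boldsymbol{x}\times\boldsymbol{p})$ — which is essentially the proof of the directness you are citing from the Tai--Winther construction, so your shortcut is legitimate but hides the same mechanism. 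For $\curl(\boldsymbol{x}\times\boldsymbol{p})=\boldsymbol{0}\Rightarrow\boldsymbol{x}\times\boldsymbol{p}=\boldsymbol{0}$ the paper simply cites injectivity of $\curl$ on $\boldsymbol{x}\times\mathbb{P}_1(K;\mathbb{R}^3)$, whereas you verify it by the explicit formula $-2\boldsymbol{p}_0+((\tr A)I-3A)\boldsymbol{x}$, which is more self-contained. The real divergence is the last step: the paper deduces $\boldsymbol{q}=\boldsymbol{0}$ from $\curl(b_K\boldsymbol{q})=\boldsymbol{0}$ via the de Rham complex on $K$ ($b_K\boldsymbol{q}=\nabla w$ with $w\in H_0^2(K)\cap\mathbb{P}_6(K)$, forcing the impossible factor $b_K^2\in\mathbb{P}_8$), while your face-restriction argument ($\boldsymbol{q}|_{F_i}$ parallel to $\nabla\lambda_i$, hence $\boldsymbol{q}$ vanishes at every vertex) is entirely elementary and avoids the $H_0^2$ machinery — a nice trade-off in transparency. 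One small quibble: your closing remark overstates the subtlety, since $\boldsymbol{x}\times\mathbb{P}_1(K;\mathbb{R}^3)\subset\mathbb{P}_2(K;\mathbb{R}^3)$ makes $\curl(\boldsymbol{x}\times\boldsymbol{p})\in\mathbb{P}_1(K;\mathbb{R}^3)$ automatic; nothing sharp is needed there.
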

\begin{proof}
Let $\boldsymbol{v}=\boldsymbol{\phi}+b_K\boldsymbol{\psi}$ with $\boldsymbol{\phi}\in\boldsymbol{x} \times \mathbb{P}_{1}(K ; \mathbb{R}^{3})$ and $\boldsymbol{\psi}\in\mathbb{P}_{1}(K;\mathbb{R}^{3})$. Then $\curl\boldsymbol{\phi}=-\curl(b_K\boldsymbol{\psi})\in\mathbb{P}_{1}(K;\mathbb{R}^{3})$, and
\[
(\curl\boldsymbol{\phi})\cdot\boldsymbol{n}=-\curl(b_K\boldsymbol{\psi})\cdot\boldsymbol{n}=-(\boldsymbol{n}\times\nabla)\cdot(b_K\boldsymbol{\psi})=0  \textrm{ on } \partial K.
\]
By the unisolvence of the lowest order Brezzi-Douglas-Marini (BDM) element \cite{BrezziDouglasDuranFortin1987}, we get $\curl\boldsymbol{\phi}=\boldsymbol{0}$ and $\curl(b_K\boldsymbol{\psi})=\boldsymbol{0}$. Noting that $\curl: \boldsymbol{x} \times \mathbb{P}_{1}(K ; \mathbb{R}^{3})\to\mathbb{P}_{1}(K ; \mathbb{R}^{3})$ is injective, it follows $\boldsymbol{\phi}=\boldsymbol{0}$.

Since $\curl(b_{K}\boldsymbol{\psi})=\boldsymbol{0}$ and $b_{K}\boldsymbol{\psi}\in\boldsymbol{H}_0^1(K;\mathbb R^3)$, by the de Rham complex \cite{CostabelMcIntosh2010}, there exists $w\in H_0^2(K)$ satisfying $b_{K}\boldsymbol{\psi}=\nabla w$. It is easy to see $w\in H_0^2(K)\cap\mathbb P_6(K)$, which means $w$ has a factor $b_K^2\in\mathbb P_8(K)$. Thus $w=0$ and $\boldsymbol{\psi}=\boldsymbol{0}$.
\end{proof}

As an immediate result of Lemma~\ref{lem:bkq0},
\begin{equation}\label{eq:WKcurl0}
\boldsymbol{W}_{k}(K)\cap\ker(\curl)=\boldsymbol{V}_{k}^{ND}(K)\cap\ker(\curl)=\nabla \mathbb{P}_{k+1}(K),
\end{equation}
where $\ker(\curl)$ means the kernel space of operator $\curl$.

\begin{lemma}
The degrees of freedom \eqref{Wkdof1}-\eqref{Wkdof3} are unisolvent for space $\boldsymbol{W}_{k}(K)$.
\end{lemma}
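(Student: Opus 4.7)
The plan is a dimension count followed by a kernel argument. First, I would verify that the cardinality of the DoFs \eqref{Wkdof1}--\eqref{Wkdof3} equals $\dim \boldsymbol{W}_k(K)$: edge DoFs contribute $6(k+1)$, face DoFs of type \eqref{Wkdof2} contribute $4\dim \boldsymbol{RM}_{k-2}(F)$ (which is $8$ for $k=1$ and $12$ for $k=2$), and face DoFs of type \eqref{Wkdof3} contribute $4\cdot 3=12$, summing to $32$ and $42$ respectively. Then it suffices to show that if $\boldsymbol{v}\in\boldsymbol{W}_k(K)$ annihilates all DoFs, then $\boldsymbol{v}=\boldsymbol{0}$.

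Split $\boldsymbol{v}=\boldsymbol{w}+b_K\boldsymbol{\psi}$ with $\boldsymbol{w}\in\boldsymbol{V}_k^{ND}(K)$ and $\boldsymbol{\psi}\in\mathbb{P}_1(K;\mathbb{R}^3)$. The idea is to pass to $\curl\boldsymbol{v}$ and recognise it as an element of the Tai--Winther space $\boldsymbol{V}^d(K)$. Indeed $\curl\boldsymbol{w}\in \mathbb{P}_1(K;\mathbb{R}^3)$ by a direct computation using $\curl(\boldsymbol{x}\times\boldsymbol{q})$, and $\curl(b_K\boldsymbol{\psi})\in\curl(b_K\mathbb{P}_1(K;\mathbb{R}^3))$, so $\curl\boldsymbol{v}\in\boldsymbol{V}^d(K)$. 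The goal is then to show that $\curl\boldsymbol{v}$ annihilates the Tai--Winther DoFs \eqref{Vskdof1}--\eqref{Vskdof2}, which by \cite[Lemma 4]{TaiWinther2006} forces $\curl\boldsymbol{v}=\boldsymbol{0}$.

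Vanishing of the Tai--Winther DoFs \eqref{Vskdof2} on $\curl\boldsymbol{v}$ is immediate from \eqref{Wkdof3}. For \eqref{Vskdof1}, I would use the surface integration-by-parts identity
\begin{equation*}
\int_{F}(\curl\boldsymbol{v})\cdot\boldsymbol{n}_F\, q\,\mathrm{d}S=\int_{F}(\boldsymbol{v}\times\boldsymbol{n}_F)\cdot\nabla q\,\mathrm{d}S+\sum_{e\in\mathcal{E}(F)}\int_{e}(\boldsymbol{v}\cdot\boldsymbol{t})\,q\,\mathrm{d}s
\end{equation*}
valid for $q\in\mathbb{P}_1(F)$. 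The boundary sum vanishes since $q|_e\in\mathbb{P}_1(e)\subseteq\mathbb{P}_k(e)$, which kills \eqref{Wkdof1}; the surface term vanishes because $\nabla q$ is a constant tangential vector, hence lies in $\mathbb{P}_0(F;\mathbb{R}^2)\subseteq\boldsymbol{RM}_{k-2}(F)$, and is killed by \eqref{Wkdof2}. This is the step I expect to be the most delicate: carefully justifying the IBP identity and confirming that $\nabla q\in\boldsymbol{RM}_{k-2}(F)$ for both $k=1,2$.

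Once $\curl\boldsymbol{v}=\boldsymbol{0}$, the identity \eqref{eq:WKcurl0} places $\boldsymbol{v}$ in $\nabla\mathbb{P}_{k+1}(K)\subset\boldsymbol{V}_k^{ND}(K)$; in particular the bubble part vanishes. Since $b_K\boldsymbol{\psi}\equiv\boldsymbol{0}$ on $\partial K$, the remaining DoFs \eqref{Wkdof1}--\eqref{Wkdof2} coincide with the standard N\'ed\'elec DoFs of $\boldsymbol{V}_k^{ND}(K)$, whose unisolvence is recalled in the text. Hence $\boldsymbol{v}=\boldsymbol{0}$, concluding the proof.
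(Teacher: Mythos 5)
Your proof is correct and follows essentially the same route as the paper: a dimension count, reduction to $\curl\boldsymbol{v}=\boldsymbol{0}$ via the unisolvence of the Tai--Winther DoFs \eqref{Vskdof1}--\eqref{Vskdof2}, and then the conclusion via \eqref{eq:WKcurl0} and the N\'ed\'elec unisolvence. The only (harmless) divergence is in how \eqref{Vskdof1} is handled: the paper first deduces $\boldsymbol{v}\times\boldsymbol{n}=\boldsymbol{0}$ pointwise on $\partial K$ from the unisolvence of the N\'ed\'elec trace DoFs and hence $(\curl\boldsymbol{v})\cdot\boldsymbol{n}=0$ on $\partial K$, whereas you obtain only the vanishing of the $\mathbb{P}_1(F)$ moments of $(\curl\boldsymbol{v})\cdot\boldsymbol{n}$ by surface integration by parts, which is equally sufficient (the sign of the surface term in your identity is off under the paper's orientation conventions, but both terms vanish separately, so nothing is affected).
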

\begin{proof}
The number of degrees of freedom \eqref{Wkdof1}-\eqref{Wkdof3} is 
\[
6(k+1)+20+4(k-1)=10k+22,
\]
which equals to the dimension of $\boldsymbol{W}_{k}(K)$. 

Assume $\boldsymbol{v} \in \boldsymbol{W}_{k}(K)$ and all the DoFs \eqref{Wkdof1}-\eqref{Wkdof3} vanish.
Since $\boldsymbol{v}|_{\partial K}\in \boldsymbol{V}_{k}^{ND}(K)|_{\partial K}$ and DoFs \eqref{Wkdof1}-\eqref{Wkdof2} are unisolvent for $\boldsymbol{V}_{k}^{ND}(K)$, we obtain $\boldsymbol{v}\times \boldsymbol{n}=\boldsymbol{0}$ on $\partial K$. As a result, 
\[
(\curl\boldsymbol{v}) \cdot \boldsymbol{n}=(\boldsymbol{n}\times\nabla)\cdot \boldsymbol{v}=0 \quad \textrm{ on } \partial K.
\]
Noting that $\curl\boldsymbol{v}\in\boldsymbol{V}^{d}(K)$ and DoFs \eqref{Vskdof1}-\eqref{Vskdof2} are unisolvent for $\boldsymbol{V}^{d}(K)$, we get from the last identity and the vanishing DoF \eqref{Wkdof3} that $\curl\boldsymbol{v}=\boldsymbol{0}$. Due to~\eqref{eq:WKcurl0}, $\boldsymbol{v}\in\nabla \mathbb{P}_{k+1}(K)\subset \boldsymbol{V}_{k}^{ND}(K)$. Therefore $\boldsymbol{v}=\boldsymbol{0}$ follows from the unisolvence of N\'ed\'elec elements.
\end{proof}

Define the global $H(\grad \curl)$-nonconforming
element space
\[
\boldsymbol{W}_{h0}:=\{\boldsymbol{v}_{h} \in \boldsymbol{W}_{h}:\text{ DoF \eqref{Wkdof3} on } \partial \Omega \text{ vanishes}\},
\]
where
\begin{align*}
\boldsymbol{W}_{h}:=\{\boldsymbol{v}_{h} \in \boldsymbol{L}^{2}(\Omega ; \mathbb{R}^{3}):&\,\boldsymbol{v}_{h}|_{K} \in \boldsymbol{W}_{k}(K) \text { for each } K \in \mathcal{T}_{h}, 
\\& \text { all the DoFs \eqref{Wkdof1}-\eqref{Wkdof3} are single-valued}, \\
&\text{ and all the DoFs \eqref{Wkdof1}-\eqref{Wkdof2} on } \partial \Omega \text{ vanish}\}.
\end{align*}
By definition, all the DoFs \eqref{Wkdof1}-\eqref{Wkdof3} for functions in $\boldsymbol{W}_{h0}$ vanish on $\partial \Omega$.
Noting that \eqref{Wkdof1}-\eqref{Wkdof2} are exactly the unisolvent DoFs for the N\'ed\'elec element space $\boldsymbol{V}_{k}^{ND}(K)$, by $\boldsymbol{W}_{k}(K)|_{\partial K}=\boldsymbol{V}_{k}^{ND}(K)|_{\partial K}$, we have $\boldsymbol{W}_{h}\subset \boldsymbol{H}_{0}(\curl, \Omega)$. 




\subsection{Finite element Stokes complexes}
We will consider the nonconforming finite element discretization of the Stokes complex \eqref{eq:StokesComplex3d} in this section. 
Recall the Lagrange element space
\[
V_{h}^{g}:=\left\{v_{h} \in H_0^{1}(\Omega):\left.v_{h}\right|_{K} \in \mathbb{P}_{k+1}(K) \text{ for each } K \in \mathcal{T}_{h}\right\},
\]
the piecewise constant space
\[
\mathcal{Q}_{h}:=\{q_{h} \in L_0^{2}(\Omega):q_{h}|_{K} \in \mathbb{P}_{0}(K) \text { for each } K \in \mathcal{T}_{h}\},
\]
and the $H^1$-nonconforming finite element space in \cite{TaiWinther2006}
\[
\boldsymbol{V}_{h0}^d:=\{\boldsymbol{v}_{h} \in \boldsymbol{V}_{h}^d: \text{ all the DoFs \eqref{Vskdof1}-\eqref{Vskdof2} on } \partial \Omega \text{ vanish}\},
\]
where
\begin{align*}
\boldsymbol{V}_{h}^d:=\{\boldsymbol{v}_{h} \in \boldsymbol{H}_0(\div,\Omega):&\,\boldsymbol{v}_{h}|_{K} \in \boldsymbol{V}^d(K) \text { for each } K \in \mathcal{T}_{h}, 
\\& \text { all the DoFs \eqref{Vskdof1}-\eqref{Vskdof2} are single-valued} \}.
\end{align*}
Thanks to the weak continuity,
the space $\boldsymbol{V}^{d}_{h0}$ possesses the norm equivalence \cite{Brenner2003}
\begin{equation*}
\|\boldsymbol{v}_{h}\|_{1, h}\simeq |\boldsymbol{v}_{h}|_{1, h} \quad \forall~\boldsymbol{v}_{h} \in \boldsymbol{V}_{h0}^{d}.
\end{equation*}

\begin{lemma} 
The nonconforming finite element Stokes complexes
\begin{equation}\label{discretecomplex}
0 \xrightarrow{\subset} V_{h}^{g} \xrightarrow{\nabla} \boldsymbol{W}_{h0} \xrightarrow{\curl} \boldsymbol{V}_{h0}^{d} \xrightarrow{\div} \mathcal{Q}_{h} \rightarrow 0,
\end{equation}
\begin{equation}\label{discretecomplex1}
0 \xrightarrow{\subset} V_{h}^{g} \xrightarrow{\nabla} \boldsymbol{W}_{h} \xrightarrow{\curl} \boldsymbol{V}_{h}^{d} \xrightarrow{\div} \mathcal{Q}_{h} \rightarrow 0
\end{equation}
are exact.
\end{lemma}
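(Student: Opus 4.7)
The plan is to proceed in three stages: verify that the arrows compose to zero (complex property), then establish exactness at three of the four positions (the easy ones), and finally close the remaining position at $\boldsymbol{V}_{h0}^d$ by a dimension count. I focus on the complex \eqref{discretecomplex}; the proof for \eqref{discretecomplex1} is parallel, with boundary constraints dropped.

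For the complex property, the central computation is $\curl \boldsymbol{W}_{h0}\subseteq\boldsymbol{V}_{h0}^d$. Using the decomposition $\boldsymbol{W}_k(K) = \nabla\mathbb{P}_{k+1}(K)\oplus(\boldsymbol{x}\times\mathbb{P}_1(K;\mathbb{R}^3))\oplus b_K\mathbb{P}_1(K;\mathbb{R}^3)$, the first summand is killed by $\curl$, the identity $\curl(\boldsymbol{x}\times\boldsymbol{p})=\boldsymbol{x}\,\div\boldsymbol{p}-2\boldsymbol{p}-(\boldsymbol{x}\cdot\nabla)\boldsymbol{p}$ shows $\curl(\boldsymbol{x}\times\mathbb{P}_1(K;\mathbb{R}^3))\subseteq\mathbb{P}_1(K;\mathbb{R}^3)$, and $\curl(b_K\mathbb{P}_1(K;\mathbb{R}^3))$ is by definition the second summand of $\boldsymbol{V}^d(K)$, so $\curl\boldsymbol{W}_k(K)\subseteq\boldsymbol{V}^d(K)$. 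Conformity follows because DoF \eqref{Vskdof2} coincides with DoF \eqref{Wkdof3} (single-valued in $\boldsymbol{W}_{h0}$), while DoF \eqref{Vskdof1} equals $\int_F(\boldsymbol{n}\times\nabla)\cdot\boldsymbol{v}\,q\,\mathrm{d}S$, depending only on the tangential trace of $\boldsymbol{v}$ and therefore single-valued by $\boldsymbol{W}_{h0}\subset\boldsymbol{H}_0(\curl,\Omega)$; the corresponding boundary DoFs vanish analogously. The remaining containments $\nabla V_h^g\subseteq\boldsymbol{W}_{h0}$ and $\div\boldsymbol{V}_{h0}^d\subseteq\mathcal{Q}_h$, together with $\curl\nabla=0$ and $\div\curl=0$, are immediate.

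The three easy exactness statements: injectivity of $\nabla$ on $V_h^g$ is trivial; for $\ker(\curl|_{\boldsymbol{W}_{h0}})=\nabla V_h^g$, the nontrivial direction exploits $\boldsymbol{W}_{h0}\subset\boldsymbol{H}_0(\curl,\Omega)$ and the continuous de Rham isomorphism on the simply-connected $\Omega$ to produce $p\in H_0^1(\Omega)$ with $\boldsymbol{v}=\nabla p$, and the local identity \eqref{eq:WKcurl0} then forces $p|_K\in\mathbb{P}_{k+1}(K)$, so $p\in V_h^g$. Surjectivity of $\div:\boldsymbol{V}_{h0}^d\to\mathcal{Q}_h$ follows from the continuous surjection $\div:\boldsymbol{H}_0^1(\Omega;\mathbb{R}^3)\to L_0^2(\Omega)$ combined with the Tai-Winther commuting interpolation $I_h^d$ satisfying $\div I_h^d=Q_h^0\div$~\cite{TaiWinther2006}.

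The main obstacle is exactness at $\boldsymbol{V}_{h0}^d$: since the space is $H^1$-nonconforming, one cannot directly lift a divergence-free element via the continuous de Rham isomorphism into $\boldsymbol{W}_{h0}$. I would instead use a dimension count. The three statements above yield
\begin{equation*}
\dim\curl\boldsymbol{W}_{h0}=\dim\boldsymbol{W}_{h0}-\dim V_h^g,\qquad \dim\ker(\div|_{\boldsymbol{V}_{h0}^d})=\dim\boldsymbol{V}_{h0}^d-\dim\mathcal{Q}_h,
\end{equation*}
so the remaining equality $\curl\boldsymbol{W}_{h0}=\ker(\div|_{\boldsymbol{V}_{h0}^d})$ (whose $\subseteq$ direction is already in hand) reduces to the Euler identity $\dim V_h^g-\dim\boldsymbol{W}_{h0}+\dim\boldsymbol{V}_{h0}^d-\dim\mathcal{Q}_h=0$. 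This in turn is a routine DoF count: each interior face contributes $3$ to both $\dim\boldsymbol{W}_{h0}$ (via DoF \eqref{Wkdof3}) and $\dim\boldsymbol{V}_{h0}^d$ (via DoF \eqref{Vskdof2}), and these contributions cancel in the alternating sum, leaving the classical Euler--Poincar\'e identity for a de Rham-type complex with homogeneous boundary. The argument for \eqref{discretecomplex1} is identical, with boundary sets replaced by full sets.
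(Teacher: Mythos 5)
Your proposal is correct and follows essentially the same route as the paper: establish the complex property, identify $\boldsymbol{W}_{h0}\cap\ker(\curl)=\nabla V_h^g$ via the continuous Stokes complex together with the local identity \eqref{eq:WKcurl0}, take $\div\boldsymbol{V}_{h0}^d=\mathcal{Q}_h$ from Tai--Winther, and close exactness at $\boldsymbol{V}_{h0}^d$ by the dimension count that reduces to Euler's formula. The paper merely carries out the count explicitly in terms of $\#\mathcal{V}_h^i$, $\#\mathcal{E}_h^i$, $\#\mathcal{F}_h^i$, $\#\mathcal{T}_h$, where you summarize it as the Euler--Poincar\'e identity; the extra detail you supply on the complex property (the $\curl(\boldsymbol{x}\times\boldsymbol{p})$ identity and the single-valuedness of the DoFs) is what the paper dismisses as ``straightforward to check.''
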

\begin{proof}
For the proof of \eqref{discretecomplex1} is similar, we only prove \eqref{discretecomplex} is an exact sequence.
It is straightforward to check that each space in sequence \eqref{discretecomplex}  is mapped into the succeeding space by the given differential operator, hence sequence \eqref{discretecomplex} is a complex. We refer to \cite{TaiWinther2006} for $\div\boldsymbol{V}_{h0}^{d}= \mathcal{Q}_{h}$.

It follows from Stokes complex \eqref{eq:StokesComplex3d} and \eqref{eq:WKcurl0} that
$\boldsymbol{W}_{h0} \cap \ker(\curl)=\nabla V_{h}^{g}$. Then
\begin{align*}
\dim\curl\boldsymbol{W}_{h0} &=\dim\boldsymbol{W}_{h0}-\dim V_{h}^{g} \\
&=(k+1)\#\mathcal{E}_{h}^i +(k+4)\# \mathcal{F}_{h}^i-\# \mathcal{V}_{h}^i-k\# \mathcal{E}_{h}^i-(k-1)\# \mathcal{F}_{h}^i \\
&=\# \mathcal{E}_{h}^i+5 \# \mathcal{F}_{h}^i-\# \mathcal{V}_{h}^i,
\end{align*}
By $\div\boldsymbol{V}_{h0}^{d}= \mathcal{Q}_{h}$,
\[
\dim\boldsymbol{V}_{h0}^{d}\cap\ker(\div)=\dim\boldsymbol{V}_{h0}^{d}-\dim\mathcal{Q}_{h}=6\# \mathcal{F}_{h}^i-\# \mathcal{T}_{h}+1.
\]
Employing the Euler's formula, we get
\[
\dim\boldsymbol{V}_{h0}^{d}\cap\ker(\div)-\dim\curl\boldsymbol{W}_{h0}=\#\mathcal{V}_{h}^i-\# \mathcal{E}_{h}^i+\# \mathcal{F}_{h}^i-\# \mathcal{T}_{h}+1=0,
\]
which indicates $\boldsymbol{V}_{h0}^{d}\cap\ker(\div)=\curl\boldsymbol{W}_{h0}$.
\end{proof}

\subsection{Interpolations and commutative diagram}
To introduce interpolations for complexes \eqref{discretecomplex}-\eqref{discretecomplex1}, we recall the finite element de Rham complex \cite{Arnold2018}
\[
0 \xrightarrow{\subset} V_{h}^{g} \xrightarrow{\nabla} \boldsymbol{V}_{h}^{ND} \xrightarrow{\curl} \boldsymbol{V}_{h}^{BDM} \xrightarrow{\div} \mathcal{Q}_{h} \rightarrow 0,
\]
where
\[
\boldsymbol{V}_{h}^{ND}:=\{\boldsymbol{v}_{h} \in \boldsymbol{H}_0(\curl,\Omega): \boldsymbol{v}_{h}|_{K} \in\boldsymbol{V}_{k}^{ND}(K) \text{ for each } K \in \mathcal{T}_{h}\},
\]
\[
\boldsymbol{V}_{h}^{BDM}:=\{\boldsymbol{v}_{h} \in \boldsymbol{H}_0(\div,\Omega): \boldsymbol{v}_{h}|_{K} \in\mathbb{P}_{1}\left(K ; \mathbb{R}^{3}\right) \text{ for each } K \in \mathcal{T}_{h}\}.
\]
Let $I_h^g: L^2(\Omega)\to V_{h}^{g}$, $I_h^{ND}: \boldsymbol L^2(\Omega;\mathbb R^3)\to \boldsymbol{V}_{h}^{ND}$, $I_h^{BDM}: \boldsymbol L^2(\Omega;\mathbb R^3)\to \boldsymbol{V}_{h}^{BDM}$ and $Q_h: L^2(\Omega)\to \mathcal{Q}_{h}$ be the local bounded projections devised in \cite{FalkWinther2014}, then it holds the commutative diagram
\begin{equation}\label{eq:feec}
\resizebox{0.9\hsize}{!}{$
\begin{array}{c}
\xymatrix{
 0 \ar[r]^-{\subset} & H_0^1(\Omega)\ar[d]^{I_h^g} \ar[r]^-{\nabla} & \boldsymbol{H}_0(\curl, \Omega) \ar[d]^{I_h^{ND}} \ar[r]^-{\curl}
                & \boldsymbol{H}_0(\div, \Omega) \ar[d]^{I_h^{BDM}}   \ar[r]^-{\div} & \ar[d]^{Q_{h}}L_0^2(\Omega) \ar[r]^{} & 0 \\
 0 \ar[r]^-{\subset} & V_{h}^g\ar[r]^-{\nabla} & \boldsymbol V_{h}^{ND} \ar[r]^{\curl}
                &  \boldsymbol V_{h}^{BDM}   \ar[r]^{\div} &  \mathcal Q_{h} \ar[r]^{}& 0  }
\end{array}.
$}
\end{equation}
We also have
\begin{equation}\label{eq:IhNDestimate}
\|\boldsymbol{v}-I_{h}^{ND}\boldsymbol{v}\|_{0}\lesssim h^{j}|\boldsymbol{v}|_j  \quad\forall~\boldsymbol{v}\in\boldsymbol{H}_0(\curl,\Omega)\cap\boldsymbol{H}^j(\Omega,\mathbb R^3), j=1,2,k+1,
\end{equation}
\begin{equation}\label{eq:IhBDMbound}
\|I_{h}^{BDM}\boldsymbol{v}\|_{0}\lesssim \|\boldsymbol{v}\|_{0}+h\|\div\boldsymbol{v}\|_{0}\quad\forall~\boldsymbol{v}\in\boldsymbol{H}_0(\div,\Omega),
\end{equation}
\begin{equation}\label{eq:IhBDMestimate}
\sum_{K\in\mathcal T_h}h_K^{2(i-j)}|\boldsymbol{v}-I_{h}^{BDM}\boldsymbol{v}|_{i,K}^2\lesssim |\boldsymbol{v}|_j^2  \quad\forall~\boldsymbol{v}\in\boldsymbol{H}_0(\div,\Omega)\cap\boldsymbol{H}^j(\Omega,\mathbb R^3), 
\end{equation}
for $i=0,1$ and $j=1,2$.

Based on DoFs \eqref{Wkdof1}-\eqref{Wkdof3},
define $I_{h}^{gc}: \boldsymbol L^2(\Omega;\mathbb R^3)\to \boldsymbol{W}_{h}$ as follows:
\begin{align*}
\big((I_{h}^{gc}\boldsymbol{v}) \cdot \boldsymbol{t}, q\big)_e=\big((I_{h}^{ND}\boldsymbol{v}) \cdot \boldsymbol{t}, q\big)_e & \quad \forall~q \in \mathbb{P}_{k}(e), e \in \mathcal{E}_h, \\
\big((I_{h}^{gc}\boldsymbol{v}) \times \boldsymbol{n}, \boldsymbol{q}\big)_F=\big((I_{h}^{ND}\boldsymbol{v}) \times \boldsymbol{n}, \boldsymbol{q}\big)_F &\quad \forall~\boldsymbol{q}\in \boldsymbol{RM}_{k-2}(F), F\in \mathcal{F}_h, \\
\big(\curl(I_{h}^{gc}\boldsymbol{v}) \times \boldsymbol{n}, \boldsymbol{q}\big)_F=\big(\{\curl(I_{h}^{ND}\boldsymbol{v}) \times \boldsymbol{n}\}, \boldsymbol{q}\big)_F &\quad \forall~\boldsymbol{q}\in \boldsymbol{RT}(F), F \in \mathcal{F}_h,
\end{align*}
where $\{\cdot\}$ is the average operator across $F$ for interior face $F\in\mathcal F_h^i$, and the identity operator for $F\in\mathcal F_h^{\partial}:=\mathcal F_h\backslash\mathcal F_h^i$. Similarly define $I_{h0}^{gc}: \boldsymbol L^2(\Omega;\mathbb R^3)\to \boldsymbol{W}_{h0}$ as follows:
\begin{align*}
\big((I_{h0}^{gc}\boldsymbol{v}) \cdot \boldsymbol{t}, q\big)_e=\big((I_{h}^{ND}\boldsymbol{v}) \cdot \boldsymbol{t}, q\big)_e & \quad \forall~q \in \mathbb{P}_{k}(e), e \in \mathcal{E}_h, \\
\big((I_{h0}^{gc}\boldsymbol{v}) \times \boldsymbol{n}, \boldsymbol{q}\big)_F=\big((I_{h}^{ND}\boldsymbol{v}) \times \boldsymbol{n}, \boldsymbol{q}\big)_F &\quad \forall~\boldsymbol{q}\in \boldsymbol{RM}_{k-2}(F), F\in \mathcal{F}_h, \\
\big(\curl(I_{h0}^{gc}\boldsymbol{v}) \times \boldsymbol{n}, \boldsymbol{q}\big)_F=\big(\{\curl(I_{h}^{ND}\boldsymbol{v}) \times \boldsymbol{n}\}, \boldsymbol{q}\big)_F &\quad \forall~\boldsymbol{q}\in \boldsymbol{RT}(F), F \in \mathcal{F}_h^i.
\end{align*}
Based on DoFs \eqref{Vskdof1}-\eqref{Vskdof2},
define $I_{h}^{d}: \boldsymbol L^2(\Omega;\mathbb R^3)\to \boldsymbol{V}_{h}^{d}$ as follows:
\begin{align*}
\big((I_{h}^{d}\boldsymbol{v}) \cdot \boldsymbol{n}, q\big)_F=\big((I_{h}^{BDM}\boldsymbol{v}) \cdot \boldsymbol{n}, q\big)_F &\quad \forall~q\in \mathbb P_{1}(F), F\in \mathcal{F}_h, \\
\big((I_{h}^{d}\boldsymbol{v}) \times \boldsymbol{n}, \boldsymbol{q}\big)_F=\big(\{(I_{h}^{BDM}\boldsymbol{v}) \times \boldsymbol{n}\}, \boldsymbol{q}\big)_F &\quad \forall~\boldsymbol{q}\in \boldsymbol{RT}(F), F \in \mathcal{F}_h.
\end{align*}
And define $I_{h0}^{d}: \boldsymbol L^2(\Omega;\mathbb R^3)\to \boldsymbol{V}_{h0}^{d}$ as
\begin{align*}
\big((I_{h0}^{d}\boldsymbol{v}) \cdot \boldsymbol{n}, q\big)_F=\big((I_{h}^{BDM}\boldsymbol{v}) \cdot \boldsymbol{n}, q\big)_F &\quad \forall~q\in \mathbb P_{1}(F), F\in \mathcal{F}_h, \\
\big((I_{h0}^{d}\boldsymbol{v}) \times \boldsymbol{n}, \boldsymbol{q}\big)_F=\big(\{(I_{h}^{BDM}\boldsymbol{v}) \times \boldsymbol{n}\}, \boldsymbol{q}\big)_F &\quad \forall~\boldsymbol{q}\in \boldsymbol{RT}(F), F \in \mathcal{F}_h^i.
\end{align*}

\begin{lemma}
We have the commutative diagrams
\begin{equation}\label{eq:stokescdncfem0}
\resizebox{0.9\hsize}{!}{$
\begin{array}{c}
\xymatrix{
 0 \ar[r]^-{\subset} & H_0^1(\Omega)\ar[d]^{I_h^{g}} \ar[r]^-{\nabla} & \boldsymbol{H}_0(\grad\curl, \Omega) \ar[d]^{I_{h0}^{gc}} \ar[r]^-{\curl}
                & \boldsymbol{H}_0^1(\Omega; \mathbb{R}^3) \ar[d]^{I_{h0}^d}   \ar[r]^-{\div} & \ar[d]^{Q_{h}}L_0^2(\Omega) \ar[r]^{} & 0 \\
 0 \ar[r]^-{\subset} & V_{h}^g\ar[r]^-{\nabla} & \boldsymbol W_{h0} \ar[r]^{\curl}
                &  \boldsymbol V_{h0}^d   \ar[r]^{\div} &  \mathcal Q_{h} \ar[r]^{}& 0  }
\end{array},
$}
\end{equation}
\begin{equation}\label{eq:stokescdncfem}
\resizebox{0.9\hsize}{!}{$
\begin{array}{c}
\xymatrix{
 0 \ar[r]^-{\subset} & H_0^1(\Omega)\ar[d]^{I_h^{g}} \ar[r]^-{\nabla} & \mathring{\boldsymbol{H}}(\grad\curl, \Omega) \ar[d]^{I_h^{gc}} \ar[r]^-{\curl}
                & \mathring{\boldsymbol{H}}^1(\Omega; \mathbb{R}^3) \ar[d]^{I_h^d}   \ar[r]^-{\div} & \ar[d]^{Q_{h}}L_0^2(\Omega) \ar[r]^{} & 0 \\
 0 \ar[r]^-{\subset} & V_{h}^g\ar[r]^-{\nabla} & \boldsymbol W_{h} \ar[r]^{\curl}
                &  \boldsymbol V_{h}^d   \ar[r]^{\div} &  \mathcal Q_{h} \ar[r]^{}& 0  }
\end{array},
$}
\end{equation}
where $\mathring{\boldsymbol{H}}(\grad\curl, \Omega):=\boldsymbol{H}(\grad\curl, \Omega)\cap\boldsymbol{H}_0(\curl, \Omega)$ and $\mathring{\boldsymbol{H}}^1(\Omega; \mathbb{R}^3):=\boldsymbol{H}^1(\Omega; \mathbb{R}^3)\cap\boldsymbol{H}_0(\div, \Omega)$.
\end{lemma}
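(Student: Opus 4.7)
The plan is to verify each of the three commuting squares of \eqref{eq:stokescdncfem0} separately by matching the degrees of freedom of the two resulting elements in the lower finite element space; the diagram \eqref{eq:stokescdncfem} then follows from the identical argument, with only the obvious changes. The rightmost square $\div\circ I_{h0}^d=Q_h\circ\div$ is routine: on each $K\in\mathcal T_h$ the divergence theorem reduces $\int_K\div(I_{h0}^d\boldsymbol v)\,\mathrm{d}x$ to the face normal moments of $I_{h0}^d\boldsymbol v$, which by definition equal those of $I_h^{BDM}\boldsymbol v$, and the conclusion follows from $\div\circ I_h^{BDM}=Q_h\circ\div$ in \eqref{eq:feec}. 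For the leftmost square, take $v\in H_0^1(\Omega)$: since $\curl(\nabla v)=0$ and $I_h^{ND}(\nabla v)=\nabla I_h^g v$ by \eqref{eq:feec}, the three defining identities of $I_{h0}^{gc}(\nabla v)$ become exactly the DoFs \eqref{Wkdof1}--\eqref{Wkdof3} of $\nabla I_h^g v\in\nabla V_h^g\subset\boldsymbol W_{h0}$, whence unisolvence of $\boldsymbol W_k(K)$ yields $I_{h0}^{gc}(\nabla v)=\nabla I_h^g v$.

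The middle square is the main step. Both $\curl I_{h0}^{gc}\boldsymbol v$ and $I_{h0}^d\curl\boldsymbol v$ lie in $\boldsymbol V_{h0}^d$, so it suffices to match the DoFs \eqref{Vskdof1}--\eqref{Vskdof2}. For \eqref{Vskdof2}, the definitions give $(\curl I_{h0}^{gc}\boldsymbol v\times\boldsymbol n,\boldsymbol q)_F=(\{\curl I_h^{ND}\boldsymbol v\times\boldsymbol n\},\boldsymbol q)_F$, and applying $\curl\circ I_h^{ND}=I_h^{BDM}\circ\curl$ from \eqref{eq:feec} converts this into $(\{I_h^{BDM}\curl\boldsymbol v\times\boldsymbol n\},\boldsymbol q)_F=(I_{h0}^d\curl\boldsymbol v\times\boldsymbol n,\boldsymbol q)_F$. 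For \eqref{Vskdof1}, the key identity is $(\curl\boldsymbol w)\cdot\boldsymbol n=(\boldsymbol n\times\nabla)\cdot\boldsymbol w$, which expresses the normal component of the curl on a face $F$ solely in terms of the tangential trace $\boldsymbol w\times\boldsymbol n|_F$. Since $\boldsymbol W_k(K)|_{\partial K}=\boldsymbol V_k^{ND}(K)|_{\partial K}$ and the DoFs \eqref{Wkdof1}--\eqref{Wkdof2} of $I_{h0}^{gc}\boldsymbol v$ are inherited from those of $I_h^{ND}\boldsymbol v$, unisolvence of the N\'ed\'elec DoFs on $\partial K$ forces $(I_{h0}^{gc}\boldsymbol v)\times\boldsymbol n=(I_h^{ND}\boldsymbol v)\times\boldsymbol n$ on every face. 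Hence $(\curl I_{h0}^{gc}\boldsymbol v)\cdot\boldsymbol n=(\curl I_h^{ND}\boldsymbol v)\cdot\boldsymbol n=(I_h^{BDM}\curl\boldsymbol v)\cdot\boldsymbol n$ on $F$ by \eqref{eq:feec}, matching the DoF defining $I_{h0}^d\curl\boldsymbol v$.

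The main obstacle in this plan is the presence of the averaging operator $\{\cdot\}$ in the tangential face DoFs: the tangential traces of $\curl I_h^{ND}\boldsymbol v$ and $I_h^{BDM}\curl\boldsymbol v$ are generally double-valued across interior faces because $\boldsymbol V_h^{BDM}$ is only $H(\div)$-conforming rather than $H(\curl)$-conforming. This is neutralized by the commutativity $\curl\circ I_h^{ND}=I_h^{BDM}\circ\curl$ in \eqref{eq:feec}, which pairs the two double-valued traces together so that their averages agree; hence the $\{\cdot\}$ on the $\boldsymbol W_{h0}$ side is perfectly compatible with the $\{\cdot\}$ on the $\boldsymbol V_{h0}^d$ side. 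For diagram \eqref{eq:stokescdncfem}, the argument is literally the same, the only change being that $\mathcal F_h^i$ is replaced by $\mathcal F_h$ throughout and the averaging reduces to the identity operator on boundary faces.
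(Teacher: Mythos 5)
Your proposal is correct and follows essentially the same route as the paper: each commuting square is verified by showing the two candidate interpolants in the lower space share all defining DoFs, using the commutativity $\curl\circ I_h^{ND}=I_h^{BDM}\circ\curl$ and $\div\circ I_h^{BDM}=Q_h\circ\div$ from \eqref{eq:feec} together with unisolvence. Your explicit justification of the normal-component DoF in the middle square via the identity $(\curl\boldsymbol w)\cdot\boldsymbol n=(\boldsymbol n\times\nabla)\cdot\boldsymbol w$ and the coincidence of tangential traces is exactly the step the paper leaves implicit in its final ``$=0$''.
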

\begin{proof}
We only prove \eqref{eq:stokescdncfem0}, since \eqref{eq:stokescdncfem} can be proved similarly.
For $v\in H_0^1(\Omega)$, by the commutative diagram \eqref{eq:feec}, it follows
\[
I_{h0}^{gc}(\nabla v)-\nabla(I_h^gv)=I_{h0}^{gc}(\nabla v)-I_h^{ND}(\nabla v)\in\boldsymbol W_{h0}.
\]
Noting that $\curl(I_{h}^{ND}(\nabla v))=I_{h}^{BDM}(\curl(\nabla v))=\boldsymbol{0}$, by the definition of $I_{h0}^{gc}$, all the DoFs \eqref{Wkdof1}-\eqref{Wkdof3} of $I_{h0}^{gc}(\nabla v)-I_h^{ND}(\nabla v)$ vanish, thus $I_{h0}^{gc}(\nabla v)-I_h^{ND}(\nabla v)=\boldsymbol{0}$, i.e.,
\begin{equation}\label{eq:stokescdncfemprop1}
I_{h0}^{gc}(\nabla v)=\nabla(I_h^gv)\quad\forall~v\in H_0^1(\Omega).
\end{equation}

For $\boldsymbol{v}\in \boldsymbol{H}_0(\grad\curl, \Omega)$, by complex \eqref{discretecomplex}, $I_{h0}^{d}(\curl \boldsymbol{v})-\curl(I_{h0}^{gc}\boldsymbol{v})\in\boldsymbol V_{h0}^d$.
We get from $I_{h}^{BDM}(\curl \boldsymbol{v})=\curl(I_{h}^{ND}\boldsymbol{v})$ and the definitions of $I_{h0}^{d}$ and $I_{h0}^{gc}$ that
\begin{align*}
\big((I_{h0}^{d}(\curl \boldsymbol{v})-\curl(I_{h0}^{gc}\boldsymbol{v})) \cdot \boldsymbol{n}, q\big)_F&=\big((I_{h}^{BDM}(\curl \boldsymbol{v})-\curl(I_{h0}^{gc}\boldsymbol{v})) \cdot \boldsymbol{n}, q\big)_F\\
&=\big((\curl(I_{h}^{ND}\boldsymbol{v}- I_{h0}^{gc}\boldsymbol{v})) \cdot \boldsymbol{n}, q\big)_F=0
\end{align*}
for $q\in \mathbb P_{1}(F)$ and $F\in \mathcal{F}_h$, and
\begin{align*}
&\quad\;\big((I_{h0}^{d}(\curl \boldsymbol{v})-\curl(I_{h0}^{gc}\boldsymbol{v})) \times \boldsymbol{n}, \boldsymbol{q}\big)_F\\
&=\big(\{(I_{h}^{BDM}(\curl \boldsymbol{v})) \times \boldsymbol{n}\}-\{\curl(I_{h}^{ND}\boldsymbol{v}) \times \boldsymbol{n}\}, \boldsymbol{q}\big)_F =0
\end{align*}
for all $\boldsymbol{q}\in \boldsymbol{RT}(F)$ and $F \in \mathcal{F}_h^i$.
Hence
\begin{equation}\label{eq:stokescdncfemprop2}
I_{h0}^{d}(\curl \boldsymbol{v})=\curl(I_{h0}^{gc}\boldsymbol{v}) \quad\forall~\boldsymbol{v}\in \boldsymbol{H}_0(\grad\curl, \Omega).
\end{equation}

For $\boldsymbol{v}\in \boldsymbol{H}_0^1(\Omega;\mathbb R^3)$, by complex \eqref{discretecomplex}, $Q_{h}(\div\boldsymbol{v})-\div(I_{h0}^{d}\boldsymbol{v})\in\mathcal Q_{h}$.
It follows from the definition of $I_{h0}^{d}$ and the integration by parts that
\[
(\div(I_{h0}^{d}\boldsymbol{v})-\div(I_{h}^{BDM}\boldsymbol{v}), q)_K=((I_{h0}^{d}\boldsymbol{v}-I_{h}^{BDM}\boldsymbol{v})\cdot\boldsymbol{n}, q)_{\partial K}=0
\]
for $q\in\mathbb P_0(K)$ and $K\in\mathcal T_h$. Thanks to the commutative diagram \eqref{eq:feec},
\begin{equation}\label{eq:stokescdncfemprop3}
\div(I_{h0}^{d}\boldsymbol{v})=\div(I_{h}^{BDM}\boldsymbol{v})=Q_{h}(\div\boldsymbol{v}).
\end{equation}
Finally combining \eqref{eq:stokescdncfemprop1}-\eqref{eq:stokescdncfemprop3} gives the commutative diagram \eqref{eq:stokescdncfem0}.
\end{proof}

\section{Regularity of the Quad-Curl Singular Perturbation Problem}\label{sec:regularity}

We will analyze the regularity of the quad-curl singular perturbation problem~\eqref{quadcurl} in this section. Hereafter we always assume domain $\Omega$ is convex.

\subsection{Mixed formulation}
Due to the identity $\curl^{2}\boldsymbol{v} = -\Delta\boldsymbol{v}+\nabla(\div\boldsymbol{v})$ with $\boldsymbol{v}=\curl\boldsymbol{u}$, the first equation in (\ref{quadcurl}) can be rewritten as
\begin{equation}\label{eq:curl2laplaceperturb}
-\varepsilon^{2} \curl \Delta(\curl \boldsymbol{u})+\curl^{2} \boldsymbol{u}=\boldsymbol{f}.
\end{equation}
Then a mixed formulation of problem \eqref{quadcurl} is to find $\boldsymbol{u} \in \boldsymbol{H}_{0}(\grad \curl, \Omega)$ and $\lambda \in H_{0}^{1}(\Omega)$ such that
\begin{align}
\label{mixfor1}
\varepsilon^{2}a(\boldsymbol{u}, \boldsymbol{v})+b(\boldsymbol{u}, \boldsymbol{v})+ c(\boldsymbol{v}, \lambda) &=(\boldsymbol{f}, \boldsymbol{v})\quad  \forall~\boldsymbol{v} \in \boldsymbol{H}_{0}(\grad \curl, \Omega), \\
\label{mixfor2}
c(\boldsymbol{u}, \mu) &=0 \quad\quad\quad  \forall~\mu \in H_{0}^{1}(\Omega),
\end{align}
where the bilinear forms
\[
a(\boldsymbol{u}, \boldsymbol{v})=(\nabla\curl \boldsymbol{u}, \nabla\curl\boldsymbol{v}),\quad  b(\boldsymbol{u}, \boldsymbol{v})=(\curl \boldsymbol{u}, \curl \boldsymbol{v}), \quad c(\boldsymbol{v}, \lambda)=(\boldsymbol{v}, \nabla \lambda).
\]

For $\boldsymbol{v}\in \boldsymbol{H}_{0}(\grad \curl, \Omega)$, equip the squared norm
\[
\|\boldsymbol{v}\|_{\varepsilon}^{2}:=\|\boldsymbol{v}\|_{0}^{2}+\|\curl \boldsymbol{v}\|_{0}^{2}+\varepsilon^{2}|\curl \boldsymbol{v}|_{1}^{2}.
\]
Clearly we have
\[
\varepsilon^{2}a(\boldsymbol{u}, \boldsymbol{v})+b(\boldsymbol{u}, \boldsymbol{v})\leq \|\boldsymbol{u}\|_{\varepsilon}\|\boldsymbol{v}\|_{\varepsilon}\quad\forall~\boldsymbol{u}, \boldsymbol{v}\in \boldsymbol{H}_{0}(\grad \curl, \Omega),
\]
\[
c(\boldsymbol{v}, \mu)\leq \|\boldsymbol{v}\|_{\varepsilon}|\mu|_1\quad\forall~\boldsymbol{v}\in \boldsymbol{H}_{0}(\grad \curl, \Omega), \mu\in H_{0}^{1}(\Omega).
\]

\begin{lemma}
The mixed formulation \eqref{mixfor1}-\eqref{mixfor2} is well-posed, and it holds
\[
\|\boldsymbol{u}\|_{0}+\|\curl \boldsymbol{u}\|_{0}+\varepsilon|\curl \boldsymbol{u}|_{1}\lesssim \|\boldsymbol{f}\|_0.
\]
\end{lemma}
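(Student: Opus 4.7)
The plan is to apply the Brezzi theory of mixed problems with the norm $\|\cdot\|_\varepsilon$ on the primal space $\boldsymbol{H}_0(\grad\curl,\Omega)$ and $|\cdot|_1$ on the Lagrange multiplier space $H^1_0(\Omega)$. Continuity of $\varepsilon^2 a + b$ and of $c$ in these norms is already recorded in the excerpt, so only the inf-sup condition for $c$ and the coercivity of $\varepsilon^2 a + b$ on the kernel of $c$ remain to be checked, both uniformly in $\varepsilon$.

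For the inf-sup condition, given $\mu\in H^1_0(\Omega)$ I would simply take the test function $\boldsymbol{v}=\nabla\mu$, which lies in $\boldsymbol{H}_0(\grad\curl,\Omega)$ because $\curl\nabla\mu=\boldsymbol{0}$. Then $c(\nabla\mu,\mu)=|\mu|_1^2$ while $\|\nabla\mu\|_\varepsilon=|\mu|_1$, giving an inf-sup constant equal to $1$ independent of $\varepsilon$.

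The coercivity on the kernel is the point where uniformity in $\varepsilon$ is at stake. Denote $\boldsymbol{Z}:=\{\boldsymbol{v}\in\boldsymbol{H}_0(\grad\curl,\Omega):c(\boldsymbol{v},\mu)=0\ \forall\,\mu\in H^1_0(\Omega)\}$. Any such $\boldsymbol{v}$ is orthogonal to $\nabla H_0^1(\Omega)$ in $L^2$, so by the Helmholtz decomposition on the convex (hence simply connected) domain $\Omega$, $\boldsymbol{v}$ is weakly divergence-free and admits the Poincar\'e-type bound $\|\boldsymbol{v}\|_0\lesssim\|\curl\boldsymbol{v}\|_0$. Combining this with the trivial identities $\varepsilon^2 a(\boldsymbol{v},\boldsymbol{v})=\varepsilon^2|\curl\boldsymbol{v}|_1^2$ and $b(\boldsymbol{v},\boldsymbol{v})=\|\curl\boldsymbol{v}\|_0^2$ yields
\[
\varepsilon^2 a(\boldsymbol{v},\boldsymbol{v})+b(\boldsymbol{v},\boldsymbol{v})\gtrsim\|\boldsymbol{v}\|_0^2+\|\curl\boldsymbol{v}\|_0^2+\varepsilon^2|\curl\boldsymbol{v}|_1^2=\|\boldsymbol{v}\|_\varepsilon^2,
\]
with a constant depending only on $\Omega$. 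This is the step I expect to be the main (mild) obstacle, because its uniformity in $\varepsilon$ is exactly what makes the mixed method robust; the crucial point is that the $\varepsilon$-independent $L^2$ control of $\boldsymbol{v}$ comes for free from the kernel condition, not from the bilinear form itself.

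Given these two Brezzi conditions, the abstract theory gives well-posedness and the a priori bound $\|\boldsymbol{u}\|_\varepsilon+|\lambda|_1\lesssim\|\boldsymbol{f}\|_{\boldsymbol{H}(\grad\curl)'}$ with an $\varepsilon$-independent constant. Since $\boldsymbol{f}\in\boldsymbol{L}^2(\Omega;\mathbb{R}^3)$ and the embedding $\boldsymbol{H}_0(\grad\curl,\Omega)\hookrightarrow\boldsymbol{L}^2$ gives $(\boldsymbol{f},\boldsymbol{v})\leq\|\boldsymbol{f}\|_0\|\boldsymbol{v}\|_0\leq\|\boldsymbol{f}\|_0\|\boldsymbol{v}\|_\varepsilon$, the stated estimate $\|\boldsymbol{u}\|_0+\|\curl\boldsymbol{u}\|_0+\varepsilon|\curl\boldsymbol{u}|_1\lesssim\|\boldsymbol{f}\|_0$ follows immediately from $\|\boldsymbol{u}\|_\varepsilon\lesssim\|\boldsymbol{f}\|_0$.
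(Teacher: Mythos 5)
Your proposal is correct and follows essentially the same route as the paper: the inf-sup condition via the test function $\nabla\mu$ (for which $\|\nabla\mu\|_{\varepsilon}=|\mu|_1$), coercivity of $\varepsilon^2 a+b$ on the divergence-free kernel via the Poincar\'e inequality $\|\boldsymbol{v}\|_0\lesssim\|\curl\boldsymbol{v}\|_0$ (the paper cites Arnold--Falk--Winther for this where you invoke the Helmholtz decomposition on the convex domain), and then the Babu\v{s}ka--Brezzi theory together with $(\boldsymbol{f},\boldsymbol{v})\leq\|\boldsymbol{f}\|_0\|\boldsymbol{v}\|_{\varepsilon}$. No gaps.
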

\begin{proof}
For nonzero $\mu\in H_0^1(\Omega)$, it follows the inf-sup condition
\[
|\mu|_1=\frac{c(\nabla\mu, \mu)}{\|\nabla\mu\|_0}=\frac{c(\nabla\mu, \mu)}{\|\nabla\mu\|_{\varepsilon}}\leq\sup_{\boldsymbol{v}\in\boldsymbol{H}_{0}(\grad \curl, \Omega)}\frac{c(\boldsymbol{v}, \mu)}{\|\boldsymbol{v}\|_{\varepsilon}}.
\]
For $\boldsymbol{v}\in\boldsymbol{H}_{0}(\grad \curl, \Omega)$ satisfying $\div\boldsymbol{v}=0$, by Poincar\'e's inequality \cite{ArnoldFalkWinther2006},
\[
\|\boldsymbol{v}\|_{\varepsilon}^2 \lesssim \varepsilon^{2}a(\boldsymbol{v}, \boldsymbol{v})+b(\boldsymbol{v}, \boldsymbol{v}).
\]
Then we end the proof by applying the Babu\v{s}ka-Brezzi theory~\cite{BoffiBrezziFortin2013}.
\end{proof}

By taking $\boldsymbol{v}=\nabla\lambda$ in \eqref{mixfor1}, we get $\lambda = 0$ from the fact $\div \boldsymbol{f} = 0$. Thus \eqref{mixfor1} is reduced to 
\[
\varepsilon^{2}a(\boldsymbol{u}, \boldsymbol{v})+b(\boldsymbol{u}, \boldsymbol{v})=(\boldsymbol{f}, \boldsymbol{v})\quad  \forall~\boldsymbol{v} \in \boldsymbol{H}_{0}(\operatorname{grad} \curl, \Omega).
\]

\subsection{Regularity} 
To derive the regularity of the solution $\boldsymbol{u}$ of the quad-curl singular perturbation problem \eqref{quadcurl}, we introduce the following auxiliary problem, i.e. double curl equation,
\begin{equation}\label{reduced}
\left\{\begin{aligned}
\curl^{2} \boldsymbol{u}_{0}&=\boldsymbol{f} & & \text { in } \Omega, \\
\operatorname{div} \boldsymbol{u}_{0} &=0 & & \text { in } \Omega, \\
\boldsymbol{u}_{0} \times \boldsymbol{n} &=\mathbf{0} & & \text { on } \partial \Omega,
\end{aligned}\right.
\end{equation}
which can be regarded as the limit of problem \eqref{quadcurl} as $\varepsilon$ approaches zero.
Since $\Omega$ is convex, it holds
\begin{equation}\label{eq:regularityu0}
\|\boldsymbol{u}_0\|_1+\|\curl\boldsymbol{u}_0\|_1\lesssim\|\boldsymbol{f}\|_0.
\end{equation}

\begin{lemma}
Let $\boldsymbol{u} \in \boldsymbol{H}_{0}(\grad \curl, \Omega)$ be the solution of problem \eqref{quadcurl} and $\boldsymbol{u}_{0} \in \boldsymbol{H}_{0}(\curl, \Omega)$ be the solution of problem \eqref{reduced}, then we have
\begin{equation}\label{regularity}
\varepsilon^{2}|\curl\boldsymbol{u}|_{2}+\varepsilon|\curl \boldsymbol{u}|_{1}
+\|\boldsymbol{u}-\boldsymbol{u}_{0}\|_{1}\lesssim\varepsilon^{1/2}\|\boldsymbol{f}\|_{0}.
\end{equation}
\end{lemma}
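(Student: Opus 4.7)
The plan is to reformulate \eqref{quadcurl} as a Brinkman-type problem for the vorticity $\boldsymbol{z}:=\curl\boldsymbol{u}$ and then extract the correct boundary-layer scaling. Set $\boldsymbol{z}_0:=\curl\boldsymbol{u}_0$; both fields are divergence-free, with $\boldsymbol{z}\in\boldsymbol{H}^1_0(\Omega;\mathbb R^3)$ by the definition of $\boldsymbol{H}_0(\grad\curl,\Omega)$, while $\boldsymbol{z}_0$ only belongs to $\boldsymbol{H}^1(\Omega;\mathbb R^3)$ with nonzero trace. First I will subtract the reduced identity $b(\boldsymbol{u}_0,\boldsymbol{v})=(\boldsymbol{f},\boldsymbol{v})$ (valid for $\boldsymbol{v}\in\boldsymbol{H}_0(\curl,\Omega)$) from $\varepsilon^{2}a(\boldsymbol{u},\boldsymbol{v})+b(\boldsymbol{u},\boldsymbol{v})=(\boldsymbol{f},\boldsymbol{v})$ and exploit the Stokes-complex surjectivity $\curl\,\boldsymbol{H}_0(\grad\curl,\Omega)=\boldsymbol{H}^1_0(\Omega;\mathbb R^3)\cap\ker\div$ to arrive at the weak Brinkman identity
\[
\varepsilon^{2}(\nabla\boldsymbol{z},\nabla\boldsymbol{\phi})+(\boldsymbol{z}-\boldsymbol{z}_0,\boldsymbol{\phi})=0\qquad\forall\,\boldsymbol{\phi}\in \boldsymbol{H}^1_0(\Omega;\mathbb R^3)\cap\ker\div,
\]
equivalently $-\varepsilon^{2}\Delta\boldsymbol{z}+\boldsymbol{z}+\nabla p=\boldsymbol{z}_0$ for some pressure $p$, with $\div\boldsymbol{z}=0$ and $\boldsymbol{z}|_{\partial\Omega}=\boldsymbol{0}$.

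The decisive ingredient will be a divergence-free boundary corrector $\boldsymbol{\xi}_{\varepsilon}\in\boldsymbol{H}^1(\Omega;\mathbb R^3)$ with $\boldsymbol{\xi}_{\varepsilon}|_{\partial\Omega}=\boldsymbol{z}_0|_{\partial\Omega}$, $\div\boldsymbol{\xi}_{\varepsilon}=0$, and the sharp scalings $\|\boldsymbol{\xi}_{\varepsilon}\|_0\lesssim \varepsilon^{1/2}\|\boldsymbol{z}_0\|_1$ and $|\boldsymbol{\xi}_{\varepsilon}|_1\lesssim \varepsilon^{-1/2}\|\boldsymbol{z}_0\|_1$; these are the familiar scalings of a Brinkman boundary layer of width $O(\varepsilon)$. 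Such a $\boldsymbol{\xi}_{\varepsilon}$ can be produced, for instance, by solving an auxiliary Brinkman system $-\varepsilon^{2}\Delta\boldsymbol{\xi}_{\varepsilon}+\boldsymbol{\xi}_{\varepsilon}+\nabla\pi=\boldsymbol{0}$, $\div\boldsymbol{\xi}_{\varepsilon}=0$ with prescribed trace $\boldsymbol{z}_0|_{\partial\Omega}$ (whose solution decays exponentially in an $O(\varepsilon)$ layer), or alternatively via a cutoff of $\boldsymbol{z}_0$ in a tubular neighborhood of $\partial\Omega$ combined with a Bogovskii divergence correction. Constructing $\boldsymbol{\xi}_{\varepsilon}$ with precisely these $\varepsilon^{\pm 1/2}$ scalings, which improve the generic $H^{1/2}(\partial\Omega)\to H^1(\Omega)$ lifting by the crucial factor $\varepsilon^{1/2}$, is the principal technical hurdle of the proof.

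Once $\boldsymbol{\xi}_{\varepsilon}$ is available, $\boldsymbol{\phi}:=\boldsymbol{z}-(\boldsymbol{z}_0-\boldsymbol{\xi}_{\varepsilon})$ lies in $\boldsymbol{H}^1_0(\Omega;\mathbb R^3)\cap\ker\div$ and is therefore admissible in the Brinkman identity. Plugging it in, using $\boldsymbol{z}-\boldsymbol{z}_0=\boldsymbol{\phi}-\boldsymbol{\xi}_{\varepsilon}$, and applying Cauchy-Schwarz and Young's inequalities should yield
\[
\varepsilon^{2}|\boldsymbol{\phi}|_1^{2}+\|\boldsymbol{\phi}\|_0^{2}\lesssim \varepsilon^{2}|\boldsymbol{z}_0-\boldsymbol{\xi}_{\varepsilon}|_1^{2}+\|\boldsymbol{\xi}_{\varepsilon}\|_0^{2}\lesssim\varepsilon\|\boldsymbol{f}\|_0^{2},
\]
after inserting the corrector bounds together with $\|\boldsymbol{z}_0\|_1\lesssim\|\boldsymbol{f}\|_0$ from \eqref{eq:regularityu0}. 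The triangle inequality will then deliver $\|\boldsymbol{z}-\boldsymbol{z}_0\|_0\lesssim\varepsilon^{1/2}\|\boldsymbol{f}\|_0$ and $\varepsilon|\boldsymbol{z}|_1\lesssim\varepsilon^{1/2}\|\boldsymbol{f}\|_0$. Reinterpreting the reformulation as a Stokes system $-\varepsilon^{2}\Delta\boldsymbol{z}+\nabla p=\boldsymbol{z}_0-\boldsymbol{z}$ with $L^2$-datum of size $\varepsilon^{1/2}\|\boldsymbol{f}\|_0$ and invoking convex-domain $H^2$-Stokes regularity will then give $\varepsilon^{2}|\boldsymbol{z}|_2\lesssim\varepsilon^{1/2}\|\boldsymbol{f}\|_0$. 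Finally, $\boldsymbol{w}:=\boldsymbol{u}-\boldsymbol{u}_0$ is divergence-free with $\boldsymbol{w}\times\boldsymbol{n}=\boldsymbol{0}$, so the convex-domain inequality $\|\boldsymbol{w}\|_1\lesssim\|\curl\boldsymbol{w}\|_0=\|\boldsymbol{z}-\boldsymbol{z}_0\|_0$ closes the remaining bound.
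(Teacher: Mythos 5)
Your reduction of the lemma to a Brinkman-type problem for $\boldsymbol{z}=\curl\boldsymbol{u}$ is valid, and your closing steps (energy estimate with the corrector, then convex-domain Stokes $H^2$ regularity to get $\varepsilon^{2}|\curl\boldsymbol{u}|_{2}$, then the embedding $\|\boldsymbol{u}-\boldsymbol{u}_{0}\|_{1}\lesssim\|\curl(\boldsymbol{u}-\boldsymbol{u}_{0})\|_{0}$) coincide with what the paper ultimately does. The genuine gap is exactly where you flag it: the divergence-free lift $\boldsymbol{\xi}_{\varepsilon}$ of $\curl\boldsymbol{u}_{0}|_{\partial\Omega}$ with the simultaneous bounds $\|\boldsymbol{\xi}_{\varepsilon}\|_{0}\lesssim\varepsilon^{1/2}\|\curl\boldsymbol{u}_{0}\|_{1}$ and $|\boldsymbol{\xi}_{\varepsilon}|_{1}\lesssim\varepsilon^{-1/2}\|\curl\boldsymbol{u}_{0}\|_{1}$ carries the entire content of the lemma, and neither of your suggested constructions delivers it as stated. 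The cutoff $\chi_{\varepsilon}\curl\boldsymbol{u}_{0}$ has the right scalings but the wrong divergence; repairing the divergence with a global Bogovskii operator gives only $\|\boldsymbol{w}\|_{0}\lesssim|\boldsymbol{w}|_{1}\lesssim\|\nabla\chi_{\varepsilon}\cdot\curl\boldsymbol{u}_{0}\|_{0}\lesssim\varepsilon^{-1/2}\|\curl\boldsymbol{u}_{0}\|_{1}$, which destroys the $L^{2}$ bound, while a Bogovskii operator posed on the $O(\varepsilon)$-layer itself needs inf-sup constants uniform in the layer thickness, which fail for generic anisotropic domains. The automatically divergence-free alternative $\curl(\chi_{\varepsilon}\boldsymbol{u}_{0})$ produces the commutator $\nabla\chi_{\varepsilon}\times\boldsymbol{u}_{0}$, which is only $O(1)$ in $L^{2}$ (even after exploiting $\boldsymbol{u}_{0}\times\boldsymbol{n}=\boldsymbol{0}$ and a one-dimensional Poincar\'e inequality in the normal direction), not $O(\varepsilon^{1/2})$, because only $\boldsymbol{u}_{0}\in\boldsymbol{H}^{1}$ is available. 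Finally, the auxiliary Brinkman system with prescribed trace is circular: establishing its $\varepsilon^{\pm1/2}$ layer scalings on a convex polyhedron, with edges and corners, is a boundary-layer analysis at least as hard as the lemma itself.

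The paper sidesteps the corrector entirely. It first records the a priori bound $\|\curl\boldsymbol{u}\|_{2}\lesssim\varepsilon^{-2}\|\curl(\boldsymbol{u}-\boldsymbol{u}_{0})\|_{0}$ from Stokes regularity, then integrates $\varepsilon^{2}\curl^{4}\boldsymbol{u}=\curl^{2}(\boldsymbol{u}_{0}-\boldsymbol{u})$ by parts against $\curl(\boldsymbol{u}-\boldsymbol{u}_{0})$ to obtain $\varepsilon^{2}\|\curl^{2}\boldsymbol{u}\|_{0}^{2}+\|\curl(\boldsymbol{u}-\boldsymbol{u}_{0})\|_{0}^{2}=\varepsilon^{2}(\curl^{2}\boldsymbol{u},\boldsymbol{f})+\varepsilon^{2}(\boldsymbol{n}\times\curl^{2}\boldsymbol{u},\curl\boldsymbol{u}_{0})_{\partial\Omega}$, and controls the boundary term by the multiplicative trace inequality combined with the a priori bound; Young's inequality then absorbs everything and yields $\varepsilon\|\curl^{2}\boldsymbol{u}\|_{0}+\|\curl(\boldsymbol{u}-\boldsymbol{u}_{0})\|_{0}\lesssim\varepsilon^{1/2}\|\boldsymbol{f}\|_{0}$ with no explicit layer function. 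If you wish to keep the corrector route you must actually construct $\boldsymbol{\xi}_{\varepsilon}$ and prove both scalings; otherwise the trace-inequality bootstrap is the shorter and safer path.
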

\begin{proof}
Let $\boldsymbol{\phi}=\curl\boldsymbol{u}\in\boldsymbol{H}_0^1(\Omega;\mathbb R^3)$.
It follows from \eqref{eq:curl2laplaceperturb} and \eqref{reduced} that
\[
-\varepsilon^{2} \curl(\Delta\boldsymbol{\phi})=\boldsymbol{f}-\curl^{2} \boldsymbol{u}=\curl^{2}( \boldsymbol{u}_0-\boldsymbol{u}).
\]
Thanks to the de Rham complex, the last equation implies that $\boldsymbol{\phi}\in\boldsymbol{H}_0^1(\Omega;\mathbb R^3)$ satisfies the Stokes equation
\[
\left\{
\begin{aligned}
-\Delta \boldsymbol{\phi} - \nabla p & = \varepsilon^{-2}\curl( \boldsymbol{u}_0-\boldsymbol{u})\quad\;\text { in } \Omega, \\
\div\boldsymbol{\phi}&=0 \qquad\qquad\qquad\qquad\text { in } \Omega.
\end{aligned}
\right.
\]
By the regularity of the Stokes equation \cite[Section 11.5]{MazyaRossmann2010},
\begin{equation}\label{eq:20211223-1}
\|\curl\boldsymbol{u}\|_{2}= \|\boldsymbol{\phi}\|_{2}\lesssim \varepsilon^{-2}\|\curl(\boldsymbol{u}-\boldsymbol{u}_{0})\|_{0}.
\end{equation}

By \eqref{quadcurl} and \eqref{reduced},  we get $\varepsilon^{2} \curl^4\boldsymbol{u}=\curl^{2}( \boldsymbol{u}_0-\boldsymbol{u})$. Then it follows from the integration by parts that
\begin{align*}
\|\curl(\boldsymbol{u}-\boldsymbol{u}_{0})\|_{0}^{2}&= -\varepsilon^{2}(\curl^{3}\boldsymbol{u},\curl( \boldsymbol{u}-\boldsymbol{u}_{0}))\\
&= -\varepsilon^{2}(\curl^{2}\boldsymbol{u}, \curl^{2}(\boldsymbol{u}-\boldsymbol{u}_{0}))
+\varepsilon^{2}(\boldsymbol{n}\times \curl^{2}\boldsymbol{u}, \curl\boldsymbol{u}_{0})_{\partial\Omega}.
\end{align*}
Then we have
\begin{align*}
&\quad\varepsilon^{2}\|\curl^{2}\boldsymbol{u}\|_{0}^{2}+\|\curl(\boldsymbol{u}-\boldsymbol{u}_{0})\|_{0}^{2} \\
& = \varepsilon^{2}(\curl^{2}\boldsymbol{u},\boldsymbol{f})+\varepsilon^{2}(\boldsymbol{n}\times \curl^{2}\boldsymbol{u}, \curl\boldsymbol{u}_{0})_{\partial\Omega}\\
& \leq\varepsilon^{2}\|\curl^{2}\boldsymbol{u}\|_{0}\|\boldsymbol{f}\|_{0}
+\varepsilon^{2}\|\curl^{2}\boldsymbol{u}\|_{0,\partial\Omega}\|\curl\boldsymbol{u}_{0}\|_{0,\partial\Omega}.
\end{align*}
Applying the multiplicative trace inequality, we acquire from \eqref{eq:20211223-1} that
\begin{align*}
&\varepsilon^{2}\|\curl^{2}\boldsymbol{u}\|_{0}^{2}+\|\curl(\boldsymbol{u}-\boldsymbol{u}_{0})\|_{0}^{2}\\
\lesssim& 
\varepsilon^{2}\|\boldsymbol{f}\|_{0}^{2}
+\varepsilon^{2}\|\curl^{2}\boldsymbol{u}\|_{0}^{1/2}\|\curl^{2}\boldsymbol{u}\|_{1}^{1/2}\|\curl\boldsymbol{u}_{0}\|_{1} \\
\lesssim& 
\varepsilon^{2}\|\boldsymbol{f}\|_{0}^{2}
+\varepsilon\|\curl^{2}\boldsymbol{u}\|_{0}^{1/2}\|\curl(\boldsymbol{u}-\boldsymbol{u}_{0})\|_{0}^{1/2}\|\curl\boldsymbol{u}_{0}\|_{1},
\end{align*}
which implies
\[
\varepsilon^{2}\|\curl^{2}\boldsymbol{u}\|_{0}^{2}+\|\curl(\boldsymbol{u}-\boldsymbol{u}_{0})\|_{0}^{2}
\lesssim \varepsilon^{2}\|\boldsymbol{f}\|_{0}^{2}
+\varepsilon\|\curl\boldsymbol{u}_{0}\|_{1}^2.
\]
Employing \eqref{eq:regularityu0} we have
\begin{equation*}
\varepsilon\|\curl^{2}\boldsymbol{u}\|_{0}+\|\curl(\boldsymbol{u}-\boldsymbol{u}_{0})\|_{0}
\lesssim \varepsilon^{1/2}\|\boldsymbol{f}\|_{0}.
\end{equation*}
Since $\curl\boldsymbol{u}\in \boldsymbol{H}_{0}(\div, \Omega)\cap \boldsymbol{H}(\curl, \Omega)$, $\boldsymbol{u}-\boldsymbol{u}_{0}\in \boldsymbol{H}(\div, \Omega)\cap \boldsymbol{H}_{0}(\curl, \Omega)$, and $\div(\boldsymbol{u}-\boldsymbol{u}_{0})=0$, we acquire \cite{GiraultRaviart1986}
\[
\|\curl\boldsymbol{u}\|_{1}\lesssim\|\curl^{2}\boldsymbol{u}\|_{0}\lesssim\varepsilon^{-1/2}\|\boldsymbol{f}\|_{0},
\]
\[
\|\boldsymbol{u}-\boldsymbol{u}_{0}\|_{1}\lesssim\|\curl(\boldsymbol{u}-\boldsymbol{u}_{0})\|_{0}\lesssim\varepsilon^{1/2}\|\boldsymbol{f}\|_{0},
\]
Combining the last two inequalities and \eqref{eq:20211223-1} produces \eqref{regularity}.
\end{proof}

\section{Robust Mixed Finite Element Method}\label{sec:mfem}

We will propose and analyze a robust mixed finite element method for the quad-curl singular perturbation problem in this section.

\subsection{Mixed finite element method}

Based on the mixed formulation \eqref{mixfor1}-\eqref{mixfor2}, a mixed finite element methods for problem \eqref{quadcurl} is to find $\boldsymbol{u}_{h0} \in \boldsymbol{W}_{h0}$ and $\lambda_{h} \in V_{h}^{g}$ such that
\begin{align}
\label{dismixfor1}
\varepsilon^{2}a_{h}(\boldsymbol{u}_{h0}, \boldsymbol{v}_{h})+b(\boldsymbol{u}_{h0}, \boldsymbol{v}_{h})+(\boldsymbol{v}_{h}, \nabla \lambda_{h}) &=(\boldsymbol{f}, \boldsymbol{v}_{h}) \quad \forall~\boldsymbol{v}_{h} \in \boldsymbol{W}_{h0}, \\
\label{dismixfor2}
(\boldsymbol{u}_{h0}, \nabla \mu_{h}) &=0 \quad\quad\quad\;\;  \forall~\mu_h \in V_{h}^{g},
\end{align}
where
\[
a_{h}(\boldsymbol{u}_{h0}, \boldsymbol{v}_{h})=(\nabla_h\curl \boldsymbol{u}_{h0}, \nabla_h\curl\boldsymbol{v}_h) = \sum_{K\in\mathcal T_h}(\nabla\curl \boldsymbol{u}_{h0}, \nabla\curl\boldsymbol{v}_h)_K.
\]

Equip $\boldsymbol{W}_{h}$ with the discrete squared norm
\[
\|\boldsymbol{v}_{h}\|_{\varepsilon,h}^{2}:=\|\boldsymbol{v}_{h}\|_{0}^{2}+\|\curl \boldsymbol{v}_{h}\|_{0}^{2}+\varepsilon^{2}|\curl \boldsymbol{v}_{h}|_{1, h}^{2}.
\]
It is easy to see that
\[
\varepsilon^{2}a_h(\boldsymbol{w}_h, \boldsymbol{v}_h)+b(\boldsymbol{w}_h, \boldsymbol{v}_h)\leq \|\boldsymbol{w}_h\|_{\varepsilon,h}\|\boldsymbol{v}_h\|_{\varepsilon,h}\;\;\forall~\boldsymbol{w}_h, \boldsymbol{v}_h\in \boldsymbol{W}_{h}+\boldsymbol{H}(\grad \curl, \Omega),
\]
\[
c(\boldsymbol{v}_h, \mu_h)\leq \|\boldsymbol{v}_h\|_{\varepsilon,h}|\mu_h|_1\;\;\forall~\boldsymbol{v}_h\in \boldsymbol{W}_{h}+\boldsymbol{H}(\grad \curl, \Omega), \mu_h\in H^1(\Omega).
\]

Next we show the well-posedness of the mixed finite element method \eqref{dismixfor1}-\eqref{dismixfor2} and the stability. 

\begin{lemma}\label{lem:discretepoincare} 
It holds the discrete Poincar\'e inequality
\begin{align}\label{discretepoincare}
\|\boldsymbol{v}_{h}\|_{0} \lesssim\|\curl \boldsymbol{v}_{h}\|_{0} \quad \forall~\boldsymbol{v}_{h} \in \mathcal{K}_{h0}^{d},
\end{align}
where $\mathcal{K}_{h0}^{d}:=\big\{\boldsymbol{v}_{h} \in \boldsymbol{W}_{h0}:(\boldsymbol{v}_{h}, \nabla q_{h})=0$ for each $q_{h} \in V_{h}^{g}\big\}.$
\end{lemma}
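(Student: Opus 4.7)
The plan is to exploit the inclusion $\boldsymbol{W}_{h0}\subset \boldsymbol{H}_0(\curl,\Omega)$ and combine a continuous Helmholtz (regular) decomposition on the convex domain with the commuting interpolation $I_{h0}^{gc}$ from diagram \eqref{eq:stokescdncfem0}; the discrete orthogonality that defines $\mathcal{K}_{h0}^d$ will then kill the gradient component of the decomposition after passing to the discrete level.

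More concretely, for $\boldsymbol{v}_h\in\mathcal{K}_{h0}^d$ I would first write
\[
\boldsymbol{v}_h=\boldsymbol{z}+\nabla\phi,\qquad \boldsymbol{z}\in\boldsymbol{H}_0^1(\Omega;\mathbb{R}^3),\quad \phi\in H_0^1(\Omega),
\]
with $\curl\boldsymbol{z}=\curl\boldsymbol{v}_h$, $\div\boldsymbol{z}=0$, and $\|\boldsymbol{z}\|_1\lesssim \|\curl\boldsymbol{v}_h\|_0$, obtained from the Stokes regularity on the convex domain $\Omega$. Applying $I_{h0}^{gc}$ and using the commuting identity $I_{h0}^{gc}(\nabla\phi)=\nabla(I_h^g\phi)$ from \eqref{eq:stokescdncfem0} gives $I_{h0}^{gc}\boldsymbol{v}_h=I_{h0}^{gc}\boldsymbol{z}+\nabla(I_h^g\phi)$, and I would split
\[
\|\boldsymbol{v}_h\|_0^2=(\boldsymbol{v}_h,I_{h0}^{gc}\boldsymbol{z})+(\boldsymbol{v}_h,\nabla I_h^g\phi)+(\boldsymbol{v}_h,\boldsymbol{v}_h-I_{h0}^{gc}\boldsymbol{v}_h).
\]
The middle term is zero because $I_h^g\phi\in V_h^g$ and $\boldsymbol{v}_h\in\mathcal{K}_{h0}^d$. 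For the first, the $L^2$-boundedness of $I_{h0}^{gc}$ on $\boldsymbol{H}^1$ yields $\|I_{h0}^{gc}\boldsymbol{z}\|_0\lesssim \|\boldsymbol{z}\|_1\lesssim \|\curl\boldsymbol{v}_h\|_0$, so Cauchy--Schwarz controls it by $\|\boldsymbol{v}_h\|_0\|\curl\boldsymbol{v}_h\|_0$. For the third, an approximation estimate $\|\boldsymbol{v}_h-I_{h0}^{gc}\boldsymbol{v}_h\|_0\lesssim h\|\curl\boldsymbol{v}_h\|_0$ gives a bound of the same type. Combining yields $\|\boldsymbol{v}_h\|_0^2\lesssim \|\boldsymbol{v}_h\|_0\|\curl\boldsymbol{v}_h\|_0$, which implies \eqref{discretepoincare}.

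The hard part will be the approximation estimate $\|\boldsymbol{v}_h-I_{h0}^{gc}\boldsymbol{v}_h\|_0\lesssim h\|\curl\boldsymbol{v}_h\|_0$ for functions already in $\boldsymbol{W}_{h0}$. Since $I_{h0}^{gc}$ is built from the Falk--Winther N\'ed\'elec projection $I_h^{ND}$, the difference $\boldsymbol{v}_h-I_{h0}^{gc}\boldsymbol{v}_h$ lives essentially in the elementwise bubble enrichment $b_K\mathbb{P}_1(K;\mathbb{R}^3)\subset \boldsymbol{W}_k(K)$. Its magnitude is governed by the face DoF \eqref{Wkdof3}, which encodes $\curl\boldsymbol{v}_h\times\boldsymbol{n}$ across interior faces, so the bound should follow from a local scaling argument on each tetrahedron together with a trace inequality, exploiting the injectivity of $\curl$ on the bubble space established in Lemma~\ref{lem:bkq0}.
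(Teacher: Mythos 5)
Your overall strategy (regular decomposition of $\boldsymbol{v}_h$, a commuting interpolation of the $\boldsymbol{H}^1$ potential, and the discrete orthogonality to remove the gradient part) is the same skeleton as the paper's, but the specific way you execute it has a genuine gap in the third term $(\boldsymbol{v}_h,\boldsymbol{v}_h-I_{h0}^{gc}\boldsymbol{v}_h)$. Writing $\boldsymbol{v}_h-I_{h0}^{gc}\boldsymbol{v}_h=(\boldsymbol{z}-I_{h0}^{gc}\boldsymbol{z})+\nabla(\phi-I_h^g\phi)$, the first piece is indeed $O(h|\boldsymbol{z}|_1)\lesssim h\|\curl\boldsymbol{v}_h\|_0$ by \eqref{errorestimateIh0gc2}, but the second piece is only controlled by $\|\nabla\phi\|_0\lesssim\|\boldsymbol{v}_h\|_0+\|\boldsymbol{z}\|_0$: the potential $\phi$ is merely $H_0^1$, it is not in $V_h^g$, and there is no extra regularity to extract an $h$. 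So the third term contributes $\|\boldsymbol{v}_h\|_0^2$ to the right-hand side and the argument becomes circular. Your fallback claim that $\boldsymbol{v}_h-I_{h0}^{gc}\boldsymbol{v}_h$ ``lives essentially in the bubble enrichment'' is also not correct: the Falk--Winther operator $I_h^{ND}$ is a projection onto $\boldsymbol{V}_h^{ND}$ but not the canonical DoF interpolation on the larger space $\boldsymbol{W}_{h0}$, so $I_{h0}^{gc}$ is not a projection on $\boldsymbol{W}_{h0}$; applied to the bubble component of $\boldsymbol{v}_h$ it produces a nontrivial N\'ed\'elec contribution, and the bubble component itself is not bounded by $h\|\curl\boldsymbol{v}_h\|_0$ without the very estimate you are trying to prove.

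The paper avoids this trap by never forming $I_{h0}^{gc}\boldsymbol{v}_h$. It introduces the \emph{nodal} interpolation $\mathcal{I}_{h0}^{gc}$ (defined directly by DoFs \eqref{Wkdof1}--\eqref{Wkdof3}) and applies it to the regular potential $\boldsymbol{z}$ only. Since $\curl\boldsymbol{z}=\curl\boldsymbol{v}_h$ is already a discrete function in $\boldsymbol{V}_{h0}^d$, the nodal DoFs of $\curl(\mathcal{I}_{h0}^{gc}\boldsymbol{z})-\curl\boldsymbol{z}$ all vanish and hence $\curl(\mathcal{I}_{h0}^{gc}\boldsymbol{z})=\curl\boldsymbol{v}_h$ \emph{exactly}; by the exactness of \eqref{discretecomplex}, $\boldsymbol{v}_h-\mathcal{I}_{h0}^{gc}\boldsymbol{z}\in\nabla V_h^g$, so the constraint gives $\|\boldsymbol{v}_h\|_0^2=(\boldsymbol{v}_h,\mathcal{I}_{h0}^{gc}\boldsymbol{z})$ with no remainder term at all; only the $L^2$-bound $\|\mathcal{I}_{h0}^{gc}\boldsymbol{z}\|_0\lesssim\|\boldsymbol{z}\|_1$ (Monk's estimate for the nodal N\'ed\'elec interpolation plus a scaling argument for the bubble correction) is then needed. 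To repair your proof you would need to replace $I_{h0}^{gc}\boldsymbol{v}_h$ by an interpolant that reproduces $\curl\boldsymbol{v}_h$ exactly so that the leftover is an exact discrete gradient killed by the orthogonality, which is precisely the paper's device.
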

\begin{proof}
Noting that $\curl\boldsymbol{v}_{h}\in \boldsymbol{H}_0(\div,\Omega)$, by the de Rham complex, there exists $\boldsymbol{v}\in\boldsymbol{H}_0^1(\Omega;\mathbb R^3)$ such that
\begin{equation}\label{eq:20211223}
\curl\boldsymbol{v}=\curl\boldsymbol{v}_{h},\quad \|\boldsymbol{v}\|_1\lesssim \|\curl\boldsymbol{v}_{h}\|_0.
\end{equation}
Let $\mathcal{I}_{h0}^{gc}\boldsymbol{v}\in\boldsymbol{W}_{h0}$ be the nodal interpolation of $\boldsymbol{v}$ based on DoFs \eqref{Wkdof1}-\eqref{Wkdof3}, and $\mathcal{I}_{h}^{ND}\boldsymbol{v}\in\boldsymbol{V}_{h}^{ND}$ be the nodal interpolation of $\boldsymbol{v}$ based on DoFs \eqref{Wkdof1}-\eqref{Wkdof2}.
Since $\boldsymbol{v}\in\boldsymbol{H}_0^1(\Omega;\mathbb R^3)$ and $\curl\boldsymbol{v}\in\boldsymbol{V}_{h0}^{d}$, both $\mathcal{I}_{h0}^{gc}\boldsymbol{v}$ and $\mathcal{I}_{h}^{ND}\boldsymbol{v}$ are well-defined.
According to (5.43) in \cite{Monk2003}, we have
\[
\|\mathcal{I}_{h}^{ND}\boldsymbol{v}\|_0\lesssim \|\boldsymbol{v}\|_1, \quad \|\curl(\boldsymbol{v}-\mathcal{I}_{h}^{ND}\boldsymbol{v})\|_{0,\partial K}\lesssim h_K^{1/2}|\curl\boldsymbol{v}|_{1,K}\quad\forall~K\in\mathcal T_h.
\]
Employing the scaling argument and the inverse inequality \cite{Ciarlet1978}, it follows
\begin{align*}
\|\mathcal{I}_{h0}^{gc}\boldsymbol{v}-\mathcal{I}_{h}^{ND}\boldsymbol{v}\|_{0,K}&\lesssim h_K^{3/2}\|\curl(\boldsymbol{v}-\mathcal{I}_{h}^{ND}\boldsymbol{v})\|_{0,\partial K} \\
&\lesssim h_K^{2}|\curl\boldsymbol{v}|_{1,K}\lesssim h_K\|\curl\boldsymbol{v}\|_{0,K}.
\end{align*}
Hence
\begin{equation}\label{eq:20211223-3}
\|\mathcal{I}_{h0}^{gc}\boldsymbol{v}\|_0\leq \|\mathcal{I}_{h0}^{gc}\boldsymbol{v}-\mathcal{I}_{h}^{ND}\boldsymbol{v}\|_0+\|\mathcal{I}_{h}^{ND}\boldsymbol{v}\|_0\lesssim \|\boldsymbol{v}\|_1.	
\end{equation}
By the definition of $\mathcal{I}_{h0}^{gc}\boldsymbol{v}$ and the integration by parts, we have
\[
\big(\big(\curl(\mathcal{I}_{h0}^{gc}\boldsymbol{v})-\curl\boldsymbol{v}\big)\cdot\boldsymbol{n}, q\big)_F=0\quad\forall~q \in \mathbb{P}_{1}(F), F\in\mathcal F_h,
\]
\[
\big(\big(\curl(\mathcal{I}_{h0}^{gc}\boldsymbol{v})-\curl\boldsymbol{v}\big)\times\boldsymbol{n}, \boldsymbol{q}\big)_F =0 \quad \forall~\boldsymbol{q}\in \boldsymbol{RT}(F), F\in\mathcal F_h. 
\]
Then we get from the fact $\curl(\mathcal{I}_{h0}^{gc}\boldsymbol{v})-\curl\boldsymbol{v}\in \boldsymbol{V}_{h0}^{d}$ that
\[
\curl(\mathcal{I}_{h0}^{gc}\boldsymbol{v})=\curl\boldsymbol{v}=\curl\boldsymbol{v}_h.
\]
Due to the finite element Stokes complex \eqref{discretecomplex},
$\boldsymbol{v}_h-\mathcal{I}_{h0}^{gc}\boldsymbol{v}\in \boldsymbol{W}_{h0}\cap\ker(\curl)=\nabla V_h^g$. Then we achieve from \eqref{eq:20211223-3} that
\[
\|\boldsymbol{v}_{h}\|_{0}^2=(\boldsymbol{v}_{h},\boldsymbol{v}_{h})=(\boldsymbol{v}_{h},\mathcal{I}_{h0}^{gc}\boldsymbol{v})\leq \|\boldsymbol{v}_{h}\|_0\|\mathcal{I}_{h0}^{gc}\boldsymbol{v}\|_0\lesssim \|\boldsymbol{v}_{h}\|_0\|\boldsymbol{v}\|_1.
\]
Finally \eqref{discretepoincare} follows from the last inequality and \eqref{eq:20211223}.
\end{proof}

\begin{theorem} 
We have the discrete stability
\begin{align}
\label{disstability}
&\|\boldsymbol{\psi}_{h}\|_{\varepsilon,h}+|\gamma_{h}|_{1} \\
\lesssim & \sup _{(\boldsymbol{v}_{h}, \mu_{h}) \in \boldsymbol{W}_{h0} \times V_{h}^{g}}\frac{\varepsilon^{2}a_{h}(\boldsymbol{\psi}_{h}, \boldsymbol{v}_{h})+b(\boldsymbol{\psi}_{h}, \boldsymbol{v}_{h})+(\boldsymbol{v}_{h}, \nabla\gamma_{h})+(\boldsymbol{\psi}_{h}, \nabla \mu_{h})}{\|\boldsymbol{v}_{h}\|_{\varepsilon,h}+\left|\mu_{h}\right|_{1}} \notag
\end{align}  
for any $\boldsymbol{\psi}_{h} \in \boldsymbol{W}_{h0}$ and $\gamma_{h} \in V_{h}^{g}$.
Then the mixed finite element method \eqref{dismixfor1}-\eqref{dismixfor2} is well-posed.
\end{theorem}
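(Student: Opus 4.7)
The plan is to derive the inf-sup condition \eqref{disstability} by verifying the two standard Babu\v{s}ka-Brezzi conditions (inf-sup for $c(\cdot,\cdot)$ and coercivity on the discrete kernel) and then assembling them in the usual way. Both pieces are tailor-made from results established earlier in this section: the exact complex \eqref{discretecomplex} will hand us the inf-sup for $c$ \emph{for free}, while Lemma~\ref{lem:discretepoincare} (the discrete Poincar\'e inequality) will yield the coercivity on $\mathcal{K}_{h0}^d$. No new structural lemma should be needed.

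First I would establish the discrete inf-sup condition: for any $\mu_h\in V_h^g$, choose the test function $\boldsymbol{v}_h:=\nabla\mu_h$. Since \eqref{discretecomplex} is exact we have $\nabla V_h^g\subset\boldsymbol{W}_{h0}$, so this choice is admissible. Because $\curl(\nabla\mu_h)=0$, the discrete norm reduces to $\|\boldsymbol{v}_h\|_{\varepsilon,h}=\|\nabla\mu_h\|_0=|\mu_h|_1$, while $c(\boldsymbol{v}_h,\mu_h)=|\mu_h|_1^2$. Therefore
\[
\sup_{\boldsymbol{v}_h\in\boldsymbol{W}_{h0}}\frac{c(\boldsymbol{v}_h,\mu_h)}{\|\boldsymbol{v}_h\|_{\varepsilon,h}}\geq |\mu_h|_1,
\]
which is the discrete inf-sup for $c$ with constant $1$, uniform in $\varepsilon$ and $h$.

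Next I would verify the coercivity of $\varepsilon^2 a_h+b$ on the discrete kernel $\mathcal{K}_{h0}^d$. For $\boldsymbol{\psi}_h\in\mathcal{K}_{h0}^d$, Lemma~\ref{lem:discretepoincare} gives $\|\boldsymbol{\psi}_h\|_0^2\lesssim\|\curl\boldsymbol{\psi}_h\|_0^2=b(\boldsymbol{\psi}_h,\boldsymbol{\psi}_h)$. Combining with the identity $\varepsilon^2 a_h(\boldsymbol{\psi}_h,\boldsymbol{\psi}_h)=\varepsilon^2|\curl\boldsymbol{\psi}_h|_{1,h}^2$ yields
\[
\|\boldsymbol{\psi}_h\|_{\varepsilon,h}^2=\|\boldsymbol{\psi}_h\|_0^2+\|\curl\boldsymbol{\psi}_h\|_0^2+\varepsilon^2|\curl\boldsymbol{\psi}_h|_{1,h}^2\lesssim \varepsilon^2 a_h(\boldsymbol{\psi}_h,\boldsymbol{\psi}_h)+b(\boldsymbol{\psi}_h,\boldsymbol{\psi}_h),
\]
with a constant independent of $\varepsilon$ and $h$, which is exactly the uniform kernel coercivity needed.

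Combined with the boundedness of $\varepsilon^2 a_h+b$ and of $c$ in the $\|\cdot\|_{\varepsilon,h}$--$|\cdot|_1$ norms (already recorded above the theorem), the Babu\v{s}ka-Brezzi theory \cite{BoffiBrezziFortin2013} directly yields \eqref{disstability}, and then well-posedness of \eqref{dismixfor1}-\eqref{dismixfor2} is immediate. I do not anticipate a genuine obstacle here: the only subtle point is noticing that $\nabla V_h^g$ lives inside $\boldsymbol{W}_{h0}$ thanks to the exactness of the discrete complex, and that $\curl\nabla=0$ makes $\|\nabla\mu_h\|_{\varepsilon,h}$ collapse to $|\mu_h|_1$ independently of $\varepsilon$; both ensure that all constants remain uniform in the perturbation parameter.
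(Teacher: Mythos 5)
Your proposal is correct and follows essentially the same route as the paper: coercivity of $\varepsilon^{2}a_{h}+b$ on $\mathcal{K}_{h0}^{d}$ via the discrete Poincar\'e inequality of Lemma~\ref{lem:discretepoincare}, the inf-sup condition for $c$ obtained by testing with $\nabla\mu_{h}\in\nabla V_{h}^{g}\subseteq\boldsymbol{W}_{h0}$ so that $\|\nabla\mu_{h}\|_{\varepsilon,h}=|\mu_{h}|_{1}$, and then the Babu\v{s}ka--Brezzi theory. No gaps.
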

\begin{proof}
By \eqref{discretepoincare}, we derive the coercivity
\[
\|\boldsymbol{v}_{h}\|_{\varepsilon,h}^2 \lesssim \varepsilon^{2}a_{h}(\boldsymbol{v}_{h}, \boldsymbol{v}_{h})+b(\boldsymbol{v}_{h}, \boldsymbol{v}_{h})\quad\forall~\boldsymbol{v}_{h}\in\mathcal{K}_{h0}^{d}.
\]
Since $\nabla V_{h}^{g} \subseteq \boldsymbol{W}_{h0}$ by the finite element Stokes complex \eqref{discretecomplex}, it holds the discrete inf-sup condition
\begin{equation*}
|\mu_{h}|_{1}=\sup _{\boldsymbol{v}_{h} \in \nabla V_{h}^{g}} \frac{\left(\boldsymbol{v}_{h}, \nabla \mu_{h}\right)}{\left\|\boldsymbol{v}_{h}\right\|_{0}}
=\sup _{\boldsymbol{v}_{h} \in \nabla V_{h}^{g}} \frac{\left(\boldsymbol{v}_{h}, \nabla \mu_{h}\right)}{\|\boldsymbol{v}_{h}\|_{\varepsilon,h}} 
\leq \sup _{\boldsymbol{v}_{h} \in \boldsymbol{W}_{h0}} \frac{\left(\boldsymbol{v}_{h}, \nabla \mu_{h}\right)}{\|\boldsymbol{v}_{h}\|_{\varepsilon,h}}.
\end{equation*}
Thus the discrete stability \eqref{disstability} follows from the Babu\v{s}ka-Brezzi theory.
\end{proof}

By taking $\boldsymbol{v}_h=\nabla\lambda_h$ in \eqref{dismixfor1}, we get $\lambda_h= 0$. Thus \eqref{dismixfor1} is reduced to 
\begin{align}\label{nolambda}
\varepsilon^{2}a_{h}\left( \boldsymbol{u}_{h0},  \boldsymbol{v}_{h}\right)+b\left( \boldsymbol{u}_{h0},  \boldsymbol{v}_{h}\right)=\left(\boldsymbol{f}, \boldsymbol{v}_{h}\right) \quad \forall~\boldsymbol{v}_{h} \in \boldsymbol{W}_{h0}.
\end{align}

\subsection{Interpolation error estimates}
We will derive some interpolation error estimates for later uses. 
\begin{lemma}
We have
\begin{equation}\label{errorestimateIhd1}
\|\boldsymbol{v}-I_{h}^{d}\boldsymbol{v}\|_{0}+h|\boldsymbol{v}-I_{h}^{d}\boldsymbol{v}|_{1,h}\lesssim h^j|\boldsymbol{v}|_{j}\;\;\;\forall~\boldsymbol{v}\in\boldsymbol{H}_0(\div,\Omega)\cap\boldsymbol{H}^{j}(\Omega; \mathbb{R}^{3}), j=1,2,
\end{equation}
\begin{equation}\label{errorestimateIhd2}
\|I_{h}^{d}\boldsymbol{v}\|_{0}\lesssim \|\boldsymbol{v}\|_{0}+h\|\div\boldsymbol{v}\|_{0}\quad\forall~\boldsymbol{v}\in\boldsymbol{H}_0(\div,\Omega)\cap \boldsymbol{H}^{1}(\Omega; \mathbb{R}^{3}),
\end{equation}
\begin{equation}\label{errorestimateIh0d1}
\|\boldsymbol{v}-I_{h0}^{d}\boldsymbol{v}\|_{0}+h|\boldsymbol{v}-I_{h0}^{d}\boldsymbol{v}|_{1,h}\lesssim h^j|\boldsymbol{v}|_{j}\;\;\;\forall~\boldsymbol{v}\in\boldsymbol{H}_0^{1}(\Omega; \mathbb{R}^{3})\cap\boldsymbol{H}^{j}(\Omega; \mathbb{R}^{3}), j=1,2,
\end{equation}
\begin{equation}\label{errorestimateIh0d2}
\|I_{h0}^{d}\boldsymbol{v}\|_{0}\lesssim \|\boldsymbol{v}\|_{0}+h\|\div\boldsymbol{v}\|_{0}\quad\forall~\boldsymbol{v}\in\boldsymbol{H}_0(\div,\Omega)\cap \boldsymbol{H}^{1}(\Omega; \mathbb{R}^{3}).
\end{equation}
\end{lemma}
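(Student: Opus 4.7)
The plan is to prove all four estimates by comparing the new interpolants $I_h^d$ and $I_{h0}^d$ against the Falk--Winther projection $I_h^{BDM}$, for which \eqref{eq:IhBDMbound}--\eqref{eq:IhBDMestimate} are already in hand. I would start from the decomposition
\[
\boldsymbol{v} - I_h^d\boldsymbol{v} = (\boldsymbol{v} - I_h^{BDM}\boldsymbol{v}) + (I_h^{BDM}\boldsymbol{v} - I_h^d\boldsymbol{v})
\]
and treat the two pieces separately. The first summand is controlled elementwise by \eqref{eq:IhBDMestimate} for the approximation estimates and by \eqref{eq:IhBDMbound} for the stability estimate. For the correction, observe that on each $K\in\mathcal{T}_h$ it lies in $\boldsymbol{V}^d(K)$, that by the very definition of $I_h^d$ all of its DoFs \eqref{Vskdof1} vanish, and that its DoFs \eqref{Vskdof2} on a face $F$ equal the moments of $(I_h^{BDM}\boldsymbol{v}|_K-\{I_h^{BDM}\boldsymbol{v}\})\times\boldsymbol{n}$ against $\boldsymbol{RT}(F)$. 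Thus the correction is driven entirely by tangential jumps of $I_h^{BDM}\boldsymbol{v}$ across interior faces.

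The technical core is the scaling estimate, which combined with unisolvence of \eqref{Vskdof1}--\eqref{Vskdof2} on $\boldsymbol{V}^d(K)$ gives
\[
\|I_h^{BDM}\boldsymbol{v} - I_h^d\boldsymbol{v}\|_{0,K}^2 \lesssim h_K \sum_{F \in \mathcal{F}(K)} \|J_F\|_{0,F}^2,
\]
where $J_F$ is the tangential contribution on $F$ described above. Since $\boldsymbol{v}\in\boldsymbol{H}^j(\Omega;\mathbb{R}^3)$ has continuous tangential trace across interior faces, I rewrite the jump of $I_h^{BDM}\boldsymbol{v}\times\boldsymbol{n}$ as a jump of $(I_h^{BDM}\boldsymbol{v}-\boldsymbol{v})\times\boldsymbol{n}$, apply the multiplicative trace inequality on the two elements sharing $F$, and invoke \eqref{eq:IhBDMestimate} to obtain $\|J_F\|_{0,F}\lesssim h_K^{j-1/2}|\boldsymbol{v}|_{j,\omega_F}$, whence $\|I_h^{BDM}\boldsymbol{v}-I_h^d\boldsymbol{v}\|_{0,K}\lesssim h_K^j|\boldsymbol{v}|_{j,\omega_K}$. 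Summing over $K$ and combining with the first summand yields the $L^2$ part of \eqref{errorestimateIhd1}; the broken $H^1$ part then follows from the inverse inequality on the local space $\boldsymbol{V}^d(K)$ applied to the correction plus \eqref{eq:IhBDMestimate} for the first summand. Estimate \eqref{errorestimateIhd2} is obtained by the same decomposition: the first summand is bounded by \eqref{eq:IhBDMbound}, and in the correction a trace plus inverse estimate applied to the finite-element function $I_h^{BDM}\boldsymbol{v}$ absorbs the face contribution into $\|I_h^{BDM}\boldsymbol{v}\|_0$, which is in turn handled by \eqref{eq:IhBDMbound}.

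For $I_{h0}^d$ the argument is identical, with one subtlety at the boundary: on faces $F\subset\partial\Omega$ the DoFs \eqref{Vskdof2} of the correction are not jumps but equal $-((I_h^{BDM}\boldsymbol{v})\times\boldsymbol{n},\boldsymbol{q})_F$. For \eqref{errorestimateIh0d1} the hypothesis $\boldsymbol{v}\in\boldsymbol{H}_0^1(\Omega;\mathbb{R}^3)$ gives $\boldsymbol{v}\times\boldsymbol{n}=\mathbf{0}$ on $\partial\Omega$, so $(I_h^{BDM}\boldsymbol{v})\times\boldsymbol{n}=(I_h^{BDM}\boldsymbol{v}-\boldsymbol{v})\times\boldsymbol{n}$ there and the multiplicative trace step carries over verbatim; this is precisely why \eqref{errorestimateIh0d1} requires $\boldsymbol{v}\in\boldsymbol{H}_0^1$ rather than merely $\boldsymbol{H}_0(\div)\cap\boldsymbol{H}^1$. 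For \eqref{errorestimateIh0d2} no such cancellation is needed: a discrete trace inequality on $I_h^{BDM}\boldsymbol{v}$ absorbs the boundary-face contribution into $\|I_h^{BDM}\boldsymbol{v}\|_0$, which is then bounded by \eqref{eq:IhBDMbound}. The main obstacle I expect is the bookkeeping, namely, tracking the $h$-scaling of the face-DoF correction from the reference tetrahedron and verifying that the patches $\omega_F$ arising from \eqref{eq:IhBDMestimate} overlap only a uniformly bounded number of times when summed over $K$; once this is pinned down, the four estimates fall out from a routine combination of trace, inverse, and Bramble--Hilbert inequalities.
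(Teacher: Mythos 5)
Your proposal is correct and follows essentially the same route as the paper: the same decomposition against $I_h^{BDM}$, the same observation that the correction lies in $\boldsymbol{V}^d(K)$ with vanishing normal DoFs and tangential DoFs driven by the jumps $\llbracket I_h^{BDM}\boldsymbol{v}\rrbracket$, the same scaling estimate $h_K^i|I_h^{BDM}\boldsymbol{v}-I_{h0}^d\boldsymbol{v}|_{i,K}\lesssim h_K^{1/2}\|\llbracket I_h^{BDM}\boldsymbol{v}\rrbracket\|_{0,\partial K}$ combined with the trace inequality and \eqref{eq:IhBDMbound}--\eqref{eq:IhBDMestimate}. You also correctly pinpoint the one subtlety the paper uses implicitly, namely that the hypothesis $\boldsymbol{v}\in\boldsymbol{H}_0^1(\Omega;\mathbb{R}^3)$ in \eqref{errorestimateIh0d1} is what lets one replace $\llbracket I_h^{BDM}\boldsymbol{v}\rrbracket$ by $\llbracket I_h^{BDM}\boldsymbol{v}-\boldsymbol{v}\rrbracket$ on boundary faces.
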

\begin{proof}
We only prove \eqref{errorestimateIh0d1} and \eqref{errorestimateIh0d2}, since we can apply the similar argument to prove \eqref{errorestimateIhd1} and \eqref{errorestimateIhd2}.

For $\boldsymbol{v}\in\boldsymbol{H}_0^{1}(\Omega; \mathbb{R}^{3})\cap\boldsymbol{H}^{j}(\Omega; \mathbb{R}^{3})$ and $i=0,1$, we get from the scaling argument that
\[
h_K^i|I_{h}^{BDM}\boldsymbol{v}-I_{h0}^{d}\boldsymbol{v}|_{i,K} \lesssim h_K^{1/2}\|\llbracket I_{h}^{BDM}\boldsymbol{v}\rrbracket\|_{0,\partial K} = h_K^{1/2}\|\llbracket I_{h}^{BDM}\boldsymbol{v}-\boldsymbol{v}\rrbracket\|_{0,\partial K},
\]
which together with the trace inequality yields
\begin{align*}
 & \quad \; |\boldsymbol{v}-I_{h0}^{d}\boldsymbol{v}|_{i,h}^2 \lesssim  |\boldsymbol{v}-I_{h}^{BDM}\boldsymbol{v}|_{i,h}^2 + \sum_{K\in\mathcal T_h}h_K^{1-2i}\|\llbracket I_{h}^{BDM}\boldsymbol{v}-\boldsymbol{v}\rrbracket\|_{0,\partial K}^2 \\
& \lesssim  |\boldsymbol{v}-I_{h}^{BDM}\boldsymbol{v}|_{i,h}^2 + \sum_{K\in\mathcal T_h}h_K^{-2i}(\|\boldsymbol{v}-I_{h}^{BDM}\boldsymbol{v}\|_{0, K}^2+h_K^2|\boldsymbol{v}-I_{h}^{BDM}\boldsymbol{v}|_{1, K}^2).
\end{align*}
Then \eqref{errorestimateIh0d1} holds from \eqref{eq:IhBDMestimate}.

For $\boldsymbol{v}\in\boldsymbol{H}_0(\div,\Omega)\cap \boldsymbol{H}^{1}(\Omega; \mathbb{R}^{3})$, similarly we have
\begin{align*}
\|I_{h}^{BDM}\boldsymbol{v}-I_{h0}^{d}\boldsymbol{v}\|_{0}^2 & \lesssim \sum_{K\in\mathcal T_h}h_K\|\llbracket I_{h}^{BDM}\boldsymbol{v}\rrbracket\|_{0,\partial K}^2 \lesssim \|I_{h}^{BDM}\boldsymbol{v}\|_{0}^2.
\end{align*}
Then
\[
\|I_{h0}^{d}\boldsymbol{v}\|_{0}\leq \|I_{h}^{BDM}\boldsymbol{v}\|_{0}+\|I_{h}^{BDM}\boldsymbol{v}-I_{h0}^{d}\boldsymbol{v}\|_{0}\lesssim \|I_{h}^{BDM}\boldsymbol{v}\|_{0}.
\]
Then \eqref{errorestimateIh0d2} holds from \eqref{eq:IhBDMbound}.
\end{proof}

\begin{lemma}
Assume $\curl\boldsymbol{u}_0\in H^s(\Omega;\mathbb R^3)$ with $1\leq s\leq 2$.
We have
\begin{equation}\label{errorestimateIhd3}
\|\curl\boldsymbol{u}-I_{h}^{d}(\curl\boldsymbol{u})\|_{0}\lesssim \varepsilon^{r-1/2}h^{1-r}\|\boldsymbol{f}\|_0+ h^s|\curl\boldsymbol{u}_0|_{s},
\end{equation}
\begin{equation}\label{errorestimateIhd4}
\varepsilon |\curl\boldsymbol{u}-I_{h}^{d}(\curl\boldsymbol{u})|_{1,h}\lesssim \varepsilon^{r-1/2}h^{1-r}\|\boldsymbol{f}\|_0
\end{equation}
with $0\leq r\leq 1$.
\end{lemma}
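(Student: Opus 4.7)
The plan is to exploit the splitting $\curl\boldsymbol{u}=\curl\boldsymbol{u}_{0}+\curl(\boldsymbol{u}-\boldsymbol{u}_{0})$, combining the operator-interpolation between the two bounds supplied by \eqref{errorestimateIhd1}--\eqref{errorestimateIhd2} with the sharp $\varepsilon$-dependent regularity estimates \eqref{regularity} and \eqref{eq:regularityu0}. The common device is the elementary inequality: if $X\leq A$ and $X\leq B$, then $X\leq A^{1-r}B^{r}$ for $0\leq r\leq 1$. Applied to two competing bounds—one that is $h$-dominated but $\varepsilon^{-\text{something}}$-blowing, the other that is $\varepsilon$-small but $h$-uniform—this delivers the half-integer $\varepsilon^{1/2}$ boundary-layer scaling in the target estimates.

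For \eqref{errorestimateIhd3}, first decompose $\curl\boldsymbol{u}-I_{h}^{d}(\curl\boldsymbol{u})=\big(\curl\boldsymbol{u}_{0}-I_{h}^{d}(\curl\boldsymbol{u}_{0})\big)+\big(\curl(\boldsymbol{u}-\boldsymbol{u}_{0})-I_{h}^{d}(\curl(\boldsymbol{u}-\boldsymbol{u}_{0}))\big)$. For the $\boldsymbol{u}_{0}$-summand, \eqref{errorestimateIhd1} at $j=1,2$ together with standard Sobolev interpolation between integer regularities yields the contribution $h^{s}|\curl\boldsymbol{u}_{0}|_{s}$ for $1\leq s\leq 2$. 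For the perturbation summand I derive two competing bounds. The first is $\lesssim h\,|\curl(\boldsymbol{u}-\boldsymbol{u}_{0})|_{1}\lesssim h\varepsilon^{-1/2}\|\boldsymbol{f}\|_{0}$, obtained from \eqref{errorestimateIhd1} with $j=1$ combined with $|\curl\boldsymbol{u}|_{1}\lesssim\varepsilon^{-1/2}\|\boldsymbol{f}\|_{0}$ from \eqref{regularity} and $|\curl\boldsymbol{u}_{0}|_{1}\lesssim\|\boldsymbol{f}\|_{0}$ from \eqref{eq:regularityu0}. The second is $\lesssim\|\curl(\boldsymbol{u}-\boldsymbol{u}_{0})\|_{0}\lesssim\varepsilon^{1/2}\|\boldsymbol{f}\|_{0}$, obtained by the triangle inequality, the $L^{2}$-stability \eqref{errorestimateIhd2} applied to the divergence-free field $\curl(\boldsymbol{u}-\boldsymbol{u}_{0})$, and $\|\boldsymbol{u}-\boldsymbol{u}_{0}\|_{1}\lesssim\varepsilon^{1/2}\|\boldsymbol{f}\|_{0}$ from \eqref{regularity}. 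Interpolating the two bounds with parameter $r\in[0,1]$ produces $\varepsilon^{r-1/2}h^{1-r}\|\boldsymbol{f}\|_{0}$.

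For \eqref{errorestimateIhd4} the $\boldsymbol{u}_{0}$-splitting is unnecessary (since no $h^{s}|\curl\boldsymbol{u}_{0}|_{s}$ term appears on the right), so I apply \eqref{errorestimateIhd1} directly to $\curl\boldsymbol{u}$ at $j=1,2$, invoking \eqref{regularity} to get $|\curl\boldsymbol{u}-I_{h}^{d}(\curl\boldsymbol{u})|_{1,h}\lesssim|\curl\boldsymbol{u}|_{1}\lesssim\varepsilon^{-1/2}\|\boldsymbol{f}\|_{0}$ and $|\curl\boldsymbol{u}-I_{h}^{d}(\curl\boldsymbol{u})|_{1,h}\lesssim h|\curl\boldsymbol{u}|_{2}\lesssim h\varepsilon^{-3/2}\|\boldsymbol{f}\|_{0}$. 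Interpolation yields $\lesssim h^{1-r}\varepsilon^{r-3/2}\|\boldsymbol{f}\|_{0}$, and multiplication by $\varepsilon$ gives the claim.

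The interpolation step is routine; the only point that needs a brief justification is that $\curl(\boldsymbol{u}-\boldsymbol{u}_{0})$ is divergence-free (clear from $\div\circ\curl\equiv 0$) so that \eqref{errorestimateIhd2} gives the clean bound $\|I_{h}^{d}(\curl(\boldsymbol{u}-\boldsymbol{u}_{0}))\|_{0}\lesssim\|\curl(\boldsymbol{u}-\boldsymbol{u}_{0})\|_{0}$ without a residual $h\|\div\cdot\|_{0}$ term. The main conceptual observation is recognizing that the boundary-layer scale $\varepsilon^{1/2}$ enters only through the corrector $\boldsymbol{u}-\boldsymbol{u}_{0}$ and not through the reduced solution $\boldsymbol{u}_{0}$, which is precisely why the splitting is needed to produce the sharp $\varepsilon^{r-1/2}$ prefactor in \eqref{errorestimateIhd3}.
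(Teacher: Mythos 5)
Your proposal is correct and follows essentially the same route as the paper: split $\curl\boldsymbol{u}$ into $\curl\boldsymbol{u}_0$ plus the corrector for \eqref{errorestimateIhd3}, bound the corrector by interpolating between the $L^2$-stability of $I_h^d$ (the divergence-free observation removes the $h\|\div\cdot\|_0$ term, as you note) and the first-order estimate, and prove \eqref{errorestimateIhd4} without splitting by interpolating \eqref{errorestimateIhd1} between $j=1$ and $j=2$ and invoking \eqref{regularity}. The only cosmetic difference is that the paper interpolates at the level of the norms, writing $h^{1-r}\|\cdot\|_0^r|\cdot|_1^{1-r}$, whereas you interpolate the two final numerical bounds; these are equivalent.
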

\begin{proof}
It follows from \eqref{errorestimateIhd1}-\eqref{errorestimateIhd2} and \eqref{eq:regularityu0}-\eqref{regularity} that
\begin{align*}
\|\curl(\boldsymbol{u}-\boldsymbol{u}_0)-I_{h}^{d}(\curl(\boldsymbol{u}-\boldsymbol{u}_0))\|_{0}&\lesssim h^{1-r}\|\curl(\boldsymbol{u}-\boldsymbol{u}_0)\|_0^r|\curl(\boldsymbol{u}-\boldsymbol{u}_0)|_1^{1-r} \\
&\lesssim \varepsilon^{r-1/2}h^{1-r}\|\boldsymbol{f}\|_0.
\end{align*}
Apply \eqref{errorestimateIhd1} to get
\[
\|\curl\boldsymbol{u}_0-I_{h}^{d}(\curl\boldsymbol{u}_0)\|_{0}\lesssim h^s|\curl\boldsymbol{u}_0|_{s}.
\]
Hence \eqref{errorestimateIhd3} holds from the last two inequalities.

Employing \eqref{errorestimateIhd1} again, we get 
\[
|\curl\boldsymbol{u}-I_{h}^{d}(\curl\boldsymbol{u})|_{1,h}\lesssim h^{1-r}|\curl\boldsymbol{u}|_1^r|\curl\boldsymbol{u}|_2^{1-r},
\]
which combined with \eqref{regularity} produces \eqref{errorestimateIhd4}.
\end{proof}

\begin{lemma}
We have
\begin{equation}\label{errorestimateIh0d3}
\|\curl\boldsymbol{u}-I_{h0}^{d}(\curl\boldsymbol{u})\|_{0}+\varepsilon |\curl\boldsymbol{u}-I_{h0}^{d}(\curl\boldsymbol{u})|_{1,h}\lesssim h^{1/2}\|\boldsymbol{f}\|_0.
\end{equation}
\end{lemma}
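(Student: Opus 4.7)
The plan is to split the proof into two regimes, $h\leq\varepsilon$ and $h\geq\varepsilon$, using direct interpolation estimates in the former and a decomposition via the reduced problem \eqref{reduced} in the latter. The weighted $H^1$-semi-norm piece and the $L^2$ piece of \eqref{errorestimateIh0d3} are handled separately, and both are bounded by $h^{1/2}\|\boldsymbol{f}\|_0$ uniformly in $\varepsilon$.

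For the weighted $H^1$-semi-norm piece, since $\curl\boldsymbol{u}\in\boldsymbol{H}_0^1(\Omega;\mathbb R^3)\cap\boldsymbol{H}^2(\Omega;\mathbb R^3)$, both cases of \eqref{errorestimateIh0d1} apply. Combined with the regularity bounds $|\curl\boldsymbol{u}|_1\lesssim\varepsilon^{-1/2}\|\boldsymbol{f}\|_0$ and $|\curl\boldsymbol{u}|_2\lesssim\varepsilon^{-3/2}\|\boldsymbol{f}\|_0$ from \eqref{regularity}, the $j=1$ estimate yields $\varepsilon|\curl\boldsymbol{u}-I_{h0}^d(\curl\boldsymbol{u})|_{1,h}\lesssim\varepsilon^{1/2}\|\boldsymbol{f}\|_0$ and the $j=2$ estimate yields $h\varepsilon^{-1/2}\|\boldsymbol{f}\|_0$. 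Taking the smaller (via $\min(a,b)\leq\sqrt{ab}$, or equivalently by distinguishing the two regimes) gives $h^{1/2}\|\boldsymbol{f}\|_0$ as required.

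For the $L^2$ piece, the regime $h\leq\varepsilon$ follows at once from \eqref{errorestimateIh0d1} with $j=1$, since $h|\curl\boldsymbol{u}|_1\lesssim h\varepsilon^{-1/2}\|\boldsymbol{f}\|_0\leq h^{1/2}\|\boldsymbol{f}\|_0$ in that range. In the regime $h\geq\varepsilon$, I would decompose
\[
\curl\boldsymbol{u}-I_{h0}^d(\curl\boldsymbol{u})=\bigl(\curl\boldsymbol{u}_0-I_{h0}^d(\curl\boldsymbol{u}_0)\bigr)+\bigl(\curl(\boldsymbol{u}-\boldsymbol{u}_0)-I_{h0}^d(\curl(\boldsymbol{u}-\boldsymbol{u}_0))\bigr).
\]
The correction piece is bounded by $\|\curl(\boldsymbol{u}-\boldsymbol{u}_0)\|_0+\|I_{h0}^d(\curl(\boldsymbol{u}-\boldsymbol{u}_0))\|_0\lesssim\|\curl(\boldsymbol{u}-\boldsymbol{u}_0)\|_0\lesssim\varepsilon^{1/2}\|\boldsymbol{f}\|_0\leq h^{1/2}\|\boldsymbol{f}\|_0$, where I use the $L^2$ stability \eqref{errorestimateIh0d2} (applicable since $\curl(\boldsymbol{u}-\boldsymbol{u}_0)\in\boldsymbol{H}_0(\div,\Omega)\cap\boldsymbol{H}^1(\Omega;\mathbb R^3)$ with vanishing divergence) and the estimate $\|\curl(\boldsymbol{u}-\boldsymbol{u}_0)\|_0\lesssim\varepsilon^{1/2}\|\boldsymbol{f}\|_0$ from \eqref{regularity}.

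The main difficulty, and the real work of the lemma, is bounding $\|\curl\boldsymbol{u}_0-I_{h0}^d(\curl\boldsymbol{u}_0)\|_0$ despite $\curl\boldsymbol{u}_0$ carrying no prescribed tangential trace on $\partial\Omega$. I would triangulate through the canonical BDM interpolant,
\[
\|\curl\boldsymbol{u}_0-I_{h0}^d(\curl\boldsymbol{u}_0)\|_0\leq\|\curl\boldsymbol{u}_0-I_h^{BDM}(\curl\boldsymbol{u}_0)\|_0+\|I_h^{BDM}(\curl\boldsymbol{u}_0)-I_{h0}^d(\curl\boldsymbol{u}_0)\|_0,
\]
bounding the first term by $h|\curl\boldsymbol{u}_0|_1\lesssim h\|\boldsymbol{f}\|_0$ via \eqref{eq:IhBDMestimate} and \eqref{eq:regularityu0}. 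For the second term I would replay the elementwise scaling argument used in the proof of \eqref{errorestimateIh0d1}: on each element $K$, the difference is controlled by $h_K^{1/2}$ times the tangential-trace DoF defects, which split into interior-face jumps of $I_h^{BDM}(\curl\boldsymbol{u}_0)-\curl\boldsymbol{u}_0$ (contributing $h\|\boldsymbol{f}\|_0$ after summation) and boundary-face traces of $I_h^{BDM}(\curl\boldsymbol{u}_0)\times\boldsymbol{n}$. The latter, after adding and subtracting $\curl\boldsymbol{u}_0$ and invoking the multiplicative trace inequality $\|\curl\boldsymbol{u}_0\|_{0,\partial\Omega}^2\lesssim\|\curl\boldsymbol{u}_0\|_0\|\curl\boldsymbol{u}_0\|_1\lesssim\|\boldsymbol{f}\|_0^2$ together with \eqref{eq:regularityu0}, yields the boundary-layer contribution $h^{1/2}\|\boldsymbol{f}\|_0$. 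Assembling these pieces closes the case $h\geq\varepsilon$ and completes the proof.
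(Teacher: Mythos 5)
Your proof is correct, but it is organized differently from the paper's. The paper never touches $I_{h0}^{d}$ directly on $\curl\boldsymbol{u}_0$: it first establishes estimates for the unconstrained interpolant $I_h^{d}$ (equations \eqref{errorestimateIhd3}--\eqref{errorestimateIhd4}, obtained by the splitting $\curl\boldsymbol{u}=\curl\boldsymbol{u}_0+\curl(\boldsymbol{u}-\boldsymbol{u}_0)$ together with norm interpolation in the parameter $r$, then specialized to $r=1/2$), and then controls the purely boundary-supported difference $I_h^{d}(\curl\boldsymbol{u})-I_{h0}^{d}(\curl\boldsymbol{u})$ by a scaling argument over $\mathcal F_h^{\partial}$; since $\curl\boldsymbol{u}\in\boldsymbol{H}_0^1(\Omega;\mathbb R^3)$, the boundary defect there is $(I_h^{BDM}(\curl\boldsymbol{u})-\curl\boldsymbol{u})\times\boldsymbol{n}$, which is small, and the multiplicative trace inequality is applied only to the interpolation error of $\curl(\boldsymbol{u}-\boldsymbol{u}_0)$. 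You instead split into the regimes $h\le\varepsilon$ and $h\ge\varepsilon$, handle the weighted $H^1$ piece very cleanly by playing the $j=1$ and $j=2$ cases of \eqref{errorestimateIh0d1} against the regularity bounds (this is genuinely simpler than the paper's route for that term), use the $L^2$-stability \eqref{errorestimateIh0d2} crudely on the correction $\curl(\boldsymbol{u}-\boldsymbol{u}_0)$ (which suffices only because $\varepsilon\le h$ in that regime), and then confront the nonzero tangential boundary trace of $\curl\boldsymbol{u}_0$ head-on via the multiplicative trace inequality $\|\curl\boldsymbol{u}_0\|_{0,\partial\Omega}^2\lesssim\|\curl\boldsymbol{u}_0\|_0\|\curl\boldsymbol{u}_0\|_1$. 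Both arguments ultimately rest on the same two ingredients --- the regularity splitting through $\boldsymbol{u}_0$ and a boundary-face trace estimate --- but your version buys a more transparent source for the $h^{1/2}$ loss (it is visibly the $O(h^{1/2})$ boundary layer created by zeroing the tangential DoFs of $\curl\boldsymbol{u}_0$ on $\partial\Omega$, plus the $\varepsilon^{1/2}$ from the correction), at the cost of a case distinction and a slightly more delicate elementwise scaling step that the paper sidesteps by always interpolating functions with vanishing boundary trace.
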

\begin{proof}
With the help of the scaling argument, 
\[
\|I_{h}^{d}(\curl\boldsymbol{u})-I_{h0}^{d}(\curl\boldsymbol{u})\|_{0}^2\lesssim \sum_{F\in\mathcal F_h^{\partial}}h_F\|\curl\boldsymbol{u}-I_{h}^{BDM}(\curl\boldsymbol{u})\|_{0,F}^2.
\]
On the other hand,
by the multiplicative trace inequality, \eqref{eq:IhBDMestimate} and \eqref{eq:regularityu0}-\eqref{regularity},
we acquire
\begin{align*}
&\quad \sum_{F\in\mathcal F_h^{\partial}}h_F\|\curl(\boldsymbol{u}-\boldsymbol{u}_0)-I_{h}^{BDM}(\curl(\boldsymbol{u}-\boldsymbol{u}_0))\|_{0,F}^2 \\
&\lesssim h\|\curl(\boldsymbol{u}-\boldsymbol{u}_0)\|_{0}|\curl(\boldsymbol{u}-\boldsymbol{u}_0)|_{1}\lesssim h\|\boldsymbol{f}\|_0^2,
\end{align*}
\[
\sum_{F\in\mathcal F_h^{\partial}}h_F\|\curl\boldsymbol{u}_0-I_{h}^{BDM}(\curl\boldsymbol{u}_0)\|_{0,F}^2\lesssim h^2|\curl\boldsymbol{u}_0|_{1}^2\lesssim h^2\|\boldsymbol{f}\|_0^2.
\]
Hence
\[
\|I_{h}^{d}(\curl\boldsymbol{u})-I_{h0}^{d}(\curl\boldsymbol{u})\|_{0}\lesssim h^{1/2}\|\boldsymbol{f}\|_0.
\]
Combining the last inequality and \eqref{errorestimateIhd3} with $r=1/2$ and $s=1$ we derive
\begin{equation}\label{errorestimateIh0d30}
\|\curl\boldsymbol{u}-I_{h0}^{d}(\curl\boldsymbol{u})\|_{0}\lesssim h^{1/2}\|\boldsymbol{f}\|_0.
\end{equation}

Applying the scaling argument, the trace inequality, \eqref{eq:IhBDMestimate} and \eqref{regularity} again, 
\begin{align*}
|I_{h}^{d}(\curl\boldsymbol{u})-I_{h0}^{d}(\curl\boldsymbol{u})|_{1,h}^2&\lesssim \sum_{F\in\mathcal{F}_h^{\partial}}h_F^{-1}\|\curl\boldsymbol{u}-I_{h}^{BDM}(\curl\boldsymbol{u})\|_{0,F}^2 \\
&\lesssim h|\curl\boldsymbol{u}|_1|\curl\boldsymbol{u}|_2\lesssim \varepsilon^{-2}h\|\boldsymbol{f}\|_0^2
\end{align*}
Then we get from \eqref{errorestimateIhd4} with $r=1/2$ that
\[
\varepsilon|\curl\boldsymbol{u}-I_{h0}^{d}(\curl\boldsymbol{u})|_{1,h}\lesssim h^{1/2}\|\boldsymbol{f}\|_0,
\]
which together with \eqref{errorestimateIh0d30} implies \eqref{errorestimateIh0d3}.
\end{proof}

\begin{lemma}
The following estimates hold 
\begin{equation}\label{errorestimateIhgc1}
\|\boldsymbol{v}-I_{h}^{gc}\boldsymbol{v}\|_{0}\lesssim h^j|\boldsymbol{v}|_{j}\quad\forall~\boldsymbol{v}\in\boldsymbol{H}_0(\curl,\Omega)\cap\boldsymbol{H}^{j}(\Omega; \mathbb{R}^{3}), j=2,k+1,
\end{equation}
\begin{equation}\label{errorestimateIhgc2}
\|\boldsymbol{v}-I_{h}^{gc}\boldsymbol{v}\|_{0}\lesssim h|\boldsymbol{v}|_{1}\quad\forall~\boldsymbol{v}\in\boldsymbol{H}_0(\curl,\Omega)\cap\boldsymbol{H}^{1}(\Omega; \mathbb{R}^{3}).
\end{equation}
\begin{equation}\label{errorestimateIh0gc1}
\|\boldsymbol{v}-I_{h0}^{gc}\boldsymbol{v}\|_{0}\lesssim h^j|\boldsymbol{v}|_{j}\quad\forall~\boldsymbol{v}\in\boldsymbol{H}_0(\grad\curl, \Omega)\cap\boldsymbol{H}^{j}(\Omega; \mathbb{R}^{3}), j=2,k+1,
\end{equation}
\begin{equation}\label{errorestimateIh0gc2}
\|\boldsymbol{v}-I_{h0}^{gc}\boldsymbol{v}\|_{0}\lesssim h|\boldsymbol{v}|_{1}\quad\forall~\boldsymbol{v}\in\boldsymbol{H}_0(\curl,\Omega)\cap\boldsymbol{H}^{1}(\Omega; \mathbb{R}^{3}).
\end{equation}
\end{lemma}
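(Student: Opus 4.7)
The plan is to decompose via the triangle inequality
\[
\|\boldsymbol v - I_h^{gc}\boldsymbol v\|_0 \le \|\boldsymbol v - I_h^{ND}\boldsymbol v\|_0 + \|I_h^{ND}\boldsymbol v - I_h^{gc}\boldsymbol v\|_0
\]
(and analogously for $I_{h0}^{gc}$), dispose of the first summand immediately via \eqref{eq:IhNDestimate}, and reduce the whole lemma to a sharp element-wise estimate of the discrepancy $I_h^{ND}\boldsymbol v - I_h^{gc}\boldsymbol v$.

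The key structural observation I would establish first is that, on every $K \in \mathcal T_h$, one has $(I_h^{ND}\boldsymbol v - I_h^{gc}\boldsymbol v)|_K \in b_K\mathbb P_1(K;\mathbb R^3)$. This is because the defining relations of $I_h^{gc}$ force DoFs \eqref{Wkdof1}--\eqref{Wkdof2} of the difference to vanish, and since $b_K$ vanishes on $\partial K$ those DoFs automatically vanish on $b_K\mathbb P_1(K;\mathbb R^3)$; combined with the N\'ed\'elec unisolvence and the direct-sum decomposition $\boldsymbol W_k(K) = \boldsymbol V_k^{ND}(K) \oplus b_K\mathbb P_1(K;\mathbb R^3)$, this forces the difference into the bubble summand. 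A scaling argument plus norm equivalence on this $12$-dimensional subspace then yields
\[
\|I_h^{ND}\boldsymbol v - I_h^{gc}\boldsymbol v\|_{0,K}^2 \lesssim h_K^3 \sum_{F \in \mathcal F(K)} \|\eta_F\|_{0,F}^2,
\]
where $\eta_F$ is the face datum that DoF \eqref{Wkdof3} is paired against. Reading off the defining relations of the interpolations: on an interior face $F$ one has $\eta_F = \pm\tfrac{1}{2}\llbracket \curl I_h^{ND}\boldsymbol v \times \boldsymbol n\rrbracket_F$; on a boundary face $\eta_F = \boldsymbol{0}$ for $I_h^{gc}$, whereas $\eta_F = -\curl I_h^{ND}\boldsymbol v \times \boldsymbol n|_F$ for $I_{h0}^{gc}$.

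For the smoother cases $j = 2, k+1$ I would rewrite these face data using the continuity of $\curl \boldsymbol v \times \boldsymbol n$ across interior faces (valid since $\curl\boldsymbol v \in \boldsymbol H^1$), together with the commutativity $\curl I_h^{ND}\boldsymbol v = I_h^{BDM}\curl\boldsymbol v$ from \eqref{eq:feec}, to replace the jump by $\llbracket (I_h^{BDM}\curl\boldsymbol v - \curl\boldsymbol v)\times \boldsymbol n\rrbracket$; for the $I_{h0}^{gc}$ boundary-face term the hypothesis $\boldsymbol v \in \boldsymbol H_0(\grad\curl, \Omega)$ provides $\curl\boldsymbol v|_{\partial\Omega} = \boldsymbol{0}$ and an analogous rewrite. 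The multiplicative trace inequality followed by \eqref{eq:IhBDMestimate} with $j=1$ or $j=2$ (as appropriate) will then deliver the sharp $h^j|\boldsymbol v|_j$ bound. For the rough case $j=1$ the jump rewrite is unavailable since $\curl \boldsymbol v$ need not possess a face trace; there I would instead apply the polynomial inverse-trace inequality $\|\curl I_h^{ND}\boldsymbol v\|_{0,F}^2 \lesssim h_K^{-1}\|\curl I_h^{ND}\boldsymbol v\|_{0,K}^2$ and combine it with \eqref{eq:IhBDMbound} to conclude $\|I_h^{ND}\boldsymbol v - I_h^{gc}\boldsymbol v\|_0 \lesssim h\|\curl\boldsymbol v\|_0 \lesssim h|\boldsymbol v|_1$. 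The most delicate step will be pinning down the correct scaling exponents in the bubble-space estimate and being careful about the averaging conventions in DoF \eqref{Wkdof3} on interior versus boundary faces; the remainder is a routine deployment of trace, inverse, and the already available interpolation estimates \eqref{eq:IhNDestimate}, \eqref{eq:IhBDMbound}, and \eqref{eq:IhBDMestimate}.
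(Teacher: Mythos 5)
Your proposal is correct and follows essentially the same route as the paper: triangle inequality through $I_h^{ND}\boldsymbol v$, the observation that the discrepancy $I_h^{ND}\boldsymbol v-I_h^{gc}\boldsymbol v$ lies elementwise in the bubble space and is controlled by the DoF \eqref{Wkdof3} data with the scaling factor $h_K^{3/2}$, and the commutativity $\curl I_h^{ND}\boldsymbol v=I_h^{BDM}\curl\boldsymbol v$ to convert that data into BDM interpolation errors. The only cosmetic difference is that the paper writes the face data as $\curl(I_h^{ND}\boldsymbol v-I_{h0}^{gc}\boldsymbol v)=I_h^{BDM}(\curl\boldsymbol v)-I_{h0}^{d}(\curl\boldsymbol v)$ and then simply cites the already-proved estimates \eqref{errorestimateIh0d1}--\eqref{errorestimateIh0d2}, whereas you re-derive the corresponding jump/trace bounds inline.
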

\begin{proof}
We only prove \eqref{errorestimateIh0gc1} and \eqref{errorestimateIh0gc2}, since we can apply the similar argument to prove \eqref{errorestimateIhgc1} and \eqref{errorestimateIhgc2}.

For $K\in\mathcal T_h$, it follows from the scaling argument, commutative diagram \eqref{eq:feec}, commutative diagram \eqref{eq:stokescdncfem0}, and the inverse inequality that
\begin{align*}
&\quad\|I_{h}^{ND}\boldsymbol{v}-I_{h0}^{gc}\boldsymbol{v}\|_{0,K} \\
&\lesssim h_K^{3/2}\|\curl(I_{h}^{ND}\boldsymbol{v}-I_{h0}^{gc}\boldsymbol{v})\|_{0,\partial K}= h_K^{3/2}\|I_{h}^{BDM}(\curl\boldsymbol{v})-I_{h0}^{d}(\curl\boldsymbol{v})\|_{0,\partial K} \\
&\lesssim h_K\|I_{h}^{BDM}(\curl\boldsymbol{v})-I_{h0}^{d}(\curl\boldsymbol{v})\|_{0,K}.
\end{align*}
Hence
\[
\|\boldsymbol{v}-I_{h0}^{gc}\boldsymbol{v}\|_{0,K}\lesssim \|\boldsymbol{v}-I_{h}^{ND}\boldsymbol{v}\|_{0,K} + h_K\|I_{h}^{BDM}(\curl\boldsymbol{v})-I_{h0}^{d}(\curl\boldsymbol{v})\|_{0,K}.
\]
Thus estimates \eqref{errorestimateIh0gc1}-\eqref{errorestimateIh0gc2} hold from \eqref{eq:IhNDestimate}-\eqref{eq:IhBDMestimate} and \eqref{errorestimateIh0d1}-\eqref{errorestimateIh0d2}.
\end{proof}


\subsection{Error analysis}

We first present the consistency error of the mixed finite element method \eqref{dismixfor1}-\eqref{dismixfor2}.

\begin{lemma}\label{lem:consistencyerror}
We have for any $\boldsymbol{v}_{h} \in \boldsymbol{W}_{h0}$ that
\begin{equation}
\label{consistencyerror1}
\varepsilon^{2}a_{h}(\boldsymbol{u}, \boldsymbol{v}_{h})+b(\boldsymbol{u}, \boldsymbol{v}_{h})-(\boldsymbol{f}, \boldsymbol{v}_{h}) \lesssim h\varepsilon^{2}|\curl \boldsymbol{u}|_{2}\left|\curl \boldsymbol{v}_{h}\right|_{1, h},
\end{equation}
\begin{equation}
\label{consistencyerror2}
\varepsilon^{2}a_{h}(\boldsymbol{u}, \boldsymbol{v}_{h})+b(\boldsymbol{u}, \boldsymbol{v}_{h})-(\boldsymbol{f}, \boldsymbol{v}_{h}) \lesssim h^{1/2}\varepsilon^{2}|\curl\boldsymbol{u}|_{1}^{1/2}|\curl\boldsymbol{u}|_{2}^{1/2}|\curl\boldsymbol{v}_{h}|_{1, h}.
\end{equation}
\end{lemma}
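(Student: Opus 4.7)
The plan is to derive a consistency identity by elementwise integration by parts and then bound the resulting skeleton integral using the weak continuity furnished by the DoFs of $\boldsymbol{V}_{h0}^{d}$. Starting from the equivalent strong form $-\varepsilon^{2}\curl\Delta(\curl\boldsymbol{u})+\curl^{2}\boldsymbol{u}=\boldsymbol{f}$ (i.e., \eqref{eq:curl2laplaceperturb}), I test with $\boldsymbol{v}_{h}\in\boldsymbol{W}_{h0}\subset\boldsymbol{H}_{0}(\curl,\Omega)$. Global $H(\curl)$-conformity moves the curl onto $\boldsymbol{v}_{h}$ with no boundary contribution, yielding both $(\curl^{2}\boldsymbol{u},\boldsymbol{v}_{h})=b(\boldsymbol{u},\boldsymbol{v}_{h})$ and $-\varepsilon^{2}(\curl\Delta\curl\boldsymbol{u},\boldsymbol{v}_{h})=-\varepsilon^{2}(\Delta\curl\boldsymbol{u},\curl\boldsymbol{v}_{h})$. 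A second, \emph{elementwise} integration by parts gives $-(\Delta\curl\boldsymbol{u},\curl\boldsymbol{v}_{h})_{K}=(\nabla\curl\boldsymbol{u},\nabla\curl\boldsymbol{v}_{h})_{K}-(\partial_{\boldsymbol{n}_{K}}\curl\boldsymbol{u},\curl\boldsymbol{v}_{h})_{\partial K}$, and the consistency error collapses to
\[
\varepsilon^{2}a_{h}(\boldsymbol{u},\boldsymbol{v}_{h})+b(\boldsymbol{u},\boldsymbol{v}_{h})-(\boldsymbol{f},\boldsymbol{v}_{h})=\varepsilon^{2}\sum_{F\in\mathcal{F}_{h}}(\partial_{\boldsymbol{n}_{F}}\curl\boldsymbol{u},J_{F}(\curl\boldsymbol{v}_{h}))_{F},
\]
where $\partial_{\boldsymbol{n}_{F}}\curl\boldsymbol{u}$ is single-valued (as $\curl\boldsymbol{u}\in H^{2}$) and $J_{F}$ denotes the jump on interior faces and the trace on boundary faces.

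The key weak-continuity input comes from the DoFs \eqref{Vskdof1}--\eqref{Vskdof2} defining $\boldsymbol{V}_{h0}^{d}$: since the test spaces $\mathbb{P}_{1}(F)$ in normal and $\boldsymbol{RT}(F)$ in tangential direction both contain constants, $J_{F}(\curl\boldsymbol{v}_{h})$ has vanishing moments against $\mathbb{P}_{0}(F;\mathbb{R}^{3})$. Hence the element-wise $L^{2}$ projection onto constants $Q^{0}_{K}\partial_{\boldsymbol{n}_{F}}\curl\boldsymbol{u}$ may be subtracted from $\partial_{\boldsymbol{n}_{F}}\curl\boldsymbol{u}$ without changing the face integrals. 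On the jump side, the zero face mean combined with the face Poincar\'e inequality and a polynomial inverse trace inequality gives $\sum_{F}\|J_{F}(\curl\boldsymbol{v}_{h})\|_{0,F}^{2}\lesssim h|\curl\boldsymbol{v}_{h}|_{1,h}^{2}$.

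For \eqref{consistencyerror1}, scaling/Bramble-Hilbert yields $\|\partial_{\boldsymbol{n}_{F}}\curl\boldsymbol{u}-Q^{0}_{K}\partial_{\boldsymbol{n}_{F}}\curl\boldsymbol{u}\|_{0,F}^{2}\lesssim h_{K}|\curl\boldsymbol{u}|_{2,K}^{2}$; summing and Cauchy-Schwarz against the $J_{F}$ estimate produces the first bound. The sharper estimate \eqref{consistencyerror2} is the delicate step and the main obstacle: the naive skeleton trace $\sum_{F}\|\partial_{\boldsymbol{n}_{F}}\curl\boldsymbol{u}\|_{0,F}^{2}$ incurs an unavoidable $h^{-1}\|\nabla\curl\boldsymbol{u}\|_{0}^{2}$ loss from the element-wise multiplicative trace inequality. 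The resolution is to apply multiplicative trace directly to $\boldsymbol{w}:=\partial_{\boldsymbol{n}_{F}}\curl\boldsymbol{u}-Q^{0}_{K}\partial_{\boldsymbol{n}_{F}}\curl\boldsymbol{u}$,
\[
\|\boldsymbol{w}\|_{0,\partial K}^{2}\lesssim\|\boldsymbol{w}\|_{0,K}|\boldsymbol{w}|_{1,K}+h_{K}^{-1}\|\boldsymbol{w}\|_{0,K}^{2},
\]
and to combine the trivial bound $\|\boldsymbol{w}\|_{0,K}\leq\|\partial_{\boldsymbol{n}_{F}}\curl\boldsymbol{u}\|_{0,K}$ with the Poincar\'e bound $\|\boldsymbol{w}\|_{0,K}\lesssim h_{K}|\partial_{\boldsymbol{n}_{F}}\curl\boldsymbol{u}|_{1,K}$ via the geometric mean $\|\boldsymbol{w}\|_{0,K}^{2}\lesssim h_{K}\|\partial_{\boldsymbol{n}_{F}}\curl\boldsymbol{u}\|_{0,K}|\partial_{\boldsymbol{n}_{F}}\curl\boldsymbol{u}|_{1,K}$. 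This exactly cancels the $h_{K}^{-1}$ factor and delivers $\|\boldsymbol{w}\|_{0,\partial K}^{2}\lesssim|\curl\boldsymbol{u}|_{1,K}|\curl\boldsymbol{u}|_{2,K}$. Summing over elements with Cauchy-Schwarz gives $\bigl(\sum_{F}\|\boldsymbol{w}\|_{0,F}^{2}\bigr)^{1/2}\lesssim|\curl\boldsymbol{u}|_{1}^{1/2}|\curl\boldsymbol{u}|_{2}^{1/2}$, and pairing with the $O(h^{1/2})$ bound on the jump side yields \eqref{consistencyerror2}.
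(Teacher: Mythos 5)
Your proof is correct and follows essentially the same route as the paper: the identical consistency identity obtained from $H(\curl)$-conformity plus elementwise integration by parts, the insertion of the piecewise-constant projection justified by the weak continuity of $\curl\boldsymbol{v}_{h}\in\boldsymbol{V}_{h0}^{d}$, a Bramble--Hilbert/trace bound for \eqref{consistencyerror1}, and the multiplicative trace inequality on the oscillation of $\partial_{n}(\curl\boldsymbol{u})$ for \eqref{consistencyerror2}. You in fact spell out the $|\curl\boldsymbol{u}|_{1}^{1/2}|\curl\boldsymbol{u}|_{2}^{1/2}$ step in more detail than the paper does; the only cosmetic difference is that you work with face jumps $J_{F}$ directly rather than subtracting $Q_{F}^{0}\curl\boldsymbol{v}_{h}$, which is an equivalent bookkeeping choice.
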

\begin{proof}
Since $\boldsymbol{v}_{h}\in\boldsymbol{H}_{0}(\curl, \Omega)$,
we derive from \eqref{eq:curl2laplaceperturb} and the integration by parts that
\[
-\varepsilon^{2}(\Delta\curl\boldsymbol{u}, \curl\boldsymbol{v}_h)+ b(\boldsymbol{u}, \boldsymbol{v}_h)=(\boldsymbol{f}, \boldsymbol{v}_h).
\]
Then
\begin{align}
\varepsilon^{2}a_{h}\left(\boldsymbol{u}, \boldsymbol{v}_{h}\right)+b\left(\boldsymbol{u}, \boldsymbol{v}_{h}\right)-\left(\boldsymbol{f}, \boldsymbol{v}_{h}\right) &=\varepsilon^{2}a_{h}\left(\boldsymbol{u}, \boldsymbol{v}_{h}\right) + \varepsilon^{2}(\Delta\curl\boldsymbol{u}, \curl\boldsymbol{v}_h) \notag\\
&=\sum_{K \in \mathcal{T}_{h}}\varepsilon^{2}\left(\partial_{n}(\curl \boldsymbol{u}), \curl \boldsymbol{v}_{h}\right)_{\partial K}. \label{eq:202112124}
\end{align}
On the other hand, due to the weak continuity of $\curl \boldsymbol{v}_{h}\in\boldsymbol{V}_{h0}^d$, we get from the trace inequality and the error estimate of $Q_{F}^{0}$ that
\begin{align*}
&\quad \sum_{K \in \mathcal{T}_{h}}\varepsilon^{2}\left(\partial_{n}(\curl \boldsymbol{u}), \curl \boldsymbol{v}_{h}\right)_{\partial K} \\
&= \sum_{K \in \mathcal{T}_{h}} \sum_{F \in \mathcal{F}(K)}\varepsilon^{2}\left(\partial_{n}(\curl \boldsymbol{u})-Q_{F}^{0} \partial_{n}(\curl \boldsymbol{u}), \curl \boldsymbol{v}_{h}\right)_{F} \\
&= \sum_{K \in \mathcal{T}_{h}} \sum_{F \in \mathcal{F}(K)}\varepsilon^{2}\left(\partial_{n}(\curl \boldsymbol{u})-Q_{F}^{0} \partial_{n}(\curl \boldsymbol{u}), \curl \boldsymbol{v}_{h}-Q_{F}^{0} \curl \boldsymbol{v}_{h}\right)_{F} \\
&\lesssim \min\big\{h\varepsilon^{2}|\curl \boldsymbol{u}|_{2}|\curl \boldsymbol{v}_{h}|_{1, h}, h^{1/2}\varepsilon^{2}|\curl\boldsymbol{u}|_{1}^{1/2}|\curl\boldsymbol{u}|_{2}^{1/2}|\curl\boldsymbol{v}_{h}|_{1, h}\big\}.
\end{align*}
Thus \eqref{consistencyerror1} and \eqref{consistencyerror2} follow from \eqref{eq:202112124}.
\end{proof}

Now we can show the a priori error estimate.

\begin{theorem}\label{thm:priorierror}
Let $\boldsymbol{u} \in \boldsymbol{H}_{0}(\grad \curl, \Omega)$ be the solution of the problem \eqref{quadcurl}, and $\boldsymbol{u}_{h0} \in \boldsymbol{W}_{h0}$ be the solution of the mixed finite element method \eqref{dismixfor1}-\eqref{dismixfor2}. Then
\begin{align}
\label{priorierror}
\|\boldsymbol{u}-\boldsymbol{u}_{h0}\|_{\varepsilon,h} &\lesssim \inf\limits_{\boldsymbol{w}_{h} \in \boldsymbol{W}_{h0}}\|\boldsymbol{u}-\boldsymbol{w}_h\|_{\varepsilon, h} \\
&\quad\; +\sup\limits_{\boldsymbol{v}_{h} \in \boldsymbol{W}_{h0}} \frac{\varepsilon^{2}a_{h}(\boldsymbol{u}, \boldsymbol{v}_{h})+b(\boldsymbol{u}, \boldsymbol{v}_{h})-(\boldsymbol{f}, \boldsymbol{v}_{h})}{\|\boldsymbol{v}_{h}\|_{\varepsilon, h}}. \notag
\end{align}
\end{theorem}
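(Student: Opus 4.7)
The plan is to run a standard Strang/Berger--Scott--Strang type argument, using the discrete inf-sup stability \eqref{disstability} as the engine and the discrete equations \eqref{dismixfor1}--\eqref{dismixfor2} together with the fact $\lambda_h=0$ to generate a consistency error. Since $\div\boldsymbol{f}=0$ forces $\lambda_h=0$ in the discrete problem (by the argument already made to reduce \eqref{dismixfor1} to \eqref{nolambda}) and $\lambda=0$ in the continuous problem likewise, the Lagrange multipliers drop out and only the velocity error needs to be controlled.

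The first step is to pick an arbitrary $\boldsymbol{w}_h\in\boldsymbol{W}_{h0}$ and apply the discrete stability estimate \eqref{disstability} with $\boldsymbol{\psi}_h=\boldsymbol{u}_{h0}-\boldsymbol{w}_h$ and $\gamma_h=0$, which yields
\[
\|\boldsymbol{u}_{h0}-\boldsymbol{w}_h\|_{\varepsilon,h}
\lesssim \sup_{(\boldsymbol{v}_h,\mu_h)}\frac{\varepsilon^{2}a_h(\boldsymbol{u}_{h0}-\boldsymbol{w}_h,\boldsymbol{v}_h)+b(\boldsymbol{u}_{h0}-\boldsymbol{w}_h,\boldsymbol{v}_h)+(\boldsymbol{u}_{h0}-\boldsymbol{w}_h,\nabla\mu_h)}{\|\boldsymbol{v}_h\|_{\varepsilon,h}+|\mu_h|_1}.
\]
Next I would substitute the discrete equations \eqref{nolambda} and \eqref{dismixfor2} to replace every occurrence of $\boldsymbol{u}_{h0}$ by data: the $a_h$ and $b$ terms give $(\boldsymbol{f},\boldsymbol{v}_h)$, and the $(\boldsymbol{u}_{h0},\nabla\mu_h)$ term vanishes. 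Then I would add and subtract $\boldsymbol{u}$, using $(\boldsymbol{u},\nabla\mu_h)=-(\div\boldsymbol{u},\mu_h)=0$ (valid because $\mu_h\in V_h^g\subset H_0^1(\Omega)$ and $\div\boldsymbol{u}=0$), to rewrite the numerator as
\[
\bigl[(\boldsymbol{f},\boldsymbol{v}_h)-\varepsilon^{2}a_h(\boldsymbol{u},\boldsymbol{v}_h)-b(\boldsymbol{u},\boldsymbol{v}_h)\bigr]
+\varepsilon^{2}a_h(\boldsymbol{u}-\boldsymbol{w}_h,\boldsymbol{v}_h)+b(\boldsymbol{u}-\boldsymbol{w}_h,\boldsymbol{v}_h)+(\boldsymbol{u}-\boldsymbol{w}_h,\nabla\mu_h).
\]

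The last three terms are controlled by the continuity bounds for $a_h$, $b$ and $c$ already listed after the definition of $\|\cdot\|_{\varepsilon,h}$, giving $\|\boldsymbol{u}-\boldsymbol{w}_h\|_{\varepsilon,h}(\|\boldsymbol{v}_h\|_{\varepsilon,h}+|\mu_h|_1)$, which after division is exactly the approximation part of \eqref{priorierror}. The bracketed quantity is independent of $\mu_h$, so the supremum over $(\boldsymbol{v}_h,\mu_h)$ reduces to a supremum over $\boldsymbol{v}_h\in\boldsymbol{W}_{h0}$, producing the consistency term in \eqref{priorierror}. Finally I would invoke the triangle inequality $\|\boldsymbol{u}-\boldsymbol{u}_{h0}\|_{\varepsilon,h}\leq \|\boldsymbol{u}-\boldsymbol{w}_h\|_{\varepsilon,h}+\|\boldsymbol{w}_h-\boldsymbol{u}_{h0}\|_{\varepsilon,h}$ and take the infimum over $\boldsymbol{w}_h\in\boldsymbol{W}_{h0}$.

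There is no serious obstacle; the argument is purely mechanical once one has the discrete inf-sup \eqref{disstability} in hand. The only subtle point is the algebraic reshuffle that converts the $(\boldsymbol{u}_{h0}-\boldsymbol{w}_h,\nabla\mu_h)$ term into an approximation error for $\boldsymbol{u}-\boldsymbol{w}_h$, which requires the orthogonality $(\boldsymbol{u},\nabla\mu_h)=0$; this depends on $\boldsymbol{u}\in\boldsymbol{H}_0(\curl,\Omega)$ having zero divergence and on $\mu_h$ vanishing on $\partial\Omega$, both of which hold in the present setting.
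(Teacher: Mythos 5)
Your proposal is correct and follows essentially the same route as the paper: apply the discrete stability \eqref{disstability} to $\boldsymbol{u}_{h0}-\boldsymbol{w}_h$ with $\gamma_h=0$, eliminate $\boldsymbol{u}_{h0}$ via \eqref{nolambda} and \eqref{dismixfor2}, insert $\boldsymbol{u}$ using $c(\boldsymbol{u},\mu_h)=0$, bound the approximation part by continuity, and finish with the triangle inequality and an infimum over $\boldsymbol{w}_h$. The only difference from the paper's argument is the immaterial sign convention in the choice of $\boldsymbol{\psi}_h$.
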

\begin{proof}
For $\boldsymbol{w}_{h}, \boldsymbol{v}_{h} \in \boldsymbol{W}_{h0}$,
we get from \eqref{nolambda} and the fact $\boldsymbol{u}_{h0} \in \mathcal{K}_{h0}^{d}$ that
\begin{align*}
&\quad \varepsilon^{2}a_{h}(\boldsymbol{w}_{h}-\boldsymbol{u}_{h0},  \boldsymbol{v}_{h})+b(\boldsymbol{w}_{h}-\boldsymbol{u}_{h0},  \boldsymbol{v}_{h}) + c(\boldsymbol{w}_{h}-\boldsymbol{u}_{h0}, \mu_{h}) \\
&=\varepsilon^{2}a_{h}(\boldsymbol{w}_{h},  \boldsymbol{v}_{h})+b(\boldsymbol{w}_{h},  \boldsymbol{v}_{h}) + c(\boldsymbol{w}_{h}, \mu_{h}) -(\boldsymbol{f}, \boldsymbol{v}_{h}) \\
&=\varepsilon^{2}a_{h}(\boldsymbol{w}_{h}-\boldsymbol{u},  \boldsymbol{v}_{h})+b(\boldsymbol{w}_{h}-\boldsymbol{u},  \boldsymbol{v}_{h}) + c(\boldsymbol{w}_{h}-\boldsymbol{u}, \mu_{h}) \\
&\quad + \varepsilon^{2}a_{h}(\boldsymbol{u}, \boldsymbol{v}_{h})+b(\boldsymbol{u}, \boldsymbol{v}_{h})-(\boldsymbol{f}, \boldsymbol{v}_{h}) \\
&\lesssim \|\boldsymbol{u}-\boldsymbol{w}_{h}\|_{\varepsilon, h}(\|\boldsymbol{v}_h\|_{\varepsilon, h}+|\mu_h|_1) + \varepsilon^{2}a_{h}(\boldsymbol{u}, \boldsymbol{v}_{h})+b(\boldsymbol{u}, \boldsymbol{v}_{h})-(\boldsymbol{f}, \boldsymbol{v}_{h}).
\end{align*}
By the discrete stability \eqref{disstability} with $\boldsymbol{\psi}_{h} = \boldsymbol{w}_{h}-\boldsymbol{u}_{h0}$ and
$\gamma_{h} = 0$, we get
\begin{align*}
&\|\boldsymbol{w}_{h}-\boldsymbol{u}_{h0}\|_{\varepsilon,h}\\
\lesssim & \sup _{\left(\boldsymbol{v}_{h}, \mu_{h}\right) \in \boldsymbol{W}_{h0} \times V_{h}^{g}} \frac{\varepsilon^{2}a_{h}(\boldsymbol{w}_{h}-\boldsymbol{u}_{h0},  \boldsymbol{v}_{h})+b(\boldsymbol{w}_{h}-\boldsymbol{u}_{h0},  \boldsymbol{v}_{h})+(\boldsymbol{w}_{h}-\boldsymbol{u}_{h0}, \nabla \mu_{h})}{\|\boldsymbol{v}_{h}\|_{\varepsilon, h}+|\mu_{h}|_{1}} \\
\lesssim &\|\boldsymbol{u}-\boldsymbol{w}_{h}\|_{\varepsilon, h}+\sup _{\boldsymbol{v}_{h} \in \boldsymbol{W}_{h0}} \frac{\varepsilon^{2}a_{h}(\boldsymbol{u}, \boldsymbol{v}_{h})+b(\boldsymbol{u}, \boldsymbol{v}_{h})-(\boldsymbol{f}, \boldsymbol{v}_{h})}{\|\boldsymbol{v}_{h}\|_{\varepsilon, h}},
\end{align*}
which together with the triangle inequality implies \eqref{priorierror}.
\end{proof}

\begin{corollary}\label{cor:priorierrornotrobust}
Let $\boldsymbol{u} \in \boldsymbol{H}_{0}(\grad \curl, \Omega)$ be the solution of the problem \eqref{quadcurl}, and $\boldsymbol{u}_{h0} \in \boldsymbol{W}_{h0}$ be the solution of the mixed finite element method \eqref{dismixfor1}-\eqref{dismixfor2}. Assume $\boldsymbol{u} \in \boldsymbol{H}^{k+1}(\Omega; \mathbb{R}^{3})$ and $\curl\boldsymbol{u} \in \boldsymbol{H}^{2}(\Omega; \mathbb{R}^{3})$. Then we have
\begin{equation}
\label{priorierrornotrobust}
\|\boldsymbol{u}-\boldsymbol{u}_{h0}\|_{\varepsilon,h} \lesssim h^{k+1}|\boldsymbol{u}|_{k+1}+ (h^2+h\varepsilon)|\curl\boldsymbol{u}|_{2}.
\end{equation}
\end{corollary}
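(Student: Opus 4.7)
The plan is to apply the abstract a priori error estimate \eqref{priorierror} from Theorem~\ref{thm:priorierror} and to estimate the two terms on the right-hand side separately: an approximation term and a consistency term.

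For the approximation term, I would choose $\boldsymbol{w}_h = I_{h0}^{gc}\boldsymbol{u}$ and estimate $\|\boldsymbol{u} - I_{h0}^{gc}\boldsymbol{u}\|_{\varepsilon,h}^2$ piece by piece using the definition
\[
\|\boldsymbol{v}\|_{\varepsilon,h}^2 = \|\boldsymbol{v}\|_0^2 + \|\curl\boldsymbol{v}\|_0^2 + \varepsilon^2|\curl\boldsymbol{v}|_{1,h}^2.
\]
The $L^2$-part of $\boldsymbol{u} - I_{h0}^{gc}\boldsymbol{u}$ is handled directly by \eqref{errorestimateIh0gc1} with $j=k+1$, giving $h^{k+1}|\boldsymbol{u}|_{k+1}$. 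The key observation for the remaining two pieces is the commutative diagram \eqref{eq:stokescdncfem0}, which gives $\curl(I_{h0}^{gc}\boldsymbol{u}) = I_{h0}^d(\curl\boldsymbol{u})$. Then $\|\curl\boldsymbol{u} - \curl(I_{h0}^{gc}\boldsymbol{u})\|_0$ and $|\curl\boldsymbol{u} - \curl(I_{h0}^{gc}\boldsymbol{u})|_{1,h}$ reduce to $\|\curl\boldsymbol{u} - I_{h0}^d(\curl\boldsymbol{u})\|_0$ and $|\curl\boldsymbol{u} - I_{h0}^d(\curl\boldsymbol{u})|_{1,h}$, which by \eqref{errorestimateIh0d1} (with $j=2$, since $\curl\boldsymbol{u}\in\boldsymbol{H}_0^1(\Omega;\mathbb{R}^3)\cap\boldsymbol{H}^2(\Omega;\mathbb{R}^3)$) are bounded by $h^2|\curl\boldsymbol{u}|_2$ and $h|\curl\boldsymbol{u}|_2$, respectively. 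Combining these three estimates produces
\[
\|\boldsymbol{u} - I_{h0}^{gc}\boldsymbol{u}\|_{\varepsilon,h} \lesssim h^{k+1}|\boldsymbol{u}|_{k+1} + h^2|\curl\boldsymbol{u}|_2 + \varepsilon h|\curl\boldsymbol{u}|_2.
\]

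For the consistency term, I would apply the first consistency estimate \eqref{consistencyerror1} from Lemma~\ref{lem:consistencyerror}:
\[
\varepsilon^2 a_h(\boldsymbol{u},\boldsymbol{v}_h) + b(\boldsymbol{u},\boldsymbol{v}_h) - (\boldsymbol{f},\boldsymbol{v}_h) \lesssim h\varepsilon^2 |\curl\boldsymbol{u}|_2 |\curl\boldsymbol{v}_h|_{1,h}.
\]
Since $\varepsilon|\curl\boldsymbol{v}_h|_{1,h} \leq \|\boldsymbol{v}_h\|_{\varepsilon,h}$ by the definition of the discrete norm, dividing by $\|\boldsymbol{v}_h\|_{\varepsilon,h}$ and taking the supremum yields a bound $h\varepsilon|\curl\boldsymbol{u}|_2$ for the second term in \eqref{priorierror}.

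Assembling the two contributions through \eqref{priorierror} directly yields \eqref{priorierrornotrobust}. There is no genuine obstacle; the only subtle point to flag is that we must route the $\curl$-error through the commutative diagram so as to invoke the Stokes-type interpolation estimate \eqref{errorestimateIh0d1} rather than trying to bound $\curl(\boldsymbol{u} - I_{h0}^{gc}\boldsymbol{u})$ directly. The bound is manifestly \emph{not} robust in $\varepsilon$ because using \eqref{regularity} to control $|\curl\boldsymbol{u}|_2$ introduces a factor $\varepsilon^{-2}$, which motivates the modified method with Nitsche's technique in the next section.
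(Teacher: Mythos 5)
Your proposal is correct and follows essentially the same route as the paper: it takes $\boldsymbol{w}_h = I_{h0}^{gc}\boldsymbol{u}$ in \eqref{priorierror}, bounds the $L^2$-part via \eqref{errorestimateIh0gc1}, routes the $\curl$-parts through the commutative diagram \eqref{eq:stokescdncfem0} to invoke \eqref{errorestimateIh0d1}, and controls the consistency term with \eqref{consistencyerror1}. The only difference is expository detail; the paper's proof is a condensed version of exactly these steps.
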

\begin{proof}
It follows from \eqref{errorestimateIh0gc1} that
\[
\|\boldsymbol{u}-I_{h0}^{gc}\boldsymbol{u}\|_{0}\lesssim h^{k+1}|\boldsymbol{u}|_{k+1}.
\]
By commutative diagram \eqref{eq:stokescdncfem0} and \eqref{errorestimateIh0d1},
\[
h^j|\curl(\boldsymbol{u}-I_{h0}^{gc}\boldsymbol{u})|_{j,h}=h^j|\curl\boldsymbol{u}-I_{h0}^{d}(\curl\boldsymbol{u})|_{j,h}\lesssim h^{2}|\curl\boldsymbol{u}|_{2},\; j=0,1.
\]
Therefore \eqref{priorierrornotrobust} holds from \eqref{consistencyerror1} and \eqref{priorierror} with $\boldsymbol{w}_h=I_{h0}^{gc}\boldsymbol{u}$.
\end{proof}

Corollary~\ref{cor:priorierrornotrobust} ensures the linear convergence $O(h)$ of $\|\boldsymbol{u}-\boldsymbol{u}_{h0}\|_{\varepsilon,h} $ for any fixed $\varepsilon>0$. However, by the regularity \eqref{regularity}, $|\curl\boldsymbol{u}|_2=O(\varepsilon^{-3/2})$ depends on parameter $\varepsilon$, therefore the upper bound $h^{k+1}|\boldsymbol{u}|_{k+1}+ (h^2+h\varepsilon)|\curl\boldsymbol{u}|_{2}$ in \eqref{priorierrornotrobust} will deteriorate when $\varepsilon$ approaches zero. In the following we show a uniform estimate of $\|\boldsymbol{u}-\boldsymbol{u}_{h0}\|_{\varepsilon,h}$ with respect to $\varepsilon$.


\begin{theorem}
Let $\boldsymbol{u} \in \boldsymbol{H}_{0}(\grad \curl, \Omega)$ be the solution of the problem \eqref{quadcurl}, $\boldsymbol{u}_{0} \in \boldsymbol{H}_{0}(\curl, \Omega)$ be the solution of double curl equation \eqref{reduced}, and $\boldsymbol{u}_{h0} \in \boldsymbol{W}_{h0}$ be the solution of the mixed finite element methods \eqref{dismixfor1}-\eqref{dismixfor2}. Then we have the robust error estimates
\begin{equation}
\label{priorierrorrobust}
\|\boldsymbol{u}-\boldsymbol{u}_{h0}\|_{\varepsilon,h} \lesssim h^{1/2}\|\boldsymbol{f}\|_{0},
\end{equation}
\begin{equation}
\label{priorierrorrobustu0}
\|\boldsymbol{u}_0-\boldsymbol{u}_{h0}\|_{\varepsilon,h} \lesssim (\varepsilon^{1/2}+h^{1/2})\|\boldsymbol{f}\|_{0}.
\end{equation}
\end{theorem}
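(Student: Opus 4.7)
The plan is to apply the quasi-optimal estimate in Theorem~\ref{thm:priorierror} with the specific choice $\boldsymbol{w}_h = I_{h0}^{gc}\boldsymbol{u}$ and then balance the two terms in the bound against the $\varepsilon$-dependent regularity in \eqref{regularity}. The central technical issue is that the higher seminorms $|\curl\boldsymbol{u}|_1$ and $|\curl\boldsymbol{u}|_2$ blow up like $\varepsilon^{-1/2}$ and $\varepsilon^{-3/2}$ respectively, so one must avoid any interpolation or consistency bound that requires the full second derivatives of $\curl\boldsymbol{u}$ with weight $h$ rather than $h^{1/2}$.

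For the approximation part $\inf_{\boldsymbol{w}_h}\|\boldsymbol{u}-\boldsymbol{w}_h\|_{\varepsilon,h}$, I would estimate the three pieces of $\|\boldsymbol{u}-I_{h0}^{gc}\boldsymbol{u}\|_{\varepsilon,h}$ separately. For the $L^2$ piece, split $\boldsymbol{u} = \boldsymbol{u}_0 + (\boldsymbol{u}-\boldsymbol{u}_0)$, apply the low-smoothness bound \eqref{errorestimateIh0gc2} to each summand, and use \eqref{eq:regularityu0} together with the last term of \eqref{regularity} to obtain $\|\boldsymbol{u}-I_{h0}^{gc}\boldsymbol{u}\|_0 \lesssim h(1+\varepsilon^{1/2})\|\boldsymbol{f}\|_0 \lesssim h^{1/2}\|\boldsymbol{f}\|_0$. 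For the curl and gradient-of-curl pieces, invoke the commutative diagram \eqref{eq:stokescdncfem0} to rewrite
\[
\|\curl(\boldsymbol{u}-I_{h0}^{gc}\boldsymbol{u})\|_0 + \varepsilon|\curl(\boldsymbol{u}-I_{h0}^{gc}\boldsymbol{u})|_{1,h} = \|\curl\boldsymbol{u}-I_{h0}^{d}(\curl\boldsymbol{u})\|_0 + \varepsilon|\curl\boldsymbol{u}-I_{h0}^{d}(\curl\boldsymbol{u})|_{1,h},
\]
which is precisely bounded by $h^{1/2}\|\boldsymbol{f}\|_0$ via \eqref{errorestimateIh0d3}. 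This is the step where the boundary-layer-aware interpolation error estimate \eqref{errorestimateIh0d3}, proved earlier by interpolating between $H^1$ and $H^2$ norms of $\curl\boldsymbol{u}$, really earns its keep.

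For the consistency term I would use the second form of Lemma~\ref{lem:consistencyerror}, namely \eqref{consistencyerror2}. Substituting the bounds $|\curl\boldsymbol{u}|_1 \lesssim \varepsilon^{-1/2}\|\boldsymbol{f}\|_0$ and $|\curl\boldsymbol{u}|_2 \lesssim \varepsilon^{-3/2}\|\boldsymbol{f}\|_0$ from \eqref{regularity} gives a factor $\varepsilon^{2}(\varepsilon^{-1/2})^{1/2}(\varepsilon^{-3/2})^{1/2} = \varepsilon$, so the numerator is bounded by $h^{1/2}\varepsilon\|\boldsymbol{f}\|_0|\curl\boldsymbol{v}_h|_{1,h} \leq h^{1/2}\|\boldsymbol{f}\|_0\|\boldsymbol{v}_h\|_{\varepsilon,h}$ since $\varepsilon|\curl\boldsymbol{v}_h|_{1,h}\leq \|\boldsymbol{v}_h\|_{\varepsilon,h}$ by the definition of the discrete norm. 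The main obstacle is precisely this balancing: the naive bound \eqref{consistencyerror1} would produce $h\varepsilon^{1/2}\|\boldsymbol{f}\|_0$ after weighting by $\varepsilon$, which is acceptable but only by virtue of the same interpolation-style trick, while \eqref{consistencyerror2} allows a clean uniform $h^{1/2}\|\boldsymbol{f}\|_0$ estimate that matches the approximation error. Combining the approximation and consistency bounds with Theorem~\ref{thm:priorierror} yields \eqref{priorierrorrobust}.

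For \eqref{priorierrorrobustu0}, the plan is simply to apply the triangle inequality $\|\boldsymbol{u}_0-\boldsymbol{u}_{h0}\|_{\varepsilon,h} \leq \|\boldsymbol{u}_0-\boldsymbol{u}\|_{\varepsilon,h} + \|\boldsymbol{u}-\boldsymbol{u}_{h0}\|_{\varepsilon,h}$ and bound the first term using \eqref{regularity} and \eqref{eq:regularityu0}: since $\|\boldsymbol{u}-\boldsymbol{u}_0\|_1 \lesssim \varepsilon^{1/2}\|\boldsymbol{f}\|_0$ and $\varepsilon|\curl(\boldsymbol{u}-\boldsymbol{u}_0)|_1 \leq \varepsilon|\curl\boldsymbol{u}|_1 + \varepsilon|\curl\boldsymbol{u}_0|_1 \lesssim \varepsilon^{1/2}\|\boldsymbol{f}\|_0$ (using the bounded-$\varepsilon$ hypothesis), we obtain $\|\boldsymbol{u}-\boldsymbol{u}_0\|_{\varepsilon,h} \lesssim \varepsilon^{1/2}\|\boldsymbol{f}\|_0$, and \eqref{priorierrorrobustu0} follows from \eqref{priorierrorrobust}.
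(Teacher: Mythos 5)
Your proposal is correct and follows essentially the same route as the paper: choose $\boldsymbol{w}_h=I_{h0}^{gc}\boldsymbol{u}$ in Theorem~\ref{thm:priorierror}, bound the $L^2$ part via \eqref{errorestimateIh0gc2} and the splitting $\boldsymbol{u}=\boldsymbol{u}_0+(\boldsymbol{u}-\boldsymbol{u}_0)$, handle the curl parts through the commutative diagram and \eqref{errorestimateIh0d3}, control the consistency term with \eqref{consistencyerror2} and the regularity \eqref{regularity}, and obtain \eqref{priorierrorrobustu0} by the triangle inequality. The $\varepsilon$-balancing in the consistency term and the final estimates match the paper's argument exactly.
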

\begin{proof}
It follows from \eqref{errorestimateIh0gc2} and \eqref{eq:regularityu0}-\eqref{regularity} that
\[
\|\boldsymbol{u}-I_{h0}^{gc}\boldsymbol{u}\|_{0}\lesssim h|\boldsymbol{u}|_1\leq h|\boldsymbol{u}-\boldsymbol{u}_0|_1+h|\boldsymbol{u}_0|_1\lesssim h\|\boldsymbol{f}\|_{0}.
\]
Then we get from \eqref{errorestimateIh0d3} and $\curl(I_{h0}^{gc}\boldsymbol{u})=I_{h0}^{d}(\curl\boldsymbol{u})$ that
\[
\|\boldsymbol{u}-I_{h0}^{gc}\boldsymbol{u}\|_{\varepsilon,h} \lesssim h^{1/2}\|\boldsymbol{f}\|_{0}.
\]
On the other side, it holds from \eqref{consistencyerror2} and \eqref{regularity} that
\[
\sup\limits_{\boldsymbol{v}_{h} \in \boldsymbol{W}_{h0}} \frac{\varepsilon^{2}a_{h}(\boldsymbol{u}, \boldsymbol{v}_{h})+b(\boldsymbol{u}, \boldsymbol{v}_{h})-(\boldsymbol{f}, \boldsymbol{v}_{h})}{\|\boldsymbol{v}_{h}\|_{\varepsilon, h}}\lesssim h^{1/2}\varepsilon|\curl\boldsymbol{u}|_{1}^{1/2}|\curl\boldsymbol{u}|_{2}^{1/2}\lesssim h^{1/2}\|\boldsymbol{f}\|_{0}.
\]
Hence we conclude \eqref{priorierrorrobust} from \eqref{priorierror} and the last two inequalities.
Finally combining \eqref{priorierrorrobust} and regularities \eqref{eq:regularityu0}-\eqref{regularity} produces \eqref{priorierrorrobustu0}.
\end{proof}

\section{Modified Mixed Finite Element Method Based on Nitsche's Technique}\label{sec:modifiedmfem}

The half-order convergence rate of $\|\boldsymbol{u}-\boldsymbol{u}_{h}\|_{\varepsilon,h}$ in \eqref{priorierrorrobust} is sharp, but not optimal. To this end, we will develop a modified mixed finite element method based on the Nitsche's technique following the ideas in \cite{Nitsche1971,Schieweck2008,GuzmanLeykekhmanNeilan2012} in this section, which possesses the optimal convergence rate for small $\varepsilon$. 

\subsection{Modified mixed finite element method}
By applying the Nitsche's technique to the linear form $a_{h}(\boldsymbol{u}_{h}, \boldsymbol{v}_{h})$ in the mixed finite element method \eqref{dismixfor1}-\eqref{dismixfor2}, the modified mixed finite element method is to find $\boldsymbol{u}_{h} \in \boldsymbol{W}_{h}$ and $\lambda_{h} \in V_{h}^{g}$ such that
\begin{align}
\label{dismixforNitsche1}
\varepsilon^{2}\tilde a_{h}(\boldsymbol{u}_{h}, \boldsymbol{v}_{h})+b(\boldsymbol{u}_{h}, \boldsymbol{v}_{h})+(\boldsymbol{v}_{h}, \nabla \lambda_{h}) &=(\boldsymbol{f}, \boldsymbol{v}_{h}) \quad \forall~\boldsymbol{v}_{h} \in \boldsymbol{W}_{h}, \\
\label{dismixforNitsche2}
(\tilde{\boldsymbol{u}}_{h}, \nabla \mu_{h}) &=0 \quad\quad\quad\;\;  \forall~\mu_h \in V_{h}^{g},
\end{align}
where
\begin{align*}
\tilde a_{h}(\boldsymbol{u}_{h}, \boldsymbol{v}_{h})&=(\nabla_h\curl\boldsymbol{u}_h, \nabla_h\curl\boldsymbol{v}_h) - \sum_{F\in\mathcal F_h^{\partial}}\big(\partial_n(\curl\boldsymbol{u}_{h}), \curl\boldsymbol{v}_{h}\big)_F \\
&\quad - \sum_{F\in\mathcal F_h^{\partial}}\big(\curl\boldsymbol{u}_{h}, \partial_n(\curl\boldsymbol{v}_{h})\big)_F+ \sum_{F\in\mathcal F_h^{\partial}}\frac{\sigma}{h_F}\big(\curl\boldsymbol{u}_{h}, \curl\boldsymbol{v}_{h}\big)_F
\end{align*}
with constant $\sigma>0$.

Equip $\boldsymbol{W}_{h}$ with the discrete squared norm
\begin{align*}
\interleave\boldsymbol{v}_{h}\interleave_{\varepsilon,h}^{2}&:=\|\boldsymbol{v}_{h}\|_{\varepsilon,h}^{2}+\varepsilon^{2}\sum_{F\in\mathcal F_h^{\partial}}\frac{1}{h_F}\|\curl\boldsymbol{v}_{h}\|_F^2\\
&\;=\|\boldsymbol{v}_{h}\|_{0}^{2}+\|\curl \boldsymbol{v}_{h}\|_{0}^{2}+\varepsilon^{2}|\curl \boldsymbol{v}_{h}|_{1, h}^{2}+\varepsilon^{2}\sum_{F\in\mathcal F_h^{\partial}}\frac{1}{h_F}\|\curl\boldsymbol{v}_{h}\|_F^2.
\end{align*}
There exists a constant $\sigma_0>0$ depending only on the shape regularity of $\mathcal T_h$ such that for any fixed number $\sigma\geq\sigma_0$, it holds \cite[Lemma 2]{Wheeler1978}
\begin{equation*}
|\curl \boldsymbol{v}_{h}|_{1, h}^{2}+\sum_{F\in\mathcal F_h^{\partial}}\frac{1}{h_F}\|\curl\boldsymbol{v}_{h}\|_F^2\lesssim \tilde a_{h}(\boldsymbol{v}_{h}, \boldsymbol{v}_{h}) \quad\forall~\boldsymbol{v}_h\in \boldsymbol{W}_{h}.
\end{equation*}

Adopting the similar argument as in Section~\ref{sec:mfem},
we have the discrete stability
\begin{align*}
&\|\boldsymbol{\psi}_{h}\|_{\varepsilon,h}+|\gamma_{h}|_{1} \\
\lesssim & \sup _{(\boldsymbol{v}_{h}, \mu_{h}) \in \boldsymbol{W}_{h} \times V_{h}^{g}}\frac{\varepsilon^{2}\tilde a_{h}(\boldsymbol{\psi}_{h}, \boldsymbol{v}_{h})+b(\boldsymbol{\psi}_{h}, \boldsymbol{v}_{h})+(\boldsymbol{v}_{h}, \nabla\gamma_{h})+(\boldsymbol{\psi}_{h}, \nabla \mu_{h})}{\interleave\boldsymbol{v}_{h}\interleave_{\varepsilon,h}+|\mu_{h}|_{1}} 
\end{align*}  
for any $\boldsymbol{\psi}_{h} \in \boldsymbol{W}_{h}$ and $\gamma_{h} \in V_{h}^{g}$.
Therefore the modified mixed finite element method \eqref{dismixforNitsche1}-\eqref{dismixforNitsche2} is well-posed.

\subsection{Error analysis}
First apply the same argument as the proof of Theorem~\ref{thm:priorierror} and Corollary~\ref{cor:priorierrornotrobust} to derive the following error estimates.
\begin{theorem}
Let $\boldsymbol{u} \in \boldsymbol{H}_{0}(\grad \curl, \Omega)$ be the solution of the problem \eqref{quadcurl}, and $\boldsymbol{u}_{h} \in\boldsymbol{W}_{h}$ be the solution of the mixed finite element method \eqref{dismixforNitsche1}-\eqref{dismixforNitsche2}. Then
\begin{align}
\label{priorierrorNitsche}
\interleave\boldsymbol{u}-\boldsymbol{u}_{h}\interleave_{\varepsilon,h} &\lesssim \inf\limits_{\boldsymbol{w}_{h} \in \boldsymbol{W}_{h}}\interleave\boldsymbol{u}-\boldsymbol{w}_h\interleave_{\varepsilon, h} \\
&\quad\; +\sup\limits_{\boldsymbol{v}_{h} \in \boldsymbol{W}_{h}} \frac{\varepsilon^{2}\tilde a_{h}(\boldsymbol{u}, \boldsymbol{v}_{h})+b(\boldsymbol{u}, \boldsymbol{v}_{h})-(\boldsymbol{f}, \boldsymbol{v}_{h})}{\interleave\boldsymbol{v}_{h}\interleave_{\varepsilon, h}}. \notag
\end{align}
Assume $\boldsymbol{u} \in \boldsymbol{H}^{k+1}(\Omega; \mathbb{R}^{3})$ and $\curl\boldsymbol{u} \in \boldsymbol{H}^{2}(\Omega; \mathbb{R}^{3})$. Then we have
\begin{equation*}
\interleave\boldsymbol{u}-\boldsymbol{u}_{h}\interleave_{\varepsilon,h} \lesssim h^{k+1}|\boldsymbol{u}|_{k+1}+ (h^2+h\varepsilon)|\curl\boldsymbol{u}|_{2}.
\end{equation*}
\end{theorem}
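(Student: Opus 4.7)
The plan is to mimic the two-step argument of Theorem~\ref{thm:priorierror} and Corollary~\ref{cor:priorierrornotrobust}, threading the Nitsche boundary terms through carefully. For the abstract Strang-type estimate \eqref{priorierrorNitsche}, I would apply the stated discrete stability (in the $\interleave\cdot\interleave_{\varepsilon,h}$ norm) with $\boldsymbol{\psi}_h = \boldsymbol{w}_h - \boldsymbol{u}_h$ and $\gamma_h = 0$ for an arbitrary $\boldsymbol{w}_h \in \boldsymbol{W}_h$, after observing that $\div\boldsymbol{f} = 0$ forces $\lambda_h = 0$ in \eqref{dismixforNitsche1}--\eqref{dismixforNitsche2} exactly as in \eqref{nolambda}. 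Inserting $\boldsymbol{u}$ into the numerator and splitting produces an interpolation piece $\varepsilon^{2}\tilde a_{h}(\boldsymbol{u}-\boldsymbol{w}_{h},\boldsymbol{v}_{h}) + b(\boldsymbol{u}-\boldsymbol{w}_{h},\boldsymbol{v}_{h}) + (\boldsymbol{w}_{h}-\boldsymbol{u}, \nabla\mu_{h})$, controlled by $\interleave\boldsymbol{u}-\boldsymbol{w}_{h}\interleave_{\varepsilon,h}(\interleave\boldsymbol{v}_{h}\interleave_{\varepsilon,h}+|\mu_{h}|_{1})$ via Cauchy--Schwarz and routine trace bounds on the Nitsche boundary pieces, plus the consistency residual $\varepsilon^{2}\tilde a_{h}(\boldsymbol{u},\boldsymbol{v}_{h})+b(\boldsymbol{u},\boldsymbol{v}_{h})-(\boldsymbol{f},\boldsymbol{v}_{h})$. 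A triangle inequality then delivers \eqref{priorierrorNitsche}.

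For the concrete bound, I would take $\boldsymbol{w}_{h} = I_{h}^{gc}\boldsymbol{u}$. The three $\|\cdot\|_{\varepsilon,h}$-pieces of $\interleave\boldsymbol{u} - I_{h}^{gc}\boldsymbol{u}\interleave_{\varepsilon,h}$ are handled exactly as in the proof of Corollary~\ref{cor:priorierrornotrobust}: \eqref{errorestimateIhgc1} gives $\|\boldsymbol{u}-I_{h}^{gc}\boldsymbol{u}\|_{0}\lesssim h^{k+1}|\boldsymbol{u}|_{k+1}$, while the commutative diagram \eqref{eq:stokescdncfem} combined with \eqref{errorestimateIhd1} gives $\|\curl\boldsymbol{u}-\curl I_{h}^{gc}\boldsymbol{u}\|_{0}+h|\curl\boldsymbol{u}-\curl I_{h}^{gc}\boldsymbol{u}|_{1,h}\lesssim h^{2}|\curl\boldsymbol{u}|_{2}$. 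The only genuinely new ingredient is the Nitsche boundary piece $\varepsilon^{2}\sum_{F\in\mathcal F_{h}^{\partial}}h_{F}^{-1}\|\curl(\boldsymbol{u}-I_{h}^{gc}\boldsymbol{u})\|_{F}^{2}$, which I would bound elementwise by the trace inequality $h_{F}^{-1}\|\boldsymbol{\phi}\|_{F}^{2}\lesssim h_{K}^{-2}\|\boldsymbol{\phi}\|_{0,K}^{2}+|\boldsymbol{\phi}|_{1,K}^{2}$ applied to $\boldsymbol{\phi}=\curl\boldsymbol{u}-I_{h}^{d}(\curl\boldsymbol{u})$, together with the localised form of \eqref{errorestimateIhd1}; this produces precisely the $\varepsilon h|\curl\boldsymbol{u}|_{2}$ contribution to the interpolation error.

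The step needing the most care is the consistency analysis. Reusing the integration-by-parts identity $\varepsilon^{2}a_{h}(\boldsymbol{u},\boldsymbol{v}_{h})+b(\boldsymbol{u},\boldsymbol{v}_{h})-(\boldsymbol{f},\boldsymbol{v}_{h})=\sum_{K}\varepsilon^{2}(\partial_{n}\curl\boldsymbol{u},\curl\boldsymbol{v}_{h})_{\partial K}$ from the proof of Lemma~\ref{lem:consistencyerror}, I split the right-hand side into interior- and boundary-face contributions. The decisive observation is that the boundary condition $\curl\boldsymbol{u}=\boldsymbol{0}$ on $\partial\Omega$ annihilates both the symmetric term $\sum_{F\in\mathcal F_{h}^{\partial}}(\curl\boldsymbol{u},\partial_{n}\curl\boldsymbol{v}_{h})_{F}$ and the penalty term $\sum_{F\in\mathcal F_{h}^{\partial}}\frac{\sigma}{h_{F}}(\curl\boldsymbol{u},\curl\boldsymbol{v}_{h})_{F}$ appearing in $\tilde a_{h}(\boldsymbol{u},\boldsymbol{v}_{h})$, while the remaining Nitsche correction $-\sum_{F\in\mathcal F_{h}^{\partial}}\varepsilon^{2}(\partial_{n}\curl\boldsymbol{u},\curl\boldsymbol{v}_{h})_{F}$ exactly cancels the boundary-face portion of $\sum_{K}(\cdot)_{\partial K}$. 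The consistency residual therefore collapses to the interior-face sum $\sum_{F\in\mathcal F_{h}^{i}}\varepsilon^{2}(\partial_{n}\curl\boldsymbol{u},\llbracket\curl\boldsymbol{v}_{h}\rrbracket)_{F}$, which, through the weak continuity of $\curl\boldsymbol{v}_{h}\in\boldsymbol{V}_{h}^{d}$ and the $L^{2}$-projection trick of Lemma~\ref{lem:consistencyerror}, is bounded by $h\varepsilon^{2}|\curl\boldsymbol{u}|_{2}|\curl\boldsymbol{v}_{h}|_{1,h}\leq h\varepsilon|\curl\boldsymbol{u}|_{2}\interleave\boldsymbol{v}_{h}\interleave_{\varepsilon,h}$. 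Substituting the interpolation bound $\interleave\boldsymbol{u}-I_{h}^{gc}\boldsymbol{u}\interleave_{\varepsilon,h}\lesssim h^{k+1}|\boldsymbol{u}|_{k+1}+(h^{2}+h\varepsilon)|\curl\boldsymbol{u}|_{2}$ and this consistency bound into \eqref{priorierrorNitsche} yields the claimed estimate.
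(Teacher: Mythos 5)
Your proposal is correct and follows essentially the same route as the paper, which proves this theorem by repeating the arguments of Theorem~\ref{thm:priorierror} and Corollary~\ref{cor:priorierrornotrobust} with the Nitsche terms carried along; in particular, your observation that $\curl\boldsymbol{u}=\boldsymbol{0}$ on $\partial\Omega$ kills the symmetric and penalty terms while the remaining Nitsche correction cancels the boundary-face part of the elementwise integration by parts is exactly the identity the paper uses later in establishing \eqref{consistencyerror3}. The interpolation and consistency bounds you assemble reproduce the stated estimate $h^{k+1}|\boldsymbol{u}|_{k+1}+(h^{2}+h\varepsilon)|\curl\boldsymbol{u}|_{2}$.
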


Finally we derive a optimal and robust error estimate of  $\interleave\boldsymbol{u}-\boldsymbol{u}_{h}\interleave_{\varepsilon,h}$.

\begin{lemma} 
We have for any $\boldsymbol{v}_{h} \in \boldsymbol{W}_{h}$ that
\begin{equation}
\label{consistencyerror3}
\varepsilon^{2}\tilde a_{h}(\boldsymbol{u}, \boldsymbol{v}_{h})+b(\boldsymbol{u}, \boldsymbol{v}_{h})-(\boldsymbol{f}, \boldsymbol{v}_{h}) \lesssim \varepsilon^{r-1/2}h^{1-r}\|\boldsymbol{f}\|_0\interleave \boldsymbol{v}_h\interleave_{\varepsilon,h}
\end{equation}
with $0\leq r\leq 1$.
\end{lemma}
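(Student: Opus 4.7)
The plan is to reduce the left-hand side to an interior-face residual of the form treated in Lemma~\ref{lem:consistencyerror}, then bound that residual in two complementary ways, and interpolate.

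First I reduce. Since $\boldsymbol{u}\in\boldsymbol{H}_0(\grad\curl,\Omega)$ forces $\curl\boldsymbol{u}|_{\partial\Omega}=\boldsymbol{0}$, the symmetric consistency term $\sum_{F\in\mathcal F_h^\partial}(\curl\boldsymbol{u},\partial_n\curl\boldsymbol{v}_h)_F$ and the penalty term $\sum_{F\in\mathcal F_h^\partial}\tfrac{\sigma}{h_F}(\curl\boldsymbol{u},\curl\boldsymbol{v}_h)_F$ of $\tilde a_h(\boldsymbol{u},\boldsymbol{v}_h)$ both vanish identically. Since $\boldsymbol{W}_h\subset\boldsymbol{H}_0(\curl,\Omega)$, testing the strong form \eqref{eq:curl2laplaceperturb} against $\boldsymbol{v}_h$ gives $(\boldsymbol{f},\boldsymbol{v}_h)=-\varepsilon^2(\Delta\curl\boldsymbol{u},\curl\boldsymbol{v}_h)+b(\boldsymbol{u},\boldsymbol{v}_h)$, and elementwise integration by parts rewrites $\varepsilon^2(\nabla_h\curl\boldsymbol{u},\nabla_h\curl\boldsymbol{v}_h)$ as $-\varepsilon^2(\Delta\curl\boldsymbol{u},\curl\boldsymbol{v}_h)+\varepsilon^2\sum_K(\partial_n\curl\boldsymbol{u},\curl\boldsymbol{v}_h)_{\partial K}$. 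Splitting the $\sum_K$ into interior- and boundary-face contributions, the latter exactly cancels the remaining asymmetric Nitsche term $-\varepsilon^2\sum_{F\in\mathcal F_h^\partial}(\partial_n\curl\boldsymbol{u},\curl\boldsymbol{v}_h)_F$, so that
\[
\varepsilon^{2}\tilde a_{h}(\boldsymbol{u},\boldsymbol{v}_{h})+b(\boldsymbol{u},\boldsymbol{v}_{h})-(\boldsymbol{f},\boldsymbol{v}_{h})=\varepsilon^{2}\sum_{F\in\mathcal F_h^i}(\partial_n\curl\boldsymbol{u},\llbracket\curl\boldsymbol{v}_h\rrbracket)_F=:\varepsilon^2 R_h.
\]

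Two endpoint estimates on $R_h$ then bracket the whole family. For $r=0$, I would reuse the argument of Lemma~\ref{lem:consistencyerror} verbatim: subtract $Q_F^0\partial_n\curl\boldsymbol{u}$ (annihilated by the $\boldsymbol{H}(\div)$-continuity of $\curl\boldsymbol{v}_h$ together with the $\boldsymbol{RT}(F)$-orthogonality of its tangential jump), apply Cauchy--Schwarz with Bramble--Hilbert oscillation bounds on both factors, and combine with the regularity \eqref{regularity} ($|\curl\boldsymbol{u}|_2\lesssim\varepsilon^{-3/2}\|\boldsymbol{f}\|_0$) and the embedding $\varepsilon|\curl\boldsymbol{v}_h|_{1,h}\leq\interleave\boldsymbol{v}_h\interleave_{\varepsilon,h}$ to reach $\varepsilon^2|R_h|\lesssim\varepsilon^{-1/2}h\|\boldsymbol{f}\|_0\interleave\boldsymbol{v}_h\interleave_{\varepsilon,h}$. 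For $r=1$, I would replace the trace estimate on $\partial_n\curl\boldsymbol{u}$ by an $H^{-1/2}$--$H^{1/2}$ face duality: the normal-trace theorem applied to $\nabla\curl\boldsymbol{u}\in\boldsymbol{L}^2$ yields $\|\partial_n\curl\boldsymbol{u}\|_{H^{-1/2}(F)}\lesssim|\curl\boldsymbol{u}|_{1,K}$ on either side, while the weak continuity of $\curl\boldsymbol{v}_h\in\boldsymbol{V}_h^d$ combined with the polynomial inverse inequality $\|\cdot\|_{H^{1/2}(F)}\lesssim h_F^{-1/2}\|\cdot\|_{L^2(F)}$ boosts the oscillation bound $\|\llbracket\curl\boldsymbol{v}_h\rrbracket\|_F\lesssim h_F^{1/2}(|\curl\boldsymbol{v}_h|_{1,K_1}+|\curl\boldsymbol{v}_h|_{1,K_2})$ to $\|\llbracket\curl\boldsymbol{v}_h\rrbracket\|_{H^{1/2}(F)}\lesssim|\curl\boldsymbol{v}_h|_{1,K_1}+|\curl\boldsymbol{v}_h|_{1,K_2}$. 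Face-wise Cauchy--Schwarz, global summation, and $|\curl\boldsymbol{u}|_1\lesssim\varepsilon^{-1/2}\|\boldsymbol{f}\|_0$ from \eqref{regularity} then deliver $\varepsilon^2|R_h|\lesssim\varepsilon^{1/2}\|\boldsymbol{f}\|_0\interleave\boldsymbol{v}_h\interleave_{\varepsilon,h}$.

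The intermediate $r\in(0,1)$ cases follow by H\"older interpolation: from $|I|\leq A$ and $|I|\leq B$ one has $|I|\leq A^{1-r}B^r$, and with $A=C\varepsilon^{-1/2}h\|\boldsymbol{f}\|_0\interleave\boldsymbol{v}_h\interleave_{\varepsilon,h}$ and $B=C\varepsilon^{1/2}\|\boldsymbol{f}\|_0\interleave\boldsymbol{v}_h\interleave_{\varepsilon,h}$ this produces exactly $\varepsilon^{r-1/2}h^{1-r}\|\boldsymbol{f}\|_0\interleave\boldsymbol{v}_h\interleave_{\varepsilon,h}$. I expect the main technical obstacle to be the sharp $r=1$ endpoint: the na\"ive pointwise trace $\|\partial_n\curl\boldsymbol{u}\|_F\lesssim h^{-1/2}|\curl\boldsymbol{u}|_{1,K}+h^{1/2}|\curl\boldsymbol{u}|_{2,K}$ leaves an unwanted $h|\curl\boldsymbol{u}|_2$ contribution that, after invoking \eqref{regularity}, pollutes the bound by an additive $h\varepsilon^{-1/2}$ term, and upgrading to the $H^{-1/2}$ boundary-trace norm of $\partial_n\curl\boldsymbol{u}$ is precisely what eliminates it and exposes the small factor $\varepsilon^{1/2}$ needed for the sharp endpoint.
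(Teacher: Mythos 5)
Your reduction of the left-hand side to the interior-face residual $\varepsilon^{2}\sum_{F\in\mathcal F_h^{i}}(\partial_{n}(\curl\boldsymbol{u}),\llbracket\curl\boldsymbol{v}_{h}\rrbracket)_F$ is correct and is exactly the paper's starting point, your $r=0$ endpoint reproduces the paper's Lemma~\ref{lem:consistencyerror} argument, and the H\"older interpolation between two endpoints is a legitimate way to organize the family of estimates. The genuine gap is at the $r=1$ endpoint. The bound $\|\partial_{n}(\curl\boldsymbol{u})\|_{H^{-1/2}(F)}\lesssim|\curl\boldsymbol{u}|_{1,K}$ is not available: $\partial_{n}(\curl\boldsymbol{u})$ is the normal trace of $\nabla(\curl\boldsymbol{u})$, and the normal-trace theorem for an $L^2$ tensor field requires control of its row-wise divergence, here $\Delta(\curl\boldsymbol{u})$. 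Keeping that term (properly scaled) reintroduces an additive contribution of order $h\,|\curl\boldsymbol{u}|_{2}\,|\curl\boldsymbol{v}_h|_{1,h}$, i.e.\ after invoking \eqref{regularity} an additive $h\varepsilon^{-1/2}\|\boldsymbol{f}\|_0\interleave\boldsymbol{v}_h\interleave_{\varepsilon,h}$, which is precisely the pollution you were trying to eliminate; for $h>\varepsilon$ it dominates $\varepsilon^{1/2}$ and destroys the $r=1$ endpoint, hence the uniform $O(h^2)$ rate downstream.

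The obstacle you flag is in fact an artifact of pairing the wrong two one-sided estimates, and the paper's fix is more elementary than a duality argument. Keep the Bramble--Hilbert bound $\|\partial_{n}(\curl\boldsymbol{u})-Q_F^0\partial_{n}(\curl\boldsymbol{u})\|_{0,F}\lesssim h^{1/2}|\curl\boldsymbol{u}|_{2,K}$ on the data factor (this contains no $h^{-1/2}|\curl\boldsymbol{u}|_{1,K}$ term, so there is nothing to ``avoid''), and move the entire inverse-inequality burden onto the piecewise-polynomial factor: $\|\curl\boldsymbol{v}_h-Q_F^0\curl\boldsymbol{v}_h\|_{0,F}\lesssim h^{1/2-r}\|\curl\boldsymbol{v}_h\|_{0,K}^{r}|\curl\boldsymbol{v}_h|_{1,K}^{1-r}$. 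The product is $h^{1-r}|\curl\boldsymbol{u}|_{2,K}\|\curl\boldsymbol{v}_h\|_{0,K}^{r}|\curl\boldsymbol{v}_h|_{1,K}^{1-r}$, which after summation, the bound $\|\curl\boldsymbol{v}_h\|_0^{r}|\curl\boldsymbol{v}_h|_{1,h}^{1-r}\leq\varepsilon^{r-1}\interleave\boldsymbol{v}_h\interleave_{\varepsilon,h}$, and $\varepsilon^{2}|\curl\boldsymbol{u}|_{2}\lesssim\varepsilon^{1/2}\|\boldsymbol{f}\|_0$ from \eqref{regularity}, yields \eqref{consistencyerror3} for all $r\in[0,1]$ at once; in particular at $r=1$ the two half-powers of $h$ cancel and one gets the clean $\varepsilon^{1/2}\|\boldsymbol{f}\|_0\interleave\boldsymbol{v}_h\interleave_{\varepsilon,h}$ with no additive remainder. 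You should replace your $H^{-1/2}$--$H^{1/2}$ step with this.
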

\begin{proof}
Applying the similar argument as in the proof of Lemma~\ref{lem:consistencyerror}, we have
\begin{align*}
&\quad \;\varepsilon^{2}\tilde a_{h}(\boldsymbol{u}, \boldsymbol{v}_{h})+b(\boldsymbol{u}, \boldsymbol{v}_{h})-(\boldsymbol{f}, \boldsymbol{v}_{h}) \\
&= \sum_{K \in \mathcal{T}_{h}} \sum_{F \in \mathcal{F}(K)\cap\mathcal{F}_h^i}\varepsilon^{2}\left(\partial_{n}(\curl \boldsymbol{u})-Q_{F}^{0} \partial_{n}(\curl \boldsymbol{u}), \curl \boldsymbol{v}_{h}-Q_{F}^{0} \curl \boldsymbol{v}_{h}\right)_{F}.
\end{align*}
By the error estimate of $Q_{F}^{0}$ and the inverse inequality,
\begin{align*}
\varepsilon^{2}\tilde a_{h}(\boldsymbol{u}, \boldsymbol{v}_{h})+b(\boldsymbol{u}, \boldsymbol{v}_{h})-(\boldsymbol{f}, \boldsymbol{v}_{h}) &\lesssim \varepsilon^{2}h^{1-r}|\curl\boldsymbol{u}|_2\|\curl\boldsymbol{v}_h\|_0^r|\curl\boldsymbol{v}_h|_{1,h}^{1-r}\\
&\lesssim \varepsilon^{r+1}h^{1-r}|\curl\boldsymbol{u}|_2\interleave \boldsymbol{v}_h\interleave_{\varepsilon,h},
\end{align*}
which together with \eqref{regularity} indicates \eqref{consistencyerror3}.
\end{proof}

\begin{lemma}
Assume $\boldsymbol{u}_0\in H^{s+1}(\Omega;\mathbb R^3)$ with $1\leq s\leq k$.
We have
\begin{equation}\label{errorestimateIhd5}
\interleave\boldsymbol{u}-I_{h}^{gc}\boldsymbol{u}\interleave_{\varepsilon,h}\lesssim \varepsilon^{r-1/2}h^{1-r}\|\boldsymbol{f}\|_0+ h^s(h|\boldsymbol{u}_0|_{s+1}+|\curl\boldsymbol{u}_0|_s)
\end{equation}
with $0\leq r\leq 1$.
\end{lemma}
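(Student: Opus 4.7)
The plan is to estimate each of the four squared ingredients of $\interleave\boldsymbol{u}-I_{h}^{gc}\boldsymbol{u}\interleave_{\varepsilon,h}^{2}$ separately, namely the $L^{2}$ norm of $\boldsymbol{u}-I_{h}^{gc}\boldsymbol{u}$, the $L^{2}$ norm and elementwise $H^{1}$-seminorm of $\curl(\boldsymbol{u}-I_{h}^{gc}\boldsymbol{u})$, and the weighted boundary-face term $\varepsilon^{2}\sum_{F\in\mathcal{F}_{h}^{\partial}}h_{F}^{-1}\|\curl(\boldsymbol{u}-I_{h}^{gc}\boldsymbol{u})\|_{F}^{2}$. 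The commutative diagram \eqref{eq:stokescdncfem} identifies $\curl(I_{h}^{gc}\boldsymbol{u})=I_{h}^{d}(\curl\boldsymbol{u})$, which converts the last three ingredients into estimates on $\curl\boldsymbol{u}-I_{h}^{d}(\curl\boldsymbol{u})$ that are already largely handled by \eqref{errorestimateIhd3}--\eqref{errorestimateIhd4}.

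For the $L^{2}$ piece I would decompose $\boldsymbol{u}=\boldsymbol{u}_{0}+(\boldsymbol{u}-\boldsymbol{u}_{0})$, apply the low-regularity bound \eqref{errorestimateIhgc2} to the boundary-layer part together with the regularity $|\boldsymbol{u}-\boldsymbol{u}_{0}|_{1}\lesssim\varepsilon^{1/2}\|\boldsymbol{f}\|_{0}$ from \eqref{regularity}, and apply the higher-order bound \eqref{errorestimateIhgc1} with $j=s+1$ to the smooth part $\boldsymbol{u}_{0}$. This yields $\|\boldsymbol{u}-I_{h}^{gc}\boldsymbol{u}\|_{0}\lesssim h\varepsilon^{1/2}\|\boldsymbol{f}\|_{0}+h^{s+1}|\boldsymbol{u}_{0}|_{s+1}$. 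Since $h^{r}\varepsilon^{1-r}\leq 1$ for $h,\varepsilon\leq 1$ and $r\in[0,1]$, the first summand is absorbed by $\varepsilon^{r-1/2}h^{1-r}\|\boldsymbol{f}\|_{0}$, while the second matches the $h^{s}\cdot h|\boldsymbol{u}_{0}|_{s+1}$ contribution of the target.

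The $L^{2}$ and interior-seminorm curl ingredients follow immediately from \eqref{errorestimateIhd3} and \eqref{errorestimateIhd4}, which are already stated with the desired $\varepsilon^{r-1/2}h^{1-r}\|\boldsymbol{f}\|_{0}$ dependence. For the boundary-face piece I would apply the multiplicative trace inequality elementwise on each tetrahedron touching $\partial\Omega$ and then split $\curl\boldsymbol{u}=\curl\boldsymbol{u}_{0}+\curl(\boldsymbol{u}-\boldsymbol{u}_{0})$ once more. The smooth contribution is handled by the standard $I_{h}^{d}$-approximation estimates \eqref{errorestimateIhd1} with $j=s$, and the boundary-layer contribution is treated by interpolation between the sharp bounds $\|\curl(\boldsymbol{u}-\boldsymbol{u}_{0})\|_{0}\lesssim\varepsilon^{1/2}\|\boldsymbol{f}\|_{0}$ and $|\curl(\boldsymbol{u}-\boldsymbol{u}_{0})|_{1}\lesssim\varepsilon^{-1/2}\|\boldsymbol{f}\|_{0}$ coming from \eqref{regularity}, mimicking the derivation of \eqref{errorestimateIhd3}--\eqref{errorestimateIhd4}.

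The main obstacle will be the boundary-face term, where uniformity in $r\in[0,1]$ is delicate: a naive coupling of the trace inequality with \eqref{errorestimateIhd3} produces a contribution of the form $\varepsilon^{2r+1}h^{-2r}\|\boldsymbol{f}\|_{0}^{2}$, which differs from the target $(\varepsilon^{r-1/2}h^{1-r}\|\boldsymbol{f}\|_{0})^{2}$ by a spurious factor $(\varepsilon/h)^{2}$. To avoid this I plan to invoke the multiplicative trace $\|w\|_{F}^{2}\lesssim\|w\|_{0,K}|w|_{1,K}+h_{K}^{-1}\|w\|_{0,K}^{2}$ \emph{before} interpolating the regularity of the boundary-layer piece, so that the $h^{-1}$ arising from $h_{F}^{-1}$ is paired with the $H^{1}$-seminorm rather than with the $L^{2}$ norm; then Young's inequality applied to the product $\|\cdot\|_{0,K}|\cdot|_{1,K}$ with the $L^{2}$-$H^{1}$ bounds above redistributes the powers of $\varepsilon$ and $h$ to yield a clean $\varepsilon^{2r-1}h^{2-2r}\|\boldsymbol{f}\|_{0}^{2}$ for the boundary-layer part and $h^{2s}|\curl\boldsymbol{u}_{0}|_{s}^{2}$ for the smooth part, uniformly in $r$.
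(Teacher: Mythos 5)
Your treatment of the first three ingredients --- the $L^2$ error of $\boldsymbol{u}-I_{h}^{gc}\boldsymbol{u}$ via the splitting $\boldsymbol{u}=\boldsymbol{u}_0+(\boldsymbol{u}-\boldsymbol{u}_0)$ with \eqref{errorestimateIhgc1}--\eqref{errorestimateIhgc2}, and the two curl terms via the commutativity $\curl(I_{h}^{gc}\boldsymbol{u})=I_{h}^{d}(\curl\boldsymbol{u})$ and \eqref{errorestimateIhd3}--\eqref{errorestimateIhd4} --- is exactly the paper's argument. The gap is in the boundary-face term, and the repair you sketch does not close it. The multiplicative trace inequality $\|w\|_F^2\lesssim h_K^{-1}\|w\|_{0,K}^2+\|w\|_{0,K}|w|_{1,K}$ necessarily contains the term $h_K^{-1}\|w\|_{0,K}^2$ (it is sharp for constant $w$), so after multiplying by $h_F^{-1}$ you always face $h^{-2}\|w\|_{0,K}^2$; no reordering of ``trace first, interpolate second'' and no application of Young's inequality removes it. For the boundary-layer piece, the only regularity available is $\|\curl(\boldsymbol{u}-\boldsymbol{u}_0)\|_0\lesssim\varepsilon^{1/2}\|\boldsymbol{f}\|_0$ and $|\curl(\boldsymbol{u}-\boldsymbol{u}_0)|_1\lesssim\varepsilon^{-1/2}\|\boldsymbol{f}\|_0$, so the interpolation error gains at most one power of $h$ over the $H^1$-seminorm; consequently $\varepsilon^2h^{-2}\|w\|_{0}^2\lesssim\varepsilon^{2\rho+1}h^{-2\rho}\|\boldsymbol{f}\|_0^2$ for $\rho\in[0,1]$, which exceeds the target $\varepsilon^{2r-1}h^{2-2r}\|\boldsymbol{f}\|_0^2$ by the very factor $(\varepsilon/h)^2$ you identified, for every admissible $\rho$ once $h\lesssim\varepsilon$. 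The smooth part has the same defect: you obtain $\varepsilon^2h^{2s-2}|\curl\boldsymbol{u}_0|_s^2$, which is not controlled by $h^{2s}|\curl\boldsymbol{u}_0|_s^2$ unless $\varepsilon\lesssim h$. In particular your argument cannot recover the $r=0$ case of \eqref{errorestimateIhd5}, which is precisely what yields the $O(h)$ rate for fixed $\varepsilon$.

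The paper avoids this by \emph{not} splitting $\curl\boldsymbol{u}$ in the boundary-face term: it applies \eqref{errorestimateIhd1} to the whole of $\curl\boldsymbol{u}$, interpolating between $j=1$ and $j=2$, so that $\|\curl\boldsymbol{u}-I_{h}^{d}(\curl\boldsymbol{u})\|_{0}\lesssim h^{2-r}|\curl\boldsymbol{u}|_1^r|\curl\boldsymbol{u}|_2^{1-r}$ carries \emph{two} extra powers of $h$ and thus survives the $h^{-2}$ from the trace inequality:
\[
\varepsilon^2\sum_{F\in\mathcal F_h^{\partial}}h_F^{-1}\|\curl\boldsymbol{u}-I_{h}^{d}(\curl\boldsymbol{u})\|_F^2
\lesssim\varepsilon^2h^{2-2r}|\curl\boldsymbol{u}|_1^{2r}|\curl\boldsymbol{u}|_2^{2-2r}
\lesssim\varepsilon^{2r-1}h^{2-2r}\|\boldsymbol{f}\|_0^2,
\]
the last step by \eqref{regularity}. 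The point you are missing is that the global prefactor $\varepsilon^{2}$ in front of the boundary term is exactly enough to pay for the $\varepsilon^{-3/2}$ cost of $|\curl\boldsymbol{u}|_2$, so the full $H^2$ regularity of $\curl\boldsymbol{u}$ may be used here, and no $|\curl\boldsymbol{u}_0|_s$ contribution arises from this term at all.
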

\begin{proof}
By \eqref{errorestimateIhgc1}-\eqref{errorestimateIhgc2} and \eqref{regularity},
\begin{align*}
\|\boldsymbol{u}-I_{h}^{gc}\boldsymbol{u}\|_0&\leq \|(\boldsymbol{u}-\boldsymbol{u}_0)-I_{h}^{gc}(\boldsymbol{u}-\boldsymbol{u}_0)\|_0+\|\boldsymbol{u}_0-I_{h}^{gc}\boldsymbol{u}_0\|_0 \\
&\lesssim h|\boldsymbol{u}-\boldsymbol{u}_0|_1+h^{s+1}|\boldsymbol{u}_0|_{s+1} \lesssim h\varepsilon^{1/2}\|\boldsymbol{f}\|_{0}+h^{s+1}|\boldsymbol{u}_0|_{s+1}.
\end{align*}
It follows from complex \eqref{eq:stokescdncfem}, the trace inequality, \eqref{errorestimateIhd1} and \eqref{regularity} that
\begin{align*}
\varepsilon^{2}\sum_{F\in\mathcal F_h^{\partial}}\frac{1}{h_F}\|\curl(\boldsymbol{u}-I_{h}^{gc}\boldsymbol{u})\|_F^2&=\varepsilon^{2}\sum_{F\in\mathcal F_h^{\partial}}\frac{1}{h_F}\|\curl\boldsymbol{u}-I_{h}^{d}(\curl\boldsymbol{u})\|_F^2 \\
&\lesssim \varepsilon^2 h^{1-r}|\curl\boldsymbol{u}|_1^r|\curl\boldsymbol{u}|_2^{1-r}\lesssim \varepsilon^{r-1/2} h^{1-r}\|\boldsymbol{f}\|_0.
\end{align*}
Thus we conclude \eqref{errorestimateIhd5} from \eqref{errorestimateIhd3}-\eqref{errorestimateIhd4} and the last two inequalities.
\end{proof}

\begin{theorem}
Let $\boldsymbol{u} \in \boldsymbol{H}_{0}(\grad \curl, \Omega)$ be the solution of the problem \eqref{quadcurl}, $\boldsymbol{u}_{0} \in \boldsymbol{H}_{0}(\curl, \Omega)$ be the solution of double curl equation \eqref{reduced}, and $\boldsymbol{u}_{h} \in \boldsymbol{W}_{h}$ be the solution of the mixed finite element methods \eqref{dismixfor1}-\eqref{dismixfor2}. Assume $\boldsymbol{u}_0\in H^{k+1}(\Omega;\mathbb R^3)$ and $\curl\boldsymbol{u}_0\in H^2(\Omega;\mathbb R^3)$.
Then we have the robust error estimates
\begin{equation}
\label{priorierrorrobustNitsche}
\interleave\boldsymbol{u}-\boldsymbol{u}_{h}\interleave_{\varepsilon,h} \lesssim \varepsilon^{r-1/2}h^{1-r}\|\boldsymbol{f}\|_0+ h^{k+1}|\boldsymbol{u}_0|_{k+1}+h^2|\curl\boldsymbol{u}_0|_2
\end{equation}
with $0\leq r\leq 1$, and
\begin{equation}
\label{priorierrorrobustNitscheu0}
\|\boldsymbol{u}_0-\boldsymbol{u}_{h}\|_{\varepsilon,h} \lesssim \varepsilon^{1/2}\|\boldsymbol{f}\|_0+ h^{k+1}|\boldsymbol{u}_0|_{k+1}+h^2|\curl\boldsymbol{u}_0|_2.
\end{equation}
\end{theorem}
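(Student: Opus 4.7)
The plan is to use the already-established quasi-optimal error estimate \eqref{priorierrorNitsche} as the master inequality, and to control its two right-hand side terms by the interpolation estimate \eqref{errorestimateIhd5} and the consistency estimate \eqref{consistencyerror3} respectively. Specifically, I would choose $\boldsymbol{w}_h = I_h^{gc}\boldsymbol{u}$ in the infimum on the right-hand side of \eqref{priorierrorNitsche}. Applying \eqref{errorestimateIhd5} with $s=k$ and then replacing the $h^k|\curl\boldsymbol{u}_0|_k$ term by the optimal $h^2|\curl\boldsymbol{u}_0|_2$ (which requires re-running the proof of \eqref{errorestimateIhd5} splitting $\curl\boldsymbol{u}=\curl(\boldsymbol{u}-\boldsymbol{u}_0)+\curl\boldsymbol{u}_0$ and applying \eqref{errorestimateIhd1} with $j=2$ to the smooth $\curl\boldsymbol{u}_0$-part while using the regularity \eqref{regularity} on the $\boldsymbol{u}-\boldsymbol{u}_0$-part) produces the bound
\[
\interleave\boldsymbol{u}-I_h^{gc}\boldsymbol{u}\interleave_{\varepsilon,h}\lesssim \varepsilon^{r-1/2}h^{1-r}\|\boldsymbol{f}\|_0 + h^{k+1}|\boldsymbol{u}_0|_{k+1}+h^2|\curl\boldsymbol{u}_0|_2.
\]

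Next, I would bound the consistency term directly by \eqref{consistencyerror3}, which already gives $\varepsilon^{r-1/2}h^{1-r}\|\boldsymbol{f}\|_0\,\interleave\boldsymbol{v}_h\interleave_{\varepsilon,h}$, hence contributing $\varepsilon^{r-1/2}h^{1-r}\|\boldsymbol{f}\|_0$ to the supremum term in \eqref{priorierrorNitsche}. Summing the two contributions and keeping $r\in[0,1]$ arbitrary yields \eqref{priorierrorrobustNitsche}.

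For the second bound \eqref{priorierrorrobustNitscheu0}, I would pass from $\boldsymbol{u}_h$ to $\boldsymbol{u}_0$ by the triangle inequality
\[
\|\boldsymbol{u}_0-\boldsymbol{u}_h\|_{\varepsilon,h}\le \|\boldsymbol{u}_0-\boldsymbol{u}\|_{\varepsilon,h}+\|\boldsymbol{u}-\boldsymbol{u}_h\|_{\varepsilon,h}\le \|\boldsymbol{u}_0-\boldsymbol{u}\|_{\varepsilon,h}+\interleave\boldsymbol{u}-\boldsymbol{u}_h\interleave_{\varepsilon,h},
\]
since the $\|\cdot\|_{\varepsilon,h}$ norm is dominated by $\interleave\cdot\interleave_{\varepsilon,h}$. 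The first piece is controlled using the regularity estimates \eqref{eq:regularityu0} and \eqref{regularity}: the $L^2$-norm and $\curl$ pieces give $\varepsilon^{1/2}\|\boldsymbol{f}\|_0$, while $\varepsilon|\curl(\boldsymbol{u}-\boldsymbol{u}_0)|_{1,h}$ is bounded by $\varepsilon|\curl\boldsymbol{u}|_1+\varepsilon|\curl\boldsymbol{u}_0|_1\lesssim \varepsilon^{1/2}\|\boldsymbol{f}\|_0$. For the second piece I would apply \eqref{priorierrorrobustNitsche} with the choice $r=1$ to produce an $\varepsilon^{1/2}\|\boldsymbol{f}\|_0$ term uniformly in $h$.

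The main obstacle is bookkeeping: namely upgrading the single-parameter bound in \eqref{errorestimateIhd5} to simultaneously achieve $h^{k+1}$ on the $\boldsymbol{u}_0$ part and $h^2$ on the $\curl\boldsymbol{u}_0$ part under the assumed regularity. This requires applying the interpolation estimates componentwise — \eqref{errorestimateIhgc1} at order $j=k+1$ for $\boldsymbol{u}_0$, and the commutative diagram together with \eqref{errorestimateIhd1} at $j=2$ for $\curl\boldsymbol{u}_0$, plus a careful treatment of the Nitsche face penalty term $\varepsilon^2\sum_{F\in\mathcal F_h^{\partial}}h_F^{-1}\|\curl(\boldsymbol{u}-I_h^{gc}\boldsymbol{u})\|_F^2$ via the multiplicative trace inequality, exactly as in the proof of \eqref{errorestimateIhd5}. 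Once this is sorted out, the rest is assembly.
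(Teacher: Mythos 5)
Your proposal is correct and follows essentially the same route as the paper, which likewise assembles \eqref{priorierrorNitsche} with the consistency bound \eqref{consistencyerror3} and the interpolation bound \eqref{errorestimateIhd5}, and obtains \eqref{priorierrorrobustNitscheu0} from \eqref{priorierrorrobustNitsche} at $r=1$ together with the regularities \eqref{eq:regularityu0}--\eqref{regularity}. The componentwise bookkeeping you flag --- using \eqref{errorestimateIhgc1} at order $k+1$ for $\boldsymbol{u}_0$ while taking $s=2$ in \eqref{errorestimateIhd3} for $\curl\boldsymbol{u}_0$ --- is a worthwhile refinement, since for $k=1$ the single-parameter form of \eqref{errorestimateIhd5} only yields $h|\curl\boldsymbol{u}_0|_1$ rather than the stated $h^2|\curl\boldsymbol{u}_0|_2$, a point the paper's one-line citation of \eqref{errorestimateIhd5} does not spell out.
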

\begin{proof}
Estimate \eqref{priorierrorrobustNitsche} holds from \eqref{priorierrorNitsche} and \eqref{consistencyerror3}-\eqref{errorestimateIhd5}. And \eqref{priorierrorrobustNitscheu0} follows from \eqref{priorierrorrobustNitsche} and regularities \eqref{eq:regularityu0}-\eqref{regularity}.
\end{proof}

\begin{remark}\rm
For a fixed $\varepsilon>0$, estimate \eqref{priorierrorrobustNitsche} tells us $\interleave\boldsymbol{u}-\boldsymbol{u}_{h}\interleave_{\varepsilon,h} \lesssim h$ by choosing $r=0$.
When $\varepsilon$ tends to zero, estimate \eqref{priorierrorrobustNitsche} implies $\interleave\boldsymbol{u}-\boldsymbol{u}_{h}\interleave_{\varepsilon,h} \lesssim h^2$ by choosing $r=1$, which is robust with respect to $\varepsilon$ and optimal in mesh size $h$.
\end{remark}

\section{Numerical results}\label{sec:numericalresults}
In this section, we perform numerical experiments to demonstrate the theoretical results of the mixed finite element methods \eqref{dismixfor1}-\eqref{dismixfor2} and \eqref{dismixforNitsche1}-\eqref{dismixforNitsche2}. Let $\Omega=(0,1)^3$. Choose the exact solution of double curl equation \eqref{reduced}
\[
\boldsymbol{u}_0 = \curl\begin{pmatrix}
0\\
0\\
\sin^2(\pi x)\sin^2(\pi y)\sin^2(\pi z)
\end{pmatrix},
\]
and let $\boldsymbol{f}=\curl^{2} \boldsymbol{u}_{0}$. The analytic expression of the exact solution $\boldsymbol{u}$ of the quad-curl singular perturbation problem \eqref{quadcurl} can not be explicitly given.
We take uniform triangulations on $\Omega$. Set $k=1$.

In the first experiment, we apply the lowest order Huang element in \cite{Huang2020} to solve the double curl problem \eqref{reduced}, i.e. mixed finite element method \eqref{dismixforNitsche1}-\eqref{dismixforNitsche2} for $\varepsilon=0$ with the finite element replaced by the Huang element.
Numerical errors $\|\boldsymbol{u}_0-\boldsymbol{u}_h\|_0$ and $\|\curl_h(\boldsymbol{u}_0-\boldsymbol{u}_h)\|_0$ with respect to $h$ are shown in Table~\ref{table:Huangerrorenery}.
We can see that both $\|\boldsymbol{u}_0-\boldsymbol{u}_h\|_0$ and $\|\curl_h(\boldsymbol{u}_0-\boldsymbol{u}_h)\|_0$ have no convergence rates, similarly as the Morley element method for Poisson equation \cite[Section 3]{NilssenTaiWinther2001}. Indeed, the convergence would deteriorate if using nonconforming finite elements to discretize the double curl problem, as illustrated in \cite[Section 7.9]{BoffiBrezziFortin2013}.
\begin{table}[htbp]
\caption{Errors $\|\boldsymbol{u}_0-\boldsymbol{u}_h\|_0$ and  $\|\curl_h(\boldsymbol{u}_0-\boldsymbol{u}_h)\|_0$ for double curl equation discretized by the lowest order Huang element.}\label{table:Huangerrorenery}
\centering
\begin{tabular}{lllll}
\hline\noalign{\smallskip}
$h$ & $\|\boldsymbol{u}_0-\boldsymbol{u}_h\|_0$ & order & $\|\curl_h(\boldsymbol{u}_0-\boldsymbol{u}_h)\|_0$ & order \\
\noalign{\smallskip}\hline\noalign{\smallskip}
$2^{-1}$ & 1.017E+00 & $-$     & 8.001E+00 & $-$ \\
$2^{-2}$ & 1.690E+00 & $-0.73$  & 9.575E+00 & $-0.26$ \\
$2^{-3}$ & 1.746E+00 & $-0.05$  & 1.068E+01 & $-0.16$ \\
$2^{-4}$ & 1.764E+00 & $-0.02$  & 1.101E+01 & $-0.04$ \\
$2^{-5}$ & 1.769E+00 & \;\;\,0.00  & 1.110E+01 & $-0.01$ \\
\noalign{\smallskip}\hline
\end{tabular}
\end{table}

Next we test the robustness of the mixed finite element method \eqref{dismixfor1}-\eqref{dismixfor2}.
Numerical errors $\|\boldsymbol{u}_0-\boldsymbol{u}_{h0}\|_0$,  $\|\curl(\boldsymbol{u}_0-\boldsymbol{u}_{h0})\|_0$ and $\|\boldsymbol{u}_0-\boldsymbol{u}_{h0}\|_{\varepsilon,h}$ for $\varepsilon=0$ and $\varepsilon=10^{-3}$ are shown in Table~\ref{table:mixfemeps0error} and Table~\ref{table:mixfemeps1E-3error} respectively.
By Table~\ref{table:mixfemeps0error} and Table~\ref{table:mixfemeps1E-3error}, numerically $\|\boldsymbol{u}_0-\boldsymbol{u}_{h0}\|_0\eqsim O(h)$,  $\|\curl(\boldsymbol{u}_0-\boldsymbol{u}_{h0})\|_0\eqsim O(h^{0.5})$ and $\|\boldsymbol{u}_0-\boldsymbol{u}_{h0}\|_{\varepsilon,h}\eqsim O(h^{0.5})$, which coincide with the sharp estimate \eqref{priorierrorrobustu0}. 
\begin{table}[htbp]
\caption{Errors $\|\boldsymbol{u}_0-\boldsymbol{u}_{h0}\|_0$,  $\|\curl(\boldsymbol{u}_0-\boldsymbol{u}_{h0})\|_0$ and $\|\boldsymbol{u}_0-\boldsymbol{u}_{h0}\|_{\varepsilon,h}$ for mixed finite element method \eqref{dismixfor1}-\eqref{dismixfor2} for $k=1$ and $\varepsilon=0$.}\label{table:mixfemeps0error}
\centering
\begin{tabular}{lllllll}
\hline\noalign{\smallskip}
$h$   & $\|\boldsymbol{u}_0-\boldsymbol{u}_{h0}\|_0$ & order & $\|\curl(\boldsymbol{u}_0-\boldsymbol{u}_{h0})\|_0$ & order & $\|\boldsymbol{u}_0-\boldsymbol{u}_{h0}\|_{\varepsilon,h}$ & order \\
\noalign{\smallskip}\hline\noalign{\smallskip}
$2^{-1}$ & 5.142E$-01$ & $-$   & 5.386E+00 & $-$   & 5.410E+00 & $-$ \\
$2^{-2}$ & 1.526E$-01$ & 1.75  & 2.573E+00 & 1.07  & 2.577E+00 & 1.07 \\
$2^{-3}$ & 5.390E$-02$ & 1.50  & 1.495E+00 & 0.78  & 1.496E+00 & 0.78 \\
$2^{-4}$ & 2.229E$-02$ & 1.27  & 9.933E$-01$ & 0.59  & 9.936E$-01$ & 0.59 \\
\noalign{\smallskip}\hline
\end{tabular}
\end{table}
\begin{table}[htbp]
\caption{Errors $\|\boldsymbol{u}_0-\boldsymbol{u}_{h0}\|_0$,  $\|\curl(\boldsymbol{u}_0-\boldsymbol{u}_{h0})\|_0$ and $\|\boldsymbol{u}_0-\boldsymbol{u}_{h0}\|_{\varepsilon,h}$ for mixed finite element method \eqref{dismixfor1}-\eqref{dismixfor2} for $k=1$ and $\varepsilon=10^{-3}$.}\label{table:mixfemeps1E-3error}
\centering
\begin{tabular}{lllllll}
\hline\noalign{\smallskip}
$h$   & $\|\boldsymbol{u}_0-\boldsymbol{u}_{h0}\|_0$ & order & $\|\curl(\boldsymbol{u}_0-\boldsymbol{u}_{h0})\|_0$ & order & $\|\boldsymbol{u}_0-\boldsymbol{u}_{h0}\|_{\varepsilon,h}$ & order \\
\noalign{\smallskip}\hline\noalign{\smallskip}
$2^{-1}$ & 5.143E-01 & $-$   & 5.386E+00 & $-$   & 5.412E+00 & $-$ \\
$2^{-2}$ & 1.529E-01 & 1.75  & 2.573E+00 & 1.07  & 2.583E+00 & 1.07  \\
$2^{-3}$ & 5.451E-02 & 1.49  & 1.495E+00 & 0.78  & 1.509E+00 & 0.78  \\
$2^{-4}$ & 2.358E-02 & 1.21  & 9.943E-01 & 0.59  & 1.027E+00 & 0.56  \\
\noalign{\smallskip}\hline
\end{tabular}
\end{table}

At last we numerically examine the modified mixed finite element method \eqref{dismixforNitsche1}-\eqref{dismixforNitsche2} by the Nitsche's technique.
Numerical errors $\|\boldsymbol{u}_0-\boldsymbol{u}_{h}\|_0$,  $\|\curl(\boldsymbol{u}_0-\boldsymbol{u}_{h})\|_0$ and $\|\boldsymbol{u}_0-\boldsymbol{u}_{h}\|_{\varepsilon,h}$ for $\varepsilon=0$ and $\varepsilon=10^{-3}$ are shown in Table~\ref{table:mixfemNitscheeps0error} and Table~\ref{table:mixfemNitscheeps1E-3error} respectively, from which we can observe that they all achieve the optimal convergence rate $O(h^2)$ numerically and agree with the theoretical estimate \eqref{priorierrorrobustNitscheu0}. 
\begin{table}[htbp]
\caption{Errors $\|\boldsymbol{u}_0-\boldsymbol{u}_{h}\|_0$,  $\|\curl(\boldsymbol{u}_0-\boldsymbol{u}_{h})\|_0$ and $\|\boldsymbol{u}_0-\boldsymbol{u}_{h}\|_{\varepsilon,h}$ for mixed finite element method \eqref{dismixforNitsche1}-\eqref{dismixforNitsche2} for $k=1$ and $\varepsilon=0$.}\label{table:mixfemNitscheeps0error}
\centering
\begin{tabular}{lllllll}
\hline\noalign{\smallskip}
$h$   & $\|\boldsymbol{u}_0-\boldsymbol{u}_{h}\|_0$ & order & $\|\curl(\boldsymbol{u}_0-\boldsymbol{u}_{h})\|_0$ & order & $\|\boldsymbol{u}_0-\boldsymbol{u}_{h}\|_{\varepsilon,h}$ & order \\
\noalign{\smallskip}\hline\noalign{\smallskip}
$2^{-1}$ & 4.660E$-01$ & $-$   & 4.553E+00 & $-$   & 4.576E+00 & $-$ \\
$2^{-2}$ & 1.193E$-01$ & 1.97  & 1.445E+00 & 1.66  & 1.450E+00 & 1.66  \\
$2^{-3}$ & 3.203E$-02$ & 1.90  & 4.189E$-01$ & 1.79  & 4.201E$-01$ & 1.79  \\
$2^{-4}$ & 8.277E$-03$ & 1.95  & 1.105E$-01$ & 1.92  & 1.108E$-01$ & 1.92  \\
\noalign{\smallskip}\hline
\end{tabular}
\end{table}
\begin{table}[htbp]
\caption{Errors $\|\boldsymbol{u}_0-\boldsymbol{u}_{h}\|_0$,  $\|\curl(\boldsymbol{u}_0-\boldsymbol{u}_{h})\|_0$ and $\|\boldsymbol{u}_0-\boldsymbol{u}_{h}\|_{\varepsilon,h}$ for mixed finite element method \eqref{dismixforNitsche1}-\eqref{dismixforNitsche2} for $k=1$ and $\varepsilon=10^{-3}$.}\label{table:mixfemNitscheeps1E-3error}
\centering
\begin{tabular}{lllllll}
\hline\noalign{\smallskip}
$h$   & $\|\boldsymbol{u}_0-\boldsymbol{u}_{h}\|_0$ & order & $\|\curl(\boldsymbol{u}_0-\boldsymbol{u}_{h})\|_0$ & order & $\|\boldsymbol{u}_0-\boldsymbol{u}_{h}\|_{\varepsilon,h}$ & order \\
\noalign{\smallskip}\hline\noalign{\smallskip}
$2^{-1}$ & 4.661E$-01$ & $-$   & 4.553E+00 & $-$   & 4.578E+00 & $-$ \\
$2^{-2}$ & 1.193E$-01$ & 1.97  & 1.445E+00 & 1.66  & 1.452E+00 & 1.66 \\
$2^{-3}$ & 3.204E$-02$ & 1.90  & 4.190E$-01$ & 1.79  & 4.224E$-01$ & 1.78 \\
$2^{-4}$ & 8.299E$-03$ & 1.95  & 1.130E$-01$ & 1.89  & 1.153E$-01$ & 1.87 \\
\noalign{\smallskip}\hline
\end{tabular}
\end{table}

\bibliographystyle{abbrv}
\bibliography{./refs}
\end{document}